\definecolor{purple}{rgb}{0.41, 0.16, 0.38}
\newcommand{\con}{\mathfrak c}
\newcommand{\om}{\omega}
\newcommand{\coh}{\mathbb C}
\newcommand{\poset}{\mathbb P}
\newcommand{\cB}{\mathcal B}
\newcommand{\cof}{{\rm cof}}
\DeclareMathOperator{\fin}{Fin}
\newcommand{\Pw}[1]{\mathcal{P}(#1)}
\newcommand{\pom}{\Pw{\om}}
\newcommand{\pofin}{\pom / \fin}
\newtheorem{thm}{Theorem}[section]
\newtheorem{defi}[thm]{Definition}
\newtheorem{lem}[thm]{Lemma}
\newtheorem{cor}[thm]{Corollary}
\newtheorem{prop}[thm]{Proposition}
\newtheorem{fact}[thm]{Fact}
\newtheorem{prob}[thm]{Problem}
\newtheorem*{claim}{Claim}
\theoremstyle{remark}
\newtheorem{remark}[thm]{Remark}
\newenvironment{why}[1][Proof]{\proof[#1]\mbox{}}{\endproof} 
\newcommand{\azero}{\aleph_0}
\begin{document}

\title{Measures and slaloms}
\author[Piotr Borodulin--Nadzieja]{Piotr Borodulin-Nadzieja}
\address{Instytut Matematyczny, Uniwersytet Wroc\l awski}
\email{pborod@math.uni.wroc.pl}
\thanks{The first author was partially supported by Polish National Science Center grant 2013/11/B/ST1/03596 (2014-2017).}
\author[Tanmay Inamdar]{Tanmay Inamdar}
\address{School of Mathematics, University of East Anglia}
\email{t.inamdar@uea.ac.uk}

\date{7 April 2016}

\subjclass[2010]{03E35,03E17,03E75,28A60}
\keywords{Suslin Hypothesis, Radon measure, measures on Boolean algebras, Martin's Axiom, cardinal coefficients, Suslinean spaces, random model, complemented subspaces}

\begin{abstract} 
	We examine measure-theoretic properties of spaces constructed using the technique of Todor\v{c}evi\'{c} from \cite[Theorem 8.4]{Todorcevic}. We show that the existence of strictly positive measures on such spaces depends on combinatorial properties of certain families of slaloms. As a corollary
	we get that if $\mathrm{add}(\mathcal{N}) = \mathrm{non}(\mathcal{M})$ then there is a non-separable space which supports a measure and which cannot be mapped continuously onto $[0,1]^{\omega_1}$. Also, without any additional axioms we prove that
	there is a non-separable growth of $\omega$ supporting a measure and that there is a compactification $L$ of $\omega$ with growth of such properties and such that the natural copy of $c_0$ is complemented in $C(L)$. Finally, we discuss examples of spaces not
	supporting measures but satisfying quite strong chain conditions.  Our main tool is a characterization due to Kamburelis (\cite{Kamburelis}) of Boolean algebras supporting measures in terms of their chain conditions in generic extensions by a measure algebra.\end{abstract}

\maketitle

\section{Introduction}

The study of the interplay between the countable chain condition and separability has been a constant source of many important results since the formulation of Suslin's Hypothesis. In general, it is hard to separate these properties assuming $\mathsf{MA}_{\omega_1}$ if we deal with compact spaces
which are in some sense topologically small. For example, under $\mathsf{MA}_{\omega_1}$ neither linearly ordered nor first-countable spaces can be ccc and non-separable. 

For many years the status of the following statement was unclear: \emph{every ccc compact space which cannot be mapped continuously onto $[0,1]^{\omega_1}$ is separable}. In
\cite[Remark 8.7]{Todorcevic}, this is referred to as `the ultimate form' of Suslin's Hypothesis. Quite
unexpectedly, in \cite{Todorcevic} Todor\v{c}evi\'{c} proved that it is inconsistent, i.e., there is a $\mathsf{ZFC}$ example of a ccc non-separable space which cannot be mapped continuously onto $[0,1]^{\omega_1}$. 

In \cite{Pbn-Plebanek} the authors consider a weakening of the above assertion: \emph{every compact space supporting a measure which cannot be mapped continuously onto $[0,1]^{\omega_1}$ is separable.} A space $K$ supports a measure if there is
measure $\mu$ on $K$ such that $\mu(U)>0$ for every nonempty open set $U\subseteq K$. This is clearly a stronger condition than ccc and still weaker than separability.  The results from \cite{Kunen-vanMill} imply that it does not hold if
$\mathrm{cov}(\mathcal{N}_{\omega_1})=\omega_1$ and \cite{Pbn-Plebanek} contains a counterexample under $\mathsf{MA}$. It is still not known if this statement is consistent with $\mathsf{ZFC}$.

In this context a natural question is whether the space from \cite{Todorcevic} mentioned above supports a measure. Consistently, it does not, for example if $\mathrm{add}(\mathcal{N}) = \omega_1 <\mathrm{cov}(\mathcal{N}_{\omega_1})$ (see Section \ref{meas}). However, examining 
Todor\v{c}evi\'{c}'s space more carefully, we found that its measure-theoretic properties depend on the way it is constructed (so it makes more sense to speak about \emph{Todor\v{c}evi\'{c} spaces}). We prove that under
$\mathrm{add}(\mathcal{N})=\mathrm{non}(\mathcal{M})$ we can modify Todor\v{c}evi\'{c}'s construction in
such a way that it supports a measure (Theorem \ref{small}).
It improves the result from \cite{Pbn-Plebanek} mentioned above.

Moreover, using similar techniques, we construct a $\mathsf{ZFC}$ example of a Boolean algebra supporting a measure which is not $\sigma$-centered and which can be embedded in $\mathcal{P}(\omega)/{\rm Fin}$ (Theorem \ref{growth}). Earlier, only consistent examples of such
spaces were known (see \cite{Drygier-Plebanek15}). Recall that if a Boolean algebra is $\sigma$-centered, then
it can be embedded in $P(\omega)/{\rm Fin}$, and by Parovi\v{c}enko's theorem (see \cite{Parovichenko}) the measure algebra (and in fact any Boolean algebra of the size of the continuum) can be embedded into $\mathcal{P}(\omega)/{\rm Fin}$ under $\mathsf{CH}$. 
On the other hand, a result of Dow and Hart (see \cite{Dow-Hart}) shows that under $\mathsf{OCA}$ the measure algebra cannot be embedded into $\mathcal{P}(\omega)/{\rm Fin}$, and in fact Selim has observed (see \cite{Selim}) that the same holds for any atomless Maharam algebra. Our result shows that a non-trivial piece of the measure algebra
can be embedded in $\mathcal{P}(\omega)/{\rm Fin}$ in $\mathsf{ZFC}$.

Recently, several authors (\cite{Castillo}, \cite{Drygier-Plebanek}) considered the problem of when $C(K)$, where $K$ is a compactification of $\omega$, contains a copy of $c_0$ which is complemented in $C(K)$. If the natural copy of $c_0$ is complemented in $C(K)$, then $K\setminus \omega$ supports a
measure, an observation which in \cite{Drygier-Plebanek} is attributed to Kubi{\' s}.
In \cite{Drygier-Plebanek} the authors prove that under $\mathsf{CH}$ there is a
compactification $K$ of $\omega$ such that $K\setminus \omega$ is non-separable, supports a measure, and the natural copy of $c_0$ in $C(K)$ is complemented. We prove that such a space exists in $\mathsf{ZFC}$ (Corollary \ref{c_0}).

Finally, we show in $\mathsf{ZFC}$ that Todor\v{c}evi\'{c}'s space can be constructed in such a way that it does not support a measure but satisfies quite strong chain conditions: $\sigma$-n-linkedness for every $n\in \omega$ and
Fremlin's property (*). Considering the completion of the Boolean algebra of clopen subsets of this space, we obtain a complete Boolean algebra which possesses these properties and which does not support a measure. This raises the natural, though
perhaps na\"{\i}ve, question of whether this algebra can provide another example of a non-measurable Maharam algebra. We show that forcing with this Boolean algebra adds a Cohen real and so it is not weakly distributive and thus cannot be a Maharam algebra.

Our main tool is Kamburelis' characterization of Boolean algebras supporting measures as Boolean algebras which can be made $\sigma$-centered by adding random reals (\cite{Kamburelis}). Thanks to this result and the nature of Todor\v{c}evi\'{c}'s construction, to check if
a Boolean algebra obtained in this way supports a measure it is enough to investigate destructibility of some families of slaloms by random forcing. Some of the theorems mentioned above can be proved directly by using Kelley's
characterization of Boolean algebras supporting measures (\cite{Kelley}). However, we decided to use the forcing language because this is how the results were obtained. Also, perhaps the facts concerning the destructibility of families of slaloms can be of independent interest.

\section{Notation and basic facts}

We use standard set theoretic notation. 
Let $\kappa$ be a cardinal number. Then by $\lambda_\kappa$ we denote the standard measure on $[0,1]^\kappa$ and by
$\mathbb{M}_\kappa$ the \emph{measure algebra of type $\kappa$}, that is, the Boolean algebra $\mathrm{Bor}[0,1]^\kappa/_{\lambda_\kappa=0}$. We write $\mathbb{M}$ instead of $\mathbb{M}_1$, which we simply call the \emph{measure algebra}.

By a \emph{real number} we will mean an element of Baire space, $\omega^\omega$, or an element of some $\prod_{n\in \omega} S_n$, where $S_n \subseteq \omega$, the exact choice of which shall be clear from the context. If $S \subseteq \omega \times \omega$, then $S(n)$ will denote the horizontal section $\{m \colon (n,m) \in S\}$.

Most of the spaces which will appear in this article will be constructed as Stone spaces of some Boolean algebras. 
We will treat Boolean algebras as algebras of sets, and so we will use ``$\cup$'' to denote conjunction, ``$\emptyset$'' to denote the zero element, ``$\subseteq$'' to denote the Boolean order, and so on. If $\mathfrak{A}$ is a Boolean algebra, then
$\mathfrak{A}^+ = \mathfrak{A}\setminus \{\emptyset\}$. A family $\mathcal{P}$ is a \emph{$\pi$-base} of a Boolean algebra $\mathfrak{A}$ if $\mathcal{P}\subseteq \mathfrak{A}^+$ and for each $A\in \mathfrak{A}^+$ there is $P\in \mathcal{P}$ such
that $P\subseteq A$. For a family $\mathcal{G}$ by ${\rm alg}(\mathcal{G})$ we denote the Boolean algebra generated by $\mathcal{G}$.

Recall that a Boolean algebra $\mathfrak{A}$ is $\sigma$-centered if $\mathfrak{A}^+=\bigcup_{n<\omega} \mathcal{C}_n$, where each $\mathcal{C}_n$ is centered (that is, each finite subset of $\mathcal{C}_n$ has non-empty intersection). A family $\mathcal{A}\subseteq \mathfrak{A}$ is
independent if for every collection of finite disjoint families $\mathcal{A}_0$, $\mathcal{A}_1 \subseteq \mathcal{A}$ we have
\[ \bigcap_{\mathcal{A}_0} A \cap \bigcap_{\mathcal{A}_1} A^c \ne 0. \]
A Boolean algebra is $\sigma$-centered if and only if its Stone space is separable and it contains an uncountable independent family if and only if the Stone space maps continuously onto $[0,1]^{\omega_1}$.

Recall that a Boolean algebra $\mathfrak{A}$ has the countable chain condition, abbreviated to `ccc', if any collection of pairwise disjoint elements from $\mathfrak{A}^+$ is at most countable.

If $\mathcal{I}$ is an ideal of subsets of $K$, then
\[ \mathrm{add}(\mathcal{I}) = \min\{|\mathcal{A}|\colon \mathcal{A}\subseteq \mathcal{I}, \ \bigcup \mathcal{A}\notin \mathcal{I}\}, \]
\[ \mathrm{non}(\mathcal{I}) = \min\{|X|\colon X\subseteq K, \ X\notin \mathcal{I}\}, \]
\[ \mathrm{cov}(\mathcal{I}) = \min\{|\mathcal{A}|\colon \mathcal{A}\subseteq \mathcal{I}, \ \bigcup \mathcal{A} = K\}. \]
By $\mathcal{N}$ we will mean the $\sigma$-ideal of Lebesgue null sets, by $\mathcal{M}$, the $\sigma$-ideal of meager sets, and by $\mathcal{N}_{\omega_1}$ the $\sigma$-ideal of $\lambda_{\omega_1}$-null sets. By ${\rm Fin}$ we will denote the ideal of finite subsets (of a set which should be clear from the context). We shall also need the standard fact that $\mathrm{add}(\mathcal{N})$ is an uncountable regular cardinal.

The \emph{bounding number} is defined by
\[ \mathfrak{b} = \min\{|\mathcal{F}| \colon \mathcal{F} \subseteq \omega^\omega, \ \forall g\in \omega^\omega \ \exists f\in \mathcal{F} \ f\nleq^* g\}. \]
Here $f \leq^* g$ means $f(n)\leq g(n)$ for all but finitely many $n$'s. Similarly, $A\subseteq^* B$ will denote the fact that $A\setminus B$ is finite. We shall need the standard fact that $\mathrm{add}(\mathcal{N}) \leq \mathfrak{b}$.

By a measure on a Boolean algebra we understand a \emph{finitely-additive} measure. Note that every such measure can be uniquely extended to a $\sigma$-additive Radon measure on the Stone space. 

Recall that a space $K$ has \emph{countable $\pi$-character} if each $x\in K$ has a local $\pi$-base (i.e., a family $\mathcal{U}_x$ of nonempty open sets such that each neighbourhood of $x$ contains an element of $\mathcal{U}_x$) which is countable. A space $K$
is \emph{scatteredly-fibered} if there is a continuous function $f\colon K\to M$, where $M$ is a metric space, such that each fiber $f^{-1}[x]$ is a scattered space (i.e. it cannot be mapped continuously onto $[0,1]^\omega$). Note that no scatteredly-fibered space
can be mapped continuously onto $[0,1]^{\omega_1}$. Otherwise, one of the fibers could be mapped continuously onto $[0,1]^{\omega_1}$ (by Tkachenko's theorem, see \cite{Tkachenko}). Similarly, one can define \emph{linearly-fibered} spaces. 

A \emph{compactification} of a space $X$ is a compact space $K\supseteq X$ such that $X$ is dense in $K$. The space $K\setminus X$ is called a \emph{growth} of $X$. We will consider compactifications of $\omega$ (with the discrete topology). If
$\mathfrak{A}$ is a subalgebra of $\mathcal{P}(\omega)$, then its Stone space is a compactification of $\omega$. Similarly, Stone spaces of subalgebras of $\pofin$ are growths of $\omega$. 

We are going to violate notation in many different ways. In particular, we will not always distinguish between Borel sets and the elements of $\mathbb{M}$. Also, we will not distinguish in notation between elements of Boolean algebras and clopen
subsets of its Stone space or between measures on Boolean algebras and its extensions to the Stone spaces. We hope this will not cause any confusion.

For proofs of the standard facts of Stone duality and Boolean algebras, see \cite{BAhandbook}; for set theory, see \cite{Jech}; for set theory of the reals, see \cite{Bartoszynski}; for Banach space theory, see \cite{Kalton06}.

\section{Todor\v{c}evi\'{c}'s construction}

In this short section we will explain some details of the construction from \cite[Theorem 8.4]{Todorcevic}. 

For $g \in \omega^ \omega$ let $\mathcal{S}_g$ be the set of $g$-\emph{slaloms}, i.e.,
\[ \mathcal{S}_g = \{S\subseteq \omega\times\omega \colon |S(n)|< g(n)\}. \]
Let $h\in \omega^\omega$ be given by $h(n)=2^n$. We write $\mathcal{S}$ for $\mathcal{S}_h$ (any increasing function $g$ such that $\sum_n \frac{1}{g(n)}$ converges would be equally good). Similarly, \emph{a slalom} will mean an $h$-slalom.

Let $\Omega = \{(S,n)\colon n\in\omega, \ S\in \mathcal{S}, \ S\subseteq (n\times 2^n)\}$.
For each $A\subseteq \omega\times\omega$ define 
\[ T_A = \{(T,n)\in \Omega \colon A\cap (n\times 2^n) \subseteq T\}. \]
For $(S,n)\in \Omega$ let
\[ T_{(S,n)} = \{(T,m)\in \Omega \colon m\geq n, T\cap (n\times 2^n) = S\}. \]

It will be convenient to collect some simple observations concerning $T_A$'s. 

\begin{fact} For each $A$, $B\in \mathcal{S}$ we have
	\begin{enumerate}
		\item $S\in \mathcal{S}$ if and only if $T_S$ is infinite,
		\item $T_{(A\cup B)} = T_A \cap T_B$,
		\item if $A\subseteq B$, then $T_B \subseteq T_A$,
	\end{enumerate}
\end{fact}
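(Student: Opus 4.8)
The plan is to read off all three statements directly from the definitions of $\Omega$ and of $T_A$. Items (2) and (3) are purely formal: since intersection with the ``box'' $n\times 2^n$ distributes over unions, we have $(A\cup B)\cap(n\times 2^n) = \big(A\cap(n\times 2^n)\big)\cup\big(B\cap(n\times 2^n)\big)$, and a union of two sets is contained in some $T$ exactly when both summands are; feeding this into the definition of $T_{(\cdot)}$ gives $T_{A\cup B}=T_A\cap T_B$, which is (2). For (3), from $A\subseteq B$ one gets $A\cap(n\times 2^n)\subseteq B\cap(n\times 2^n)$, so every pair $(T,n)$ witnessing $(T,n)\in T_B$ also witnesses $(T,n)\in T_A$; alternatively, $A\subseteq B$ gives $A\cup B=B$, so (3) is immediate from (2).

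The only statement with any content is (1) (here $S$ should be allowed to range over arbitrary subsets of $\omega\times\omega$, since the point is precisely to characterise when $S\in\mathcal{S}$). For the direction ``$S\in\mathcal{S}\Rightarrow T_S$ infinite'': if $S$ is a slalom, then for every $n$ the truncation $S\cap(n\times 2^n)$ is again a slalom contained in $n\times 2^n$ --- truncating a horizontal section can only make it smaller, so $|(S\cap(n\times 2^n))(k)|\le |S(k)| < 2^k$ --- hence $(S\cap(n\times 2^n),\,n)\in\Omega$, and trivially $(S\cap(n\times 2^n),\,n)\in T_S$. These elements are pairwise distinct as $n$ ranges over $\omega$, so $T_S$ is infinite. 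For the converse, suppose $S\notin\mathcal{S}$ and fix $n_0$ with $|S(n_0)|\ge 2^{n_0}$. For all sufficiently large $n$ we have $|S(n_0)\cap 2^n|\ge 2^{n_0}$, so any $(T,n)\in T_S$ with $n$ that large would satisfy $T(n_0)\supseteq S(n_0)\cap 2^n$ and hence $|T(n_0)|\ge 2^{n_0}$, contradicting $T\in\mathcal{S}$. Thus $T_S$ is contained in $\{(T,n)\in\Omega: n\le n_1\}$ for some $n_1$; since each box $n\times 2^n$ is finite, for each fixed $n$ there are only finitely many $T$ with $(T,n)\in\Omega$, so $T_S$ is finite.

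I do not expect any real obstacle here; the one point that needs a moment's care is that a slalom $S$ lives in all of $\omega\times\omega$ and not inside any particular box, which is exactly why, in the converse of (1), one has to pass to large enough $n$ so that at least $2^{n_0}$ elements of $S(n_0)$ lie below $2^n$.
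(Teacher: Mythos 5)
Your proof is correct and is exactly the routine unwinding of the definitions that the paper intends: the Fact is stated there without proof as a collection of "simple observations," so there is nothing to diverge from. You rightly flag the two points that deserve a word --- that in (1) the set $S$ must range over arbitrary subsets of $\omega\times\omega$ for the equivalence to have content, and that in the converse direction one must take $n$ large enough (in particular $n>n_0$) so that at least $2^{n_0}$ elements of $S(n_0)$ fall inside the box $n\times 2^n$.
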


If $\mathcal{A}\subseteq \mathcal{P}(\omega\times \omega)$, then let $\mathfrak{T}_\mathcal{A}$ be the subalgebra of $\mathcal{P}(\Omega)$ generated by 
\[\{T_A \colon A\in \mathcal{A}\} \cup \{T_{(S,n)}\colon (S,n)\in\Omega\}.\]

Finally, let $K_\mathcal{A}$ be the Stone space of $\mathfrak{T}_\mathcal{A}/{\rm Fin}$. 

We say that a family $\mathcal{F}\subseteq \omega^\omega$ is \emph{localized} by $\mathcal{S}_g$ if there is $S\in \mathcal{S}_g$ such that $f\subseteq^* S$ (that is, for all but finitely many $n \in\omega$ we have that $f(n) \in S(n)$) for every
$f\in \mathcal{F}$. Similarly, a family $\mathcal{A} \subseteq \mathcal{S}_g$ is \emph{$\subseteq^*$-bounded}, or simply, \emph{bounded}, if there is a $S \in \mathcal{S}_g$ such that $A \subseteq^* S$ (that is, for all but finitely many $n \in\omega$ we have that $A(n) \subseteq S(n)$) for every $A \in \mathcal{A}$.

\begin{thm}\cite[Theorem 2.3.9]{Bartoszynski}  \label{Bartoszynski}
	Let $g\in \omega^\omega$ be such that $\lim_n g(n)=\infty$. Then
\[ \mathrm{add}(\mathcal{N}) = \min\{|\mathcal{F}|\colon \mathcal{F}\subseteq \omega^\omega, \mathcal{F}\mbox{ is not localized by }\mathcal{S}_g\}. \]
\end{thm}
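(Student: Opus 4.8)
The statement is Bartoszyński's characterization of $\mathrm{add}(\mathcal{N})$ as a localization number, so I would not expect a short self-contained argument; my plan is to isolate the elementary combinatorial part — that the right-hand side does not depend on $g$ — and then invoke (or reconstruct) the measure-theoretic core for one convenient $g$. Write $\mathfrak{L}(g)$ for the cardinal on the right-hand side.

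\textbf{The right-hand side is independent of $g$.} If $g\leq g'$ pointwise then $\mathcal{S}_g\subseteq\mathcal{S}_{g'}$, so a family localized by $\mathcal{S}_g$ is localized by $\mathcal{S}_{g'}$; hence $\mathfrak{L}(g)\leq\mathfrak{L}(g')$. The nontrivial ingredient is a compression lemma: if $g'\leq g$ and $\lim_n g'(n)=\infty$, then $\mathfrak{L}(g)\leq\mathfrak{L}(g')$. To prove it, start from a family $\mathcal{G}$, $|\mathcal{G}|=\mathfrak{L}(g')$, not localized by $\mathcal{S}_{g'}$, and split $\omega$ into consecutive intervals $I_k=[n_k,n_{k+1})$ with $g'(m)\geq g(k)$ for all $m\in I_k$ (possible since $g'\to\infty$). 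Fixing bijections $e_k\colon\prod_{m\in I_k}\omega\to\omega$, recode each $f\in\mathcal{G}$ to $f^*(k)=e_k(f\har I_k)$. If some $S\in\mathcal{S}_g$ localized $\{f^*\colon f\in\mathcal{G}\}$, then setting $S'(m)=\{v(m)\colon v\in e_k^{-1}[S(k)]\}$ for $m\in I_k$ defines $S'$ with $|S'(m)|\leq|S(k)|<g(k)\leq g'(m)$, i.e. $S'\in\mathcal{S}_{g'}$, and $S'$ would localize $\mathcal{G}$ — a contradiction; since $|\{f^*\colon f\in\mathcal{G}\}|\leq|\mathcal{G}|$, this gives $\mathfrak{L}(g)\leq\mathfrak{L}(g')$. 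Applying monotonicity and the lemma to $g$, $g'$ and $\min(g,g')$ shows $\mathfrak{L}$ is constant on $\{g\colon\lim_n g(n)=\infty\}$.

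\textbf{Identification with $\mathrm{add}(\mathcal{N})$.} It now suffices to prove $\mathfrak{L}(g_0)=\mathrm{add}(\mathcal{N})$ for one convenient $g_0\to\infty$, and we are free to re-block. For ``$\mathfrak{L}(g_0)\geq\mathrm{add}(\mathcal{N})$'', take $\mathcal{F}$ with $|\mathcal{F}|<\mathrm{add}(\mathcal{N})$; since $\mathrm{add}(\mathcal{N})\leq\mathfrak{b}$ (quoted in Section~2), $\mathcal{F}$ is dominated, so after a finite modification of each function and a re-blocking we may assume $\mathcal{F}\subseteq\prod_n X_n$ for finite sets $X_n$ with suitable growth. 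Associating to each $f\in\mathcal{F}$ a null $G_\delta$ set $N_f$ in an appropriate product probability space, in such a way that ``$S\in\mathcal{S}_{g_0}$ localizes $f$'' corresponds to ``$N_f$ is contained in the null set $B_S$ attached to $S$'', and using that the union of $<\mathrm{add}(\mathcal{N})$ null sets is null (and that $\mathrm{add}(\mathcal{N})$ is a regular uncountable cardinal, which is occasionally convenient for the bookkeeping), one extracts a single $S\in\mathcal{S}_{g_0}$ localizing all of $\mathcal{F}$. For ``$\mathfrak{L}(g_0)\leq\mathrm{add}(\mathcal{N})$'', fix null sets $N_\alpha$ ($\alpha<\mathrm{add}(\mathcal{N})$) with $\bigcup_\alpha N_\alpha\notin\mathcal{N}$, code each $N_\alpha$ by a function $f_\alpha$ via the same dictionary, and observe that a single slalom from $\mathcal{S}_{g_0}$ localizing all the $f_\alpha$ would produce a single null set covering every $N_\alpha$, hence covering $\bigcup_\alpha N_\alpha$ — a contradiction.

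\textbf{Main obstacle.} The compression lemma and the reduction to bounded families are routine. The real work is the dictionary used twice above: the precise correspondence between a null $G_\delta$ set — equivalently, a sequence of finite unions of basic clopen sets whose total measures are summable — and a slalom whose widths are controlled by a prescribed $g_0\to\infty$, arranged so that localization of a function matches containment of the associated null set and passing to a ``common'' slalom matches passing to a ``common'' null set, in both directions. Setting up the block structure so that slaloms of width below $g_0$ correspond to sets small enough for summability, and checking the correspondence is onto enough, is exactly the technical heart of Bartoszyński's Theorem~2.3.9; I would either carry this out explicitly in that block coding or simply cite it, the remainder being bookkeeping.
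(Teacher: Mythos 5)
The paper offers no proof of this statement: it is quoted verbatim as \cite[Theorem 2.3.9]{Bartoszynski}, so there is no internal argument to compare yours against. Your treatment is nonetheless sound and, in fact, more informative than the paper's bare citation. The part you do carry out in full --- that $\mathfrak{L}(g)$ is monotone in $g$ and that the compression/re-blocking argument collapses all $g$ with $\lim_n g(n)=\infty$ to a single cardinal --- is correct: the intervals $I_k$ exist because $g'\to\infty$, the bijections $e_k$ give $|e_k^{-1}[S(k)]|=|S(k)|<g(k)\leq g'(m)$, and the finite initial segment is absorbed by the $\subseteq^*$ in the definition of localization. The identification of that common cardinal with $\mathrm{add}(\mathcal{N})$ is only sketched, and you say so explicitly; the dictionary between slaloms and summable sequences of clopen sets (and the verification that it is cofinal in $\mathcal{N}$ in both directions) is indeed the technical heart of Bartoszy\'nski's Theorem 2.3.9, and deferring it to the citation is exactly what the paper itself does. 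The one step worth flagging as not purely routine is the ``onto enough'' direction you mention at the end: for $\mathfrak{L}(g_0)\leq\mathrm{add}(\mathcal{N})$ one must know that every null set is contained in one arising from the coding, which is where the $G_\delta$/outer-regularity argument enters; you have correctly identified this as the load-bearing point rather than glossed over it. No gap beyond the openly acknowledged reliance on the cited source.
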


Let 
\[ \mathcal{Z} = \{S\subseteq \omega\times \omega\colon S\in \mathcal{S}\mbox{ and } \lim_n \frac{1}{2^n}|S(n)| = 0 \}. \]

In \cite{Kunen-Fremlin} a subfamily of $\omega^\omega$ which cannot be localized by $\mathcal{S}$ was used to construct a family of elements of $\mathcal{Z}$ which is not $\subseteq^*$-bounded in $\mathcal{S}$. Note that in \cite{Kunen-Fremlin} and in
\cite{Todorcevic} the authors considered $\mathcal{S}_g$ for $g(n)=n$ instead of $\mathcal{S}$ but it does not make any difference for their results.

\begin{thm}\cite[Theorem 4]{Kunen-Fremlin}\label{kunen-fremlin} 
	There is a $\subseteq^*$-chain $\{A_\alpha\colon \alpha<\mathrm{add}(\mathcal{N})\} \subseteq \mathcal{Z}$ such that for every $S\in\mathcal{S}$ there is $\alpha<\mathrm{add}(\mathcal{N})$ such that $A_\alpha \nsubseteq^* S$.
\end{thm}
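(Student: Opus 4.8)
The goal is to build a $\subseteq^*$-increasing chain $\{A_\alpha : \alpha < \mathrm{add}(\mathcal N)\}$ inside $\mathcal Z$ that is unbounded in $\mathcal S$, i.e., no single $S \in \mathcal S$ $\subseteq^*$-dominates all of them. The natural strategy is to transfer the known unlocalizability phenomenon for $\omega^\omega$ (Theorem \ref{Bartoszynski}) into the slalom setting. First I would fix, using Theorem \ref{Bartoszynski} applied to a suitable $g$ with $\lim_n g(n) = \infty$, a family $\{f_\alpha : \alpha < \mathrm{add}(\mathcal N)\} \subseteq \omega^\omega$ of minimal size that is not localized by $\mathcal S_g$. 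Since $\mathrm{add}(\mathcal N)$ is regular (a fact recalled in the excerpt), I would like to assume this family is already $\leq^*$-increasing; if a direct argument does not give monotonicity, one passes to an unbounded subfamily of size $\mathrm{add}(\mathcal N)$ using $\mathrm{add}(\mathcal N) \leq \mathfrak b$, or reorganizes the family — this bookkeeping is routine but must be done carefully so that the chain condition survives.

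**From functions to slaloms.** The key construction is to turn each $f_\alpha$ into a slalom $A_\alpha \in \mathcal Z$ by letting $A_\alpha(n)$ be an interval (or some canonically chosen set) of integers below $2^n$ whose size grows but stays $o(2^n)$, and which is arranged to ``encode'' the values $f_\beta(n)$ for $\beta \leq \alpha$ up to some block structure. Concretely, one partitions $\omega$ into consecutive intervals $I_k$, reserves on each block enough room so that $|A_\alpha(n)|/2^n \to 0$ (forcing membership in $\mathcal Z$), and ensures $A_\alpha \subseteq^* A_{\alpha'}$ whenever $\alpha \leq \alpha'$ by making the encoding monotone — this is exactly where the $\leq^*$-increasing arrangement of the $f_\alpha$'s is used. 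The delicate point is the size bound: the whole content of the family $\mathcal Z$ versus $\mathcal S$ (as opposed to the easier statement about $\mathcal S_g$ for slowly growing $g$) is that we need room to be simultaneously a genuine slalom ($|S(n)| < 2^n$), asymptotically negligible ($|S(n)| = o(2^n)$), and still rich enough to code an unlocalizable family — a standard trick is to spread the coding across a fast-growing sequence of blocks so that on block $I_k$ only a $2^{-k}$ fraction of the available width is used.

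**The unboundedness argument.** For the crucial property, suppose toward a contradiction that some $S \in \mathcal S$ satisfies $A_\alpha \subseteq^* S$ for every $\alpha < \mathrm{add}(\mathcal N)$. Then from $S$ one reads off, via the block decoding, a slalom $\widetilde S \in \mathcal S_g$ (on the index set given by the blocks) such that $f_\alpha \subseteq^* \widetilde S$ for all $\alpha$, contradicting the choice of $\{f_\alpha\}$ as not localized by $\mathcal S_g$. The main obstacle I anticipate is precisely calibrating the encoding so that this decoding step works: one must guarantee that $|S(n)| < 2^n$ for the fixed slalom bound $h(n) = 2^n$ translates into the right width bound $|\widetilde S(k)| < g(k)$ on blocks — this forces the block lengths and the coding density to be chosen in tandem, and it is the only place where a nontrivial counting estimate is needed. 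Everything else (verifying the $\subseteq^*$-chain property, verifying $A_\alpha \in \mathcal Z$) is a direct check against the definitions.
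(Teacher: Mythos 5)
There is a genuine gap, and it sits at the heart of the theorem: the construction of the chain itself. You propose to build $A_\alpha$ directly from the functions $f_\beta$, $\beta \leq \alpha$, by a monotone block-encoding, relying on a $\leq^*$-increasing arrangement of the $f_\beta$'s. This cannot work as described. Already at stage $\omega$ the set $\{f_\beta(n)\colon \beta \leq \alpha\}$ can be all of $2^n$ for every $n$, so no union-like or pointwise-envelope encoding produces an $A_\alpha \in \mathcal{Z}$ almost-containing every $f_\beta$; and at stages of uncountable cofinality no diagonalization over the predecessors is available. Moreover $\leq^*$-monotonicity of the $f_\beta$'s neither holds automatically (an unlocalized family cannot in general be reordered or thinned to a $\leq^*$-increasing one of the same size without risking that the subfamily becomes localized) nor helps if it did, since localization is not monotone under $\leq^*$. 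Finally, ``canonical'' choices such as intervals $A_\alpha(n)=[0,c_\alpha(n))$ are ruled out outright: the single slalom $S$ with $S(n)=2^n\setminus\{2^n-1\}$ would then $\subseteq^*$-bound the entire chain.

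The missing idea --- which is the actual content of the Kunen--Fremlin argument and is reproduced in the paper as the proof of Theorem \ref{kunen-fremlin2} --- is a transfinite recursion in which, at each stage $\beta<\mathrm{add}(\mathcal{N})$, the previously built slaloms $A_\alpha$, $\alpha<\beta$, are amalgamated into a \emph{single} $A\in\mathcal{Z}$ with $A_\alpha\subseteq^* A$ for all $\alpha<\beta$, after which one sets $A_\beta$ to be a suitable tail of $A\cup f_\beta$. The amalgamation uses $\mathrm{add}(\mathcal{N})$ twice: first $\beta<\mathrm{add}(\mathcal{N})\leq\mathfrak{b}$ yields a function $g$ dominating the rates of decay of the $A_\alpha$'s, and then Theorem \ref{Bartoszynski}, applied to the countable space $[\omega\times\omega]^{<\omega}$ of blocks $A_\alpha\cap([g(n),g(n+1))\times\omega)$, yields a slalom of blocks whose union is the required $A$. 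Your final ``decoding'' step is fine and matches the paper --- since $f_\alpha\subseteq^* A_\alpha$, any $S$ bounding the chain would localize the unlocalized family, and no block-counting estimate is needed --- but you have misplaced the difficulty: the real work, where $\mathrm{add}(\mathcal{N})$ enters the recursion, is the stage-by-stage amalgamation, and it is absent from your outline.
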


Let $\{A_\alpha\colon \alpha<\mathrm{add}(\mathcal{N})\}$ be a family given by Theorem \ref{kunen-fremlin}. 
Denote \[ \mathcal{A} = \{A\in \mathcal{S}\colon A=^* A_\alpha\mbox{ for some }\alpha<\mathrm{add}(\mathcal{N})\}.\]  
\begin{thm}\label{todor} \cite[Theorem 8.4]{Todorcevic} $K_\mathcal{A}$ has the following properties:
	\begin{enumerate}
		\item it is homeomorphic to a growth of $\omega$,
		\item it is non-separable, 
		\item it is ccc,
		\item it is linearly-fibered and scatteredly-fibered,
		\item it has countable $\pi$-character.
	\end{enumerate}	
	\end{thm}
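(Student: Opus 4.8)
The plan is to analyze each of the five properties of $K_{\mathcal{A}}$ by unwinding the combinatorics of the $T_A$'s and $T_{(S,n)}$'s, using the Kunen--Fremlin chain and the Fact listed above.

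For property (1), I would show that $\mathfrak{T}_{\mathcal{A}}/\mathrm{Fin}$ is a subalgebra of $\mathcal{P}(\Omega)/\mathrm{Fin}$ in a nontrivial way, i.e. that no nonzero element of $\mathfrak{T}_{\mathcal{A}}$ is finite; since $\Omega$ is countable, its Stone space is then a growth of (a copy of) $\omega$. The key point is that every nonzero element of $\mathfrak{T}_{\mathcal{A}}$ contains a set of the form $T_{(S,n)} \setminus (T_{A_1}\cup \dots)$ type; one checks using Fact items (1)--(3) and the fact that each $A \in \mathcal{A}$ lies in $\mathcal{Z}$ (so $\frac{1}{2^n}|A(n)| \to 0$, hence $A(n) \subsetneq n \times 2^n$ cofinitely, leaving room to extend) that such a basic intersection, if formally nonzero, is actually infinite. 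For property (5), countable $\pi$-character, the natural candidate local $\pi$-base at a point of $K_{\mathcal{A}}$ is $\{[T_{(S,n)}] : (S,n) \in \Omega\}$ restricted below the point: since $\Omega$ is countable this family is countable, and one verifies it is a $\pi$-base by the same normal-form analysis showing any nonempty clopen set contains some $T_{(S,n)}$ modulo finite.

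For property (3), ccc, I would argue that $\mathfrak{T}_{\mathcal{A}}/\mathrm{Fin}$ is in fact $\sigma$-$n$-linked for all $n$, or at least ccc, by showing that any uncountable family of nonzero elements refines to an uncountable family of sets of the form $T_{(S,n)} \cap \bigcap_{i} T_{A_{\alpha_i}} \setminus \bigcup_j T_{A_{\beta_j}}$ with the same $(S,n)$ and, after further thinning, compatible $\subseteq^*$-structure among the $\alpha_i$'s; because $\{A_\alpha\}$ is a $\subseteq^*$-\emph{chain}, finitely many of the positive generators $T_{A_{\alpha_i}}$ always have the largest-index one contained in the rest, which keeps intersections infinite. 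Property (2), non-separability, is the dual of a key feature of the chain: if $K_{\mathcal{A}}$ were separable then $\mathfrak{T}_{\mathcal{A}}/\mathrm{Fin}$ would be $\sigma$-centered, which would give (after an averaging/pigeonhole argument on a countable centering family) a single slalom $S \in \mathcal{S}$ with $A_\alpha \subseteq^* S$ for all $\alpha$, contradicting the defining property of the Kunen--Fremlin chain from Theorem \ref{kunen-fremlin}.

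The main obstacle, and the heart of the argument, is property (4): that $K_{\mathcal{A}}$ is linearly-fibered and scatteredly-fibered. The idea is to build a continuous map $f \colon K_{\mathcal{A}} \to M$ into a metric (in fact linearly ordered) space whose fibers are small. The natural $M$ should be the Stone space of the countable subalgebra generated by $\{T_{(S,n)} : (S,n)\in\Omega\}$ alone (which is metrizable, being countably generated), with $f$ induced by inclusion of algebras; a fiber over a point of $M$ then corresponds to an ultrafilter on that subalgebra, and the relative algebra on the fiber is generated over it by the $T_{A_\alpha}$'s, which form a $\subseteq^*$-chain and hence generate a (countably-)linearly-ordered-like structure on each fiber, forcing each fiber to be scattered (a chain algebra has scattered Stone space) and linearly orderable. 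The delicate part is verifying that $f$ really is well-defined and continuous and that the fiber algebras are genuinely generated by a \emph{chain}, i.e. that passing to a fiber does not introduce incompatibilities that spoil the linear order — this requires carefully tracking how the $T_{A_\alpha}$ interact with a fixed ultrafilter on the $T_{(S,n)}$-subalgebra, again leaning on Fact (2),(3) and on $\mathcal{A} \subseteq \mathcal{Z}$.
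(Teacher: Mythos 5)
Your argument for item (2) is essentially the one the paper gives: regularity of $\mathrm{add}(\mathcal{N})$ forces a cofinal set of the $T_{A_\alpha}$ into one centered piece, and centeredness of that piece makes $\bigcup_\alpha A_\alpha$ a slalom bounding the whole chain, contradicting Theorem \ref{kunen-fremlin}. For the other four items the paper simply cites Todor\v{c}evi\'{c}, so your sketches must stand on their own, and two of them do not. The clearest failure is item (5): no $[T_{(S,n)}]$ is ever contained in $[T_A]$ for $A\in\mathcal{A}$, so your proposed family cannot be a local $\pi$-base at any point whose ultrafilter contains some $[T_A]$. Concretely, $(S,m)\in T_{(S,n)}$ for every $m\geq n$ (since $S\subseteq n\times 2^n\subseteq m\times 2^m$), while $A$, being infinite with columns of size $<2^j$, has a point $(j,a)$ with $j\geq n$; then $(j,a)\in A\cap(m\times 2^m)$ but $(j,a)\notin S$ for all large $m$, so $T_{(S,n)}\setminus T_A$ is infinite. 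Any correct local $\pi$-base must incorporate the $T_A$'s, and choosing countably many of them per point is precisely the nontrivial content of Todor\v{c}evi\'{c}'s claim.

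Item (3) also rests on a false monotonicity. First, $A\subseteq^* B$ does \emph{not} give $T_B\subseteq^* T_A$: if $(j,a)\in A\setminus B$, then $(B\cap(m\times 2^m),m)\in T_B\setminus T_A$ for every sufficiently large $m$, so only literal inclusion $A\subseteq B$ transfers to $T_B\subseteq T_A$. Second, $\mathcal{A}$ is by definition closed under finite modifications, so it is not literally a chain, and the union of two $=^*$-equivalent members of $\mathcal{S}$ can overflow a small column and leave $\mathcal{S}$, making $T_A\cap T_{A'}=T_{A\cup A'}$ finite, i.e., the two generators incompatible in $\mathfrak{T}_\mathcal{A}/{\rm Fin}$. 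Thus ``the largest-index positive generator is contained in the rest'' is unavailable, and the chain structure alone does not keep intersections infinite. The argument that actually works (for ccc and indeed $\sigma$-$n$-linkedness, cf.\ the proof of Theorem \ref{nomeasure}) uses $\mathcal{A}\subseteq\mathcal{Z}$: partition $\mathcal{A}$ by the pair $\bigl(k_A,\,A\cap(k_A\times\omega)\bigr)$ where $|A(m)|/2^m<1/n$ for $m\geq k_A$, so that unions of $n$ members of one cell remain in $\mathcal{S}$. The same two phenomena (finite modifications, and $\subseteq^*$ versus $\subseteq$) are exactly the ``delicate part'' you flag but do not resolve in item (4): linearity of the traces of the $T_A$'s on a fiber is only recovered after cutting down by a $T_{(S,n)}$ that absorbs the finite discrepancies. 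Your treatment of items (1) and (2) is fine.
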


\begin{proof} 
	For the proof see \cite[Theorem 8.4]{Todorcevic}. We will only present a slightly different proof that $\mathfrak{T}_\mathcal{A}/{\rm Fin}$ is not $\sigma$-centered (i.e. that $K_\mathcal{A}$ is not separable).

	Suppose for the contradiction that $\mathfrak{T}_\mathcal{A}/{\rm Fin} = \bigcup_{n<\omega} \mathcal{C}_n$ and each $\mathcal{C}_n$ is centered. Then, since $\mathrm{add}(\mathcal{N})$ is a regular uncountable cardinal, there is an $n$ such that \[\{\alpha\colon \exists A\in \mathcal{C}_n, A=^* A_\alpha\}\mbox{ is cofinal in }\mathrm{add}(\mathcal{N}).\] For
	simplicity we will just assume that \[ \{T_{A_\alpha}\colon \alpha<\mathrm{add}(\mathcal{N})\}/{\rm Fin} \subseteq \mathcal{C}_n.\]  
	Of course $\bigcup_\alpha A_\alpha \notin \mathcal{S}$ and so there is $m\in \omega$ such that $|\bigcup_\alpha A_\alpha(m)|\geq 2^m$. Enumerate $\bigcup_\alpha A_\alpha(m) = \{k_0, k_1, \dots\}$ and for each $i$ let $\alpha_i$ be such that $k_i \in
	A_{\alpha_i}(m)$. Then 
	\[ \bigcap_{i\leq 2^m} T_{A_{\alpha_i}} = \{(T,n) \in \Omega \colon (\bigcup_{i \leq 2^m}A_{\alpha_i}) \cap (n \times 2^n) \subseteq T\} \]
	does not contain any $(T,n)$ such that $n >m$, and hence is finite, a contradiction
\end{proof}

It will be convenient to make the following observation available.

\begin{remark}\label{T^*}
	Denote by $\mathfrak{T}^*_\mathcal{A}$ the Boolean subalgebra of $\mathfrak{T}_\mathcal{A}$ generated only by $\{T_A\colon A\in \mathcal{A}\}$. The above proof shows that $\mathfrak{T}^*_\mathcal{A}/{\rm Fin}$ is not $\sigma$-centered.
\end{remark}

\section{Random destructible families of slaloms}\label{destruct}

The main ingredient of the construction from Theorem \ref{todor} is a family of slaloms. In this section we will investigate combinatorial properties of certain families of slaloms which in Section \ref{applications} will be translated to properties
of resulting spaces.

Let \[ \mathcal{I} = \{S\subseteq \omega \times \omega\colon S(n)\subseteq 2^n \mbox{ for each }n\mbox{ and } \sum_n \frac{1}{2^n}|S(n)| < \infty \}. \]
Notice that if $f\colon \{(n,i)\colon i< 2^n, n\in\omega\} \to \omega$ is the natural enumeration function (sending $\{n\}\times 2^n$ to $[2^n, 2^{n+1})$ for each $n$), then $I\in \mathcal{I}$ if and only if $f[I]\in \mathcal{I}_{1/n}$, where
\[ \mathcal{I}_{1/n} = \{A\subseteq \omega\colon \sum_n \frac{|A\cap [2^n, 2^{n+1})|}{2^n} < \infty\}, \]
i.e., $\mathcal{I}_{1/n}$ is the classical summable ideal on $\omega$ (see for example \cite{Farah}).
Let 
\[ \mathcal{W} = \mathcal{I} \cap \mathcal{S}. \] 
In other words, $\mathcal{W}$ consists of elements of $\mathcal{I}$ which miss at least one point of $\{n\}\times 2^n$ for each $n$.

For $g\in \omega^\omega$ equip the space $\mathcal{X}_g = \prod g(n)$ with the product topology (so that $\mathcal{X}_g$ is homeomorphic to the Cantor set). Let $\lambda$ be the standard measure on $\mathcal{X}_g$, so in particular\[ \lambda(\{f\in
\mathcal{X}_g\colon f(n)=i\}) = \frac{1}{g(n)}\] if $i< g(n)$. 
Recall that $h \in \omega^\omega$ is given by $h(n)= 2^n$. Let $\mathcal{X} = \mathcal{X}_h$ and let $A_g = \{f\in \mathcal{X}\colon \exists^\infty n \ f(n)=g(n)\}$. 

Before we proceed, we make the simple observation that if $f \in \mathcal{X}$, then \[ \{(n,f(n))\colon  n\in \omega, n >0\}\in \mathcal{W},\] and also, if $S \in \mathcal{S}$ is such that $S(n) \subseteq 2^n$ for every $n$, then there is a $f \in \mathcal{X}$ and a $T \in \mathcal{S}$ such that $S \subseteq T$ and $T(n)= 2^n \setminus \{f(n)\}$ for every $n$. We shall use these observations several times in what follows.

\begin{prop}[folklore] \label{non}
	There is a family $\mathcal{F}\subseteq \mathcal{X}$ of size $\mathrm{non}(\mathcal{M})$ which is not localized by $\mathcal{S}$. 
\end{prop}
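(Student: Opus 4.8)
The plan is to exhibit the family $\mathcal{F}$ directly from a witness to $\mathrm{non}(\mathcal{M})$ living inside $\mathcal{X}$, using the standard Bartoszy\'nski-type characterization of $\mathrm{non}(\mathcal{M})$ in terms of eventually-different functions, adapted to the bounded product space $\mathcal{X} = \prod_n 2^n$. Concretely, I would first recall that $\mathrm{non}(\mathcal{M})$ equals the least size of a family $\mathcal{G} \subseteq \omega^\omega$ such that for every $g \in \omega^\omega$ there is $f \in \mathcal{G}$ with $f(n) = g(n)$ for infinitely many $n$ (equivalently: no single $g$ is eventually different from all members of $\mathcal{G}$). This is a well-known reformulation; since the paper already cites \cite{Bartoszynski} freely, I would quote it from there.

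The main step is to transfer such a family into $\mathcal{X}$ and then check that ``infinitely-often-equal'' defeats localization by $\mathcal{S}$. Given $\mathcal{G} \subseteq \omega^\omega$ of size $\mathrm{non}(\mathcal{M})$ as above, I would code each $g \in \mathcal{G}$ as an element $\hat g \in \mathcal{X}$ by a block construction: partition $\omega$ into consecutive intervals $I_k$ whose lengths grow fast enough that $2^{\min I_k}$ exceeds the number of bits needed to write down $g \har k$ (say), and on the block $I_k$ let $\hat g$ record the value $g(k)$ in a fixed injective way, coordinate by coordinate, so that the tuple $\hat g \har I_k$ determines $g(k)$. Since coordinate $n$ of $\mathcal{X}$ ranges over $2^n$ and the blocks are pushed out to large indices, there is plenty of room. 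The point of the coding is: if $\hat g$ and $\hat g'$ agree on coordinate $n$ for enough $n$ inside a block $I_k$, then $g(k) = g'(k)$; more usefully, for the localization argument, if $S \in \mathcal{S}$ is a slalom and $\hat g(n) \in S(n)$ for all $n$ in a block $I_k$, then the number of possible values of $g(k)$ compatible with this is at most $\prod_{n \in I_k} |S(n)| \le \prod_{n \in I_k} 2^{n}/(\text{something})$ — too crude. So instead I would use a cleaner route below.

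The cleaner route, which I expect to be the actual argument: suppose toward a contradiction that $\mathcal{F} \subseteq \mathcal{X}$ has size $< \mathrm{non}(\mathcal{M})$ and is localized by a single slalom $S \in \mathcal{S}$, i.e. $f \subseteq^* S$ for all $f \in \mathcal{F}$; using the observation in the text preceding the proposition, enlarge $S$ so that $S(n) = 2^n \setminus \{s(n)\}$ for some $s \in \mathcal{X}$. Then ``$f \subseteq^* S$'' says exactly that $f(n) \neq s(n)$ for all but finitely many $n$, i.e. $s$ is eventually different from every $f \in \mathcal{F}$. Thus a localizing slalom for $\mathcal{F}$ yields (essentially) a function $s \in \mathcal{X}$ eventually different from all of $\mathcal{F}$. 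So it suffices to produce $\mathcal{F} \subseteq \mathcal{X}$ of size $\mathrm{non}(\mathcal{M})$ such that no $s \in \mathcal{X}$ is eventually different from every member — equivalently, for every $s \in \mathcal{X}$ there is $f \in \mathcal{F}$ with $f(n) = s(n)$ infinitely often. A family witnessing $\mathrm{non}(\mathcal{M})$ of infinitely-often-equal functions in $\prod_n 2^n$ is exactly the bounded version of the standard characterization, and I would again cite \cite{Bartoszynski} (Theorem 2.4.1 / 2.4.7 there), noting that the characterization of $\mathrm{non}(\mathcal{M})$ via i.o.-equality is insensitive to replacing $\omega^\omega$ by $\prod_n b(n)$ for any $b$ tending to infinity, by a routine reindexing into blocks (this is the one place block-coding is genuinely needed, and it is standard). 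Finally, I must rule out the finitely many exceptional coordinates and the difference between a genuine slalom $S \in \mathcal{S}$ and the co-singleton slaloms $2^n \setminus \{s(n)\}$: given any $S \in \mathcal{S}$ with $|S(n)| < 2^n$, pick $s(n) \notin S(n)$; then $f \subseteq^* S$ implies $f(n) \neq s(n)$ eventually, so i.o.-equality of some $f \in \mathcal F$ with this $s$ contradicts localization. The ``for all but finitely many'' slack is harmless because i.o.-equality is preserved under finite modifications of $s$.

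The main obstacle is purely bookkeeping: verifying that the i.o.-equal characterization of $\mathrm{non}(\mathcal{M})$ survives the passage from $\omega^\omega$ to the bounded space $\mathcal{X} = \prod_n 2^n$ via block coding, and keeping the quantifiers straight when converting between ``localized by a slalom'' and ``eventually different from some $s \in \mathcal{X}$.'' There is no real combinatorial depth here — the content is entirely in the cited characterization of $\mathrm{non}(\mathcal{M})$ — which is presumably why the authors label the proposition \emph{folklore}.
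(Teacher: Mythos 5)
Your argument is correct, and its second half --- converting a localizing slalom $S\in\mathcal{S}$ into a function $s\in\mathcal{X}$ with $s(n)\notin S(n)$, so that localization forces $s$ to be eventually different from every member of $\mathcal{F}$ --- is exactly the step the paper uses. Where you diverge is in producing the infinitely-often-equal family inside $\mathcal{X}$: you route through the cited Bartoszy\'nski characterization of $\mathrm{non}(\mathcal{M})$ in $\omega^\omega$ and a block-coding transfer to $\prod_n 2^n$, whereas the paper works entirely inside $\mathcal{X}$ and observes directly that for each $g\in\mathcal{X}$ the set $A_g=\{f:\exists^\infty n\, f(n)=g(n)\}$ is comeager (being $\bigcap_m\bigcup_{n>m}\{f:f(n)=g(n)\}$), so any non-meager subset of $\mathcal{X}$ of size $\mathrm{non}(\mathcal{M})$ meets every $A_g$ and is therefore the desired family. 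The paper's route is shorter and needs only the trivial direction of the characterization (non-meager $\Rightarrow$ i.o.-equal witnessing), avoiding the bookkeeping of the transfer you rightly flag as the main obstacle; your route works too, since decoding an arbitrary $s\in\mathcal{X}$ blockwise into some $g_s\in\omega^\omega$ and coding back does go through, but it invokes more machinery than is needed. One small slip: your ``toward a contradiction, suppose $\mathcal{F}$ has size $<\mathrm{non}(\mathcal{M})$ and is localized'' is not the right framing (small families certainly can be localized); what you actually prove, and what is needed, is the positive statement that some family of size exactly $\mathrm{non}(\mathcal{M})$ admits no eventually different $s$ and hence no localizing slalom.
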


\begin{proof}
	First, notice that for each $g\in \mathcal{X}$ the set $A_g$ is comeager. Indeed, let $A^n_g = \{f\in \mathcal{X}\colon f(n)=g(n)\}$ for $g\in \mathcal{X}$. Of course, each $A^n_g$ is open and $\bigcup_{n>m} A^n_g$ is dense for each $m\in \omega$. But
	\[ A_g = \bigcap_m \bigcup_{n>m} A^n_g. \]
Let $\{f_\alpha\colon \alpha<\mathrm{non}(\mathcal{M})\} \subseteq \mathcal{X}$ be a family witnessing $\mathrm{non}(\mathcal{M})$. Then for each $g\in \mathcal{X}$ there is an $\alpha$ such that $f_\alpha \in A_g$ and so $f_\alpha(n)=g(n)$ for
infinitely many $n$.

The family $\{f_\alpha\colon \alpha<\mathrm{add}(\mathcal{N})\}$ is not localized by $\mathcal{S}$, because for every $S\in \mathcal{S}$ there is $g_S\in \mathcal{X}$ such that $g_S(n)\notin S(n)$ for each $n$. Hence, there is an $\alpha$ such that $f_\alpha(n)
= g_S(n)$ for infinitely many $n$. So, for each $S\in \mathcal{S}$ there is an $\alpha<\mathrm{non}(\mathcal{M})$ such that $\{n\colon f_\alpha(n)\notin S(n)\}$ is infinite.
\end{proof}

Now, as in \cite[Theorem 4]{Kunen-Fremlin}, we will use a set of reals as above to find a $\subseteq^*$-chain in $\mathcal{W}$ which is not $\subseteq^*$-bounded in $\mathcal{S}$. The proof is essentially the same as there, with some minor
modifications, but we include it here for the sake of completeness.

\begin{thm}\label{kunen-fremlin2} Assume $\mathrm{add}(\mathcal{N}) = \mathrm{non}(\mathcal{M})$. There is a $\subseteq^*$-chain $\{A_\alpha\colon \alpha<\mathrm{add}(\mathcal{N})\} \subseteq \mathcal{W}$ such that for every
	$S\in\mathcal{S}$ there
	is $\alpha<\mathrm{add}(\mathcal{N})$ such that $A_\alpha \nsubseteq^* S$. 
\end{thm}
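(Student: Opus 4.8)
The plan is to mimic the construction of Kunen--Fremlin (Theorem \ref{kunen-fremlin}) but to use the family $\mathcal{F} = \{f_\alpha \colon \alpha < \mathrm{non}(\mathcal{M})\} \subseteq \mathcal{X}$ from Proposition \ref{non} as the input, rather than an arbitrary non-localized family in $\omega^\omega$. The point of the hypothesis $\mathrm{add}(\mathcal{N}) = \mathrm{non}(\mathcal{M})$ is twofold: first it makes the index set of $\mathcal{F}$ equal to $\mathrm{add}(\mathcal{N})$, which is a regular cardinal, and second it lets us do a transfinite recursion of length $\mathrm{add}(\mathcal{N})$ in which at each step we need to bound fewer than $\mathrm{add}(\mathcal{N})$ previously chosen objects --- which is possible precisely because, by Theorem \ref{Bartoszynski}, any family of size $< \mathrm{add}(\mathcal{N})$ is localized by $\mathcal{S}$ (and similarly, one can diagonalize past fewer than $\mathrm{add}(\mathcal{N})$ sets in $\mathcal{W}$, using that $\mathrm{add}(\mathcal{N}) \leq \mathfrak{b}$).

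The recursion would proceed as follows. Build $\subseteq^*$-increasing $A_\alpha \in \mathcal{W}$ for $\alpha < \mathrm{add}(\mathcal{N})$ so that $\{(n, f_\alpha(n)) \colon n > 0\} \subseteq^* A_\alpha$ (recall from the observation before Proposition \ref{non} that the graph of any $f \in \mathcal{X}$ lies in $\mathcal{W}$). At stage $\alpha$, suppose $\{A_\beta \colon \beta < \alpha\}$ has been constructed; since $\alpha < \mathrm{add}(\mathcal{N})$ and each $A_\beta \in \mathcal{W} \subseteq \mathcal{S}$, by Theorem \ref{Bartoszynski} there is $S \in \mathcal{S}$ with $A_\beta \subseteq^* S$ for all $\beta < \alpha$; after shrinking the $n$-th sections we may assume $S(n) \subseteq 2^n$ and in fact (using $\sum 1/2^n$-type estimates, or just by further localization inside the summable ideal, which is where the $\mathcal{W}$ vs.\ $\mathcal{S}$ distinction is handled) that $S \in \mathcal{W}$. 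Then set $A_\alpha = S \cup \{(n, f_\alpha(n)) \colon n > 0\}$; this is still in $\mathcal{W}$ because adding one point per column $\{n\} \times 2^n$ preserves both the slalom condition (we only need to miss one point, and $S$ missed at least one; one has to be slightly careful here and instead take $A_\alpha = S' \cup \mathrm{graph}(f_\alpha)$ where $S'$ is $S$ with, say, the largest element of each column deleted when that would collide) and the summability condition. Then $A_\alpha \supseteq^* A_\beta$ for all $\beta < \alpha$, and $A_\alpha \supseteq^* \mathrm{graph}(f_\alpha)$.

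Finally, to see that $\{A_\alpha \colon \alpha < \mathrm{add}(\mathcal{N})\}$ is not $\subseteq^*$-bounded in $\mathcal{S}$: given any $S \in \mathcal{S}$, by the second observation before Proposition \ref{non} (after intersecting with the columns $\{n\}\times 2^n$, which only helps) we may find $g_S \in \mathcal{X}$ with $g_S(n) \notin S(n)$ for every $n$; by the defining property of $\mathcal{F}$ from Proposition \ref{non} there is $\alpha$ with $f_\alpha(n) = g_S(n)$ for infinitely many $n$, so $\{n \colon f_\alpha(n) \notin S(n)\}$ is infinite, whence $\mathrm{graph}(f_\alpha) \nsubseteq^* S$, and since $\mathrm{graph}(f_\alpha) \subseteq^* A_\alpha$ we get $A_\alpha \nsubseteq^* S$. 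This is exactly the non-localization argument transported to sections of $\omega \times \omega$.

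The main obstacle, and the only place that is not bookkeeping, is maintaining membership in $\mathcal{W}$ rather than just $\mathcal{S}$ throughout the recursion: one has to check that the localizing slalom $S$ produced by Theorem \ref{Bartoszynski} at a limit-type stage can be taken to be summable (i.e.\ in $\mathcal{I}$), and that the one-point-per-column additions of $\mathrm{graph}(f_\alpha)$ do not destroy summability. The first point follows because the family $\{A_\beta\}$ already lies in the summable ideal $\mathcal{I}$ and one can localize "inside" $\mathcal{I}$ --- concretely, one applies Theorem \ref{Bartoszynski} to a suitably rescaled version of the sections so that the resulting slalom $S$ satisfies $|S(n)| \leq 2^n/(n+1)^2$ or similar, giving $\sum 2^{-n}|S(n)| < \infty$; this is the "minor modification" alluded to just before the statement of the theorem. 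The second point is immediate since adding at most one point to each of the columns $\{n\} \times 2^n$ changes the sum $\sum 2^{-n}|A_\alpha(n)|$ by at most $\sum 2^{-n} < \infty$. Everything else is the Kunen--Fremlin argument verbatim, with $\omega^\omega$ replaced by $\mathcal{X}$ and pointwise domination replaced by the section relations $\subseteq^*$ on $\omega \times \omega$.
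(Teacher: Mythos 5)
Your overall strategy is the paper's: run a recursion of length $\mathrm{add}(\mathcal{N})$, at stage $\alpha$ find a common $\subseteq^*$-upper bound in $\mathcal{W}$ for the previously built $A_\beta$'s, adjoin the graph of $f_\alpha$, and derive unboundedness from Proposition \ref{non} exactly as you describe; the closing argument and the observation that adding one point per column preserves membership in $\mathcal{W}$ are both fine. The gap is in the one step you yourself single out as non-trivial: producing a single $S\in\mathcal{W}$ with $A_\beta\subseteq^* S$ for all $\beta<\alpha$. Your concrete mechanism --- view each $A_\beta$ as the function $n\mapsto A_\beta(n)$ and apply Theorem \ref{Bartoszynski} to ``a suitably rescaled version of the sections'' so that the resulting $S$ satisfies $|S(n)|\leq 2^n/(n+1)^2$ --- does not work. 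Whatever width you allow the localizing slalom $\Phi$ in the countable space of finite sets, $\Phi(n)$ must contain the actual section $A_\beta(n)$ for cofinitely many $n$, for \emph{every} $\beta<\alpha$; and while $|A_\beta(n)|/2^n\to 0$ for each fixed $\beta$, this decay is not uniform in $\beta$, so at a given column $n$ the candidates in $\Phi(n)$ can include sections of size close to $2^n$ coming from ``late-decaying'' $A_\beta$'s. Already the union of two such candidates can exhaust $2^n$, so $\bigcup\Phi(n)$ need not even lie in $\mathcal{S}$, let alone have summable relative size; no per-column rescaling avoids this.

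What is actually needed (and what the paper does, following Kunen--Fremlin) is to first use $\mathrm{add}(\mathcal{N})\leq\mathfrak{b}$ to find one strictly increasing $g$ eventually dominating, for every $\beta<\alpha$, a function $g_\beta$ recording the decay rate $\sum_{i\geq g_\beta(n)}2^{-i}|A_\beta(i)|<2^{-n}$, and only then to localize: chop $\omega$ into the blocks $[g(n),g(n+1))$ and apply Theorem \ref{Bartoszynski}, with slalom width $n+1$, to the functions $F_\beta(n)=A_\beta\cap([g(n),g(n+1))\times\omega)$ viewed as elements of $\left([\omega\times\omega]^{<\omega}\right)^\omega$. The domination makes every candidate finite set appearing in the $n$-th block have summable mass less than $2^{-n}$ \emph{uniformly}, so the union of the at most $n$ relevant candidates contributes mass less than $n2^{-n}$ per block and fewer than $n\cdot 2^{i-n}<2^i$ points in each column $i$ of that block; it is this block structure indexed by $g$, not a rescaling of the columns, that forces the union into $\mathcal{W}$. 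You mention $\mathfrak{b}$ in passing, but it has to be woven into the localization step in exactly this way; as written, the bounding step would fail.
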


\begin{proof}
	Let $\mathcal{F} = \{f_\alpha\colon \alpha<\mathrm{add}(\mathcal{N}\}$ be as in Proposition \ref{non}.

	Let $A_0 = f_0\cap([1,\infty)\times \omega)$ and assume that we have constructed $A_\alpha$'s for $\alpha<\beta$. 
	For each $\alpha<\beta$ fix a function $g_\alpha \colon \omega\to \omega$ such that 
	\[ \sum_{i\geq g_\alpha(n)} \frac{1}{2^i}|A(i)| < 1/2^n. \]
	As $\beta<\mathrm{add}(\mathcal{N})\leq \mathfrak{b}$, there is a function $g\colon \omega\to \omega$ which is strictly increasing and which $\leq^*$-dominates
	$\{g_\alpha \colon \alpha<\beta\}$. For each $\alpha< \beta$, fix $m_\alpha$ such that $g(n)\geq g_\alpha(n)$ for each $n \geq m_\alpha$.

	Define $F_\alpha\colon \omega \to [\omega\times \omega]^{<\omega}$ such that
$$
F_\alpha(n) = 
\begin{cases}
A_\alpha \cap [g(n), g(n+1))\times \omega \mbox{ if }n\geq m_\alpha, \\
 \emptyset \mbox{ otherwise.}
\end{cases}
$$
Now, since $[\omega\times\omega]^{<\omega}$ is countable and $\beta<\mathrm{add}(\mathcal{N})$, by Theorem \ref{Bartoszynski} applied to the space $\omega^{[\omega\times\omega]^{<\omega}}$ we see that there is an $f$-slalom $\Phi \subseteq \omega \times [\omega\times\omega]^{<\omega}$ for $f\in \omega^\omega$ given by $f(n) =n+1$ which localises all of the $F_\alpha$. That is,
\begin{enumerate}
		\item $\{n\colon F_\alpha(n) \notin \Phi(n)\}$ is finite,
		\item $|\{I\colon (n,I)\in \Phi\}|\leq n$,
\end{enumerate}
Additionally, throwing out some elements if needed, we can assume that
	\begin{enumerate}
		\item[(3)] for each $(n,I)\in \Phi$ there is $\alpha<\beta$ such that $F_\alpha(n) = I$.
	\end{enumerate}
	The last condition implies that whenever $(n,I)\in \Phi$, then $I\subseteq [g(n),g(n+1))\times \omega$ and $\sum_{i\geq g(n)} \frac{1}{2^i}|I(i)| < \frac{1}{2^n}$. Also, if $I$ is such that $(n,I)\in \Phi$ and $(k,l)\in I$, then $(k,l)\in A_\alpha$ for
	some $\alpha<\beta$ and as we will see at the end of the proof, therefore a $\gamma \leq \alpha$ such that $l= f_\gamma(k)$. 
	Let \[ A = \bigcup\{I\colon \exists n \ (n,I)\in \Phi\}. \] Notice that \[ \sum_{g(n)\leq i < g(n+1)} \frac{1}{2^i}|A(i)| < \frac{n}{2^n} \] and since $\sum_n \frac{n}{2^n} = 2$, we have that $A\in \mathcal{W}$. Moreover, for each $\alpha<\beta$ there is $m\geq m_\alpha$ such that $(n,F_\alpha(n))\in \Phi$ for every $n\geq m$. So, $A_\alpha
\subseteq A\cup [0,g(m)]\times \omega$ and it follows that $A_\alpha \subseteq^* A$.

Now, it is easy to see that there is a $k< \omega$ such that $(A\cup f_\beta) \cap ([k,\infty)\times \omega) \in \mathcal{W}$. Put
	\[ A_\beta = (A\cup f_\beta) \cap ([k,\infty)\times \omega). \]
	We have now finished the construction. To see that $\{A_\alpha\colon \alpha<\mathrm{add}(\mathcal{N})\}$ is not $\subseteq^*$-bounded by any slalom in $\mathcal{S}$, notice that the family $\mathcal{F}$ was chosen so as to not be localised by any slalom in $\mathcal{S}$, and since every real from this family is $\subseteq^*$-contained in some $A_\alpha$ (to be more specific, we simply have that if $\alpha< \mathrm{add}(\mathcal{N})$ then $f_\alpha \subseteq^* A_\alpha$), so the former family also inherits this property.

	Clearly \[\bigcup_{\alpha<\mathrm{add}(\mathcal{N})} A_\alpha \subseteq
	\bigcup_{\alpha<\mathrm{add}(\mathcal{N})} f_\alpha,\]
	so $A_\alpha(n) \subseteq 2^n$ for each $n$ and $\alpha<\mathrm{add}(\mathcal{N})$.

\end{proof}

\begin{remark}\label{non-cov}
Let 
\[ \kappa = \min\{|\mathcal{D}|\colon \mathcal{D}\subseteq \mathcal{W}, \neg \exists S\in \mathcal{S} \ \forall D\in \mathcal{D} \ D\subseteq^* S\}. \]
The reader may notice that Proposition~\ref{non} amounts to a proof that $\kappa \leq \mathrm{non}(\mathcal{M})$, and that Theorem~\ref{kunen-fremlin2} can actually be proved from the assumption that $\mathrm{add}(\mathcal{N}) = \kappa$.

In fact, there is a better upper bound for $\kappa$ (than $\mathrm{non}(\mathcal{M})$). Recall that if $\mathcal{I}$ is an ideal on $\omega$, then $\mathrm{cov}^*(\mathcal{I})$ is the minimal size of a subfamily of $\mathcal{I}$ such that for every infinite $X\subseteq \omega$ there is an element of the family intersecting $X$ on an infinite set (see
e.g. \cite{Hrusak07}). In this setting $\kappa$
is the minimal size of a subfamily of $\mathcal{I}_{1/n}$ such that for each subset of $\omega$ intersecting each interval $[2^n,2^{n+1})$ at least once, there is an element of this family intersecting it infinitely many times. Clearly then, $\kappa\leq \mathrm{cov}^*(\mathcal{I}_{1/n})$.

It is also not hard to see that $\mathrm{cov}(\mathcal{N}) \leq \kappa$. Indeed, for $W\in \mathcal{W}$ let $A_W =  \{f\in \mathcal{X}\colon \exists^\infty n \ f(n)\in W(n)\}$. By the Borel-Cantelli Lemma,
$\lambda(A_W)=0$ for each $W\in \mathcal{W}$. If $\mathcal{F}\subseteq \mathcal{W}$ is not bounded by any slalom, then each $f\in \mathcal{X}$ is in some $A_F$, $F\in \mathcal{F}$ (since we can in particular consider the slalom which on every $n$ is exactly $2^n\setminus
\{f(n)\}$). Hence, if $\mathcal{F}$ witnesses $\kappa$, then the family $\{A_F\colon F\in \mathcal{F}\}$
covers $\mathcal{X}$, and hence has size at least $\mathrm{cov}(\mathcal{N})$.

In fact, if $\mathrm{cov}(\mathcal{N})<\mathfrak{b}$, then $\kappa = \mathrm{cov}(\mathcal{N})$ (see \cite[Theorem 2.2]{Bartoszynski88}). It seems likely that consistently $\mathrm{cov}(\mathcal{N})<\kappa$, 
but we were not able to prove it.
\end{remark}

Assume that $\mathcal{F}\subseteq \omega^\omega$ is not localized by $\mathcal{S}_g$.
We say that $\mathcal{F}$ is $g$-\emph{destructible} by a forcing poset $\mathbb{P}$  if 
\[ \Vdash_\mathbb{P} ``\check{\mathcal{F}} \mbox{ is localized by }\dot{\mathcal{S}_g}". \]
Similarly, if $\mathcal{A}$ is a family not bounded by any element of $\mathcal{S}_g$, then we say that it is $g$-\emph{destructible} by $\mathbb{P}$ if 
\[ \Vdash_\mathbb{P} ``\check{\mathcal{A}} \mbox{ is bounded by }\dot{\mathcal{S}_g}". \]
As before \emph{destructible} means $h$-destructible ($h(n)=2^n$).

We will fix some notation for the rest of the article. 
For $n>0$, $k< 2^n$ let $I^n_k = \{f\in\mathcal{X}\colon f(n)=k\}$. We will consider the measure algebra $\mathbb{M}$ in the following incarnation: $\mathbb{M} = \mathrm{Bor}(\mathcal{X})/_{\lambda=0}$. Define an $\mathbb{M}$-name $\dot{S}$ for a subset of $\omega\times \omega$ in the following way:
\[ \llbracket k\in \dot{S}(n) \rrbracket = \mathcal{X}\setminus I^n_k. \]
Clearly, $\dot{S}$ is an $\mathbb{M}$-name for a slalom. In fact, \[ \Vdash_\mathbb{M} ``\exists \dot{f}\in \dot{\mathcal{X}} \ \dot{S}(n) = 2^n \setminus \{\dot{f}(n)\}", \]
and $\dot{f}$ is a name for a random real.

We will prove that the family $\mathcal{W}$ is destructible by $\mathbb{M}$. 

\begin{prop} \label{E-destructible} 
For every $W\in \mathcal{W}$ 
\[ \Vdash_\mathbb{M} ``\check{W} \subseteq^* \dot{S}" \]
\end{prop}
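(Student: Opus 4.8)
The plan is to unwind what the name $\dot S$ says about the generic object and then apply the Borel--Cantelli Lemma. Recall that forcing with $\mathbb{M} = \mathrm{Bor}(\mathcal{X})/_{\lambda = 0}$ adjoins a point $\dot f \in \mathcal{X} = \prod_n 2^n$ (the random real), and by the very definition of $\dot S$ we have $\Vdash_\mathbb{M} \dot S(n) = 2^n \setminus \{\dot f(n)\}$. Hence, for a fixed $W \in \mathcal{W}$, the assertion $\check W \subseteq^* \dot S$ is forced if and only if it is forced that $\dot f(n) \notin W(n)$ for all but finitely many $n$; equivalently, it is forced that $\dot f(n) \in W(n)$ holds for only finitely many $n$.

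First I would identify the relevant Boolean values. For each $n$ and each $k < 2^n$ we have $\llbracket \dot f(n) = k \rrbracket = I^n_k$, so
\[
\llbracket \dot f(n) \in W(n) \rrbracket \;=\; \bigcup_{k \in W(n)} I^n_k,
\]
a clopen subset of $\mathcal{X}$ of measure $\lambda\!\left(\bigcup_{k\in W(n)} I^n_k\right) = |W(n)|/2^n$. Consequently the event ``$\dot f(n) \in W(n)$ for infinitely many $n$'' has Boolean value the genuine Borel set
\[
\limsup_n \bigcup_{k\in W(n)} I^n_k \;=\; \bigcap_{m} \bigcup_{n \ge m} \bigcup_{k\in W(n)} I^n_k,
\]
so there is no subtlety about whether this is a legitimate element of $\mathbb{M}$.

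Next I would invoke that $W \in \mathcal{W} \subseteq \mathcal{I}$, which gives precisely $\sum_n |W(n)|/2^n < \infty$. By the Borel--Cantelli Lemma the $\limsup$ set above is $\lambda$-null, hence equals $0$ in $\mathbb{M}$. Therefore $\Vdash_\mathbb{M}$ ``$\{n : \dot f(n) \in W(n)\}$ is finite'', which by the first paragraph is exactly $\Vdash_\mathbb{M} \check W \subseteq^* \dot S$, as desired.

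There is essentially no hard step here: the proof is a one-line application of Borel--Cantelli once the name $\dot S$ has been translated into a statement about the random real $\dot f$. The only points requiring (routine) care are checking that the $\limsup$ event is a bona fide Borel set so that it makes sense to speak of its value in $\mathbb{M}$, and recording that membership in $\mathcal{I}$ supplies exactly the convergence hypothesis that Borel--Cantelli needs.
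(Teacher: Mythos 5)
Your proof is correct and is essentially the paper's argument: the paper's density computation (given $p$ with $\lambda(p)>\varepsilon$, subtract the tail union $\bigcup_{i>n}\bigcup_{k\in W(i)}I^i_k$ of measure $<\varepsilon$) is just the Borel--Cantelli lemma inlined at the level of conditions. Translating $\dot S$ into the random real $\dot f$ and quoting Borel--Cantelli for the $\limsup$ set is a clean, equivalent packaging of the same computation.
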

\begin{proof}
	Fix a $W\in \mathcal{W}$ and a $p\in
	\mathbb{M}$ of positive measure, and let $\varepsilon>0$ be such that $\lambda(p)>\varepsilon$. Take $n$ such that $\sum_{i>n} \frac{1}{2^i} |W(i)| < \varepsilon$. Clearly, 
	\[ \sum_{i>n} \lambda( \bigcup_{k \in W(i)} I^i_k ) <\varepsilon \]
and so if 
\[ q = \bigcup_{i>n} \bigcup_{k\in W(i)}  I^i_k, \]
then $\lambda(q)<\varepsilon$. So we finish by noticing that
\[ \emptyset \ne p \setminus q \Vdash ``\forall i>n \ \check{W}(i) \in \dot{S}(i)". \]
\end{proof}

\section{Applications}\label{applications}

\subsection{Non-separable growths of $\omega$ supporting a measure}\label{meas}

First, we are going to apply the results from the previous section to construct some non-separable ccc compact spaces. We will use the following theorem due to Kamburelis:

\begin{thm} \cite[Proposition 3.7]{Kamburelis}\label{kamburelis}
	A Boolean algebra $\mathfrak{A}$ supports a measure if and only if there is a cardinal $\kappa$ such that $\Vdash_{\mathbb{M}_\kappa}$ ``$\check{\mathfrak{A}}$ is $\sigma$-centered''.
\end{thm}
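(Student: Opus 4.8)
The plan is to prove both implications directly: the forward one via Maharam's structure theorem together with an $\omega$-fold random product, and the converse by pushing a measure from the generic extension back down to $V$ by integration against $\lambda_\kappa$.

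\emph{Supports a measure $\Rightarrow$ $\sigma$-centered after random forcing.} Suppose $\mu$ is a strictly positive finitely additive probability measure on $\mathfrak{A}$, and let $\nu$ be its Radon extension to the Stone space $K$ of $\mathfrak{A}$. Since $\mu$ is strictly positive, the natural homomorphism from $\mathfrak{A}$ into the measure algebra $\mathbb{B}$ of $(K,\nu)$ is injective, so $\mathfrak{A}$ sits inside $\mathbb{B}$ as a Boolean subalgebra whose nonzero elements have positive measure. As $\mathbb{B}$ is a measure algebra of density at most $|\mathfrak{A}|$, Maharam's theorem gives a Boolean embedding $\mathbb{B}\hookrightarrow\mathbb{M}_\kappa$ for $\kappa=|\mathfrak{A}|$ (take $\mathfrak{A}$ infinite, the finite case being trivial), and composing we obtain $\mathfrak{A}\hookrightarrow\mathbb{M}_\kappa$ carrying $\mathfrak{A}^+$ into the positive-measure elements of $\mathbb{M}_\kappa$. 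Now force with $\mathbb{M}_\kappa$, written as $\mathbb{M}_\kappa\cong\mathbb{M}_\kappa^{\otimes\omega}$ (a homogeneous measure algebra of Maharam type $\kappa\cdot\omega=\kappa$); its generic object is a sequence $(x_n)_{n<\omega}$ of mutually random points. Working in $V[G]$, for each $n$ set $\mathcal{U}_n=\{a\in\mathfrak{A}\colon x_n\in B_a\}$, where $B_a$ is a fixed $V$-Borel representative of the image of $a$ in $\mathbb{M}_\kappa$; since $x_n$ avoids every $V$-coded null set, $\mathcal{U}_n$ is an ultrafilter on $\mathfrak{A}$, hence centered. The key point is that the countably many centered families $\mathcal{U}_n\cap\mathfrak{A}^+$ cover $\mathfrak{A}^+$: if $a\in\mathfrak{A}^+$ then $\lambda_\kappa(B_a)>0$, so the event ``$x_n\notin B_a$ for all $n$'' has measure $\prod_n(1-\lambda_\kappa(B_a))=0$ and is therefore forced to fail, i.e.\ $a\in\mathcal{U}_n$ for some $n$. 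Hence $\Vdash_{\mathbb{M}_\kappa}$ ``$\check{\mathfrak{A}}$ is $\sigma$-centered''.

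\emph{$\sigma$-centered after random forcing $\Rightarrow$ supports a measure.} Assume $\Vdash_{\mathbb{M}_\kappa}$ ``$\check{\mathfrak{A}}$ is $\sigma$-centered''. In $\mathsf{ZFC}$ a $\sigma$-centered Boolean algebra supports a strictly positive probability measure: given $\mathfrak{A}^+=\bigcup_n\mathcal{C}_n$ with each $\mathcal{C}_n$ centered, extend each $\mathcal{C}_n$ to an ultrafilter, let $\mu_n$ be the associated $\{0,1\}$-measure, and put $\mu=\sum_n 2^{-n-1}\mu_n$. Applying this inside $V[G]$ and invoking the maximum principle, fix an $\mathbb{M}_\kappa$-name $\dot\mu$ forced to be a strictly positive finitely additive probability measure on $\check{\mathfrak{A}}$. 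Back in $V$, define $\mu'$ on $\mathfrak{A}$ by averaging against the measure of the forcing: $\mu'(a)=\int_0^1\lambda_\kappa(\llbracket\dot\mu(\check a)\ge t\rrbracket)\,dt$, the $\lambda_\kappa$-expectation of the random variable named by $\dot\mu(\check a)$. Since $\mathbb{M}_\kappa$-names for reals in $[0,1]$ correspond to $\lambda_\kappa$-measurable functions on $[0,1]^\kappa$ and forcing respects addition, this expectation is linear and fixes constants, so $\mu'$ is a finitely additive probability measure on $\mathfrak{A}$ (additivity on disjoint $a,b$ follows from $\Vdash\dot\mu(\check a\cup\check b)=\dot\mu(\check a)+\dot\mu(\check b)$, which holds because disjointness is absolute). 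Finally $\mu'$ is strictly positive: if $a\ne 0$ then $\Vdash\dot\mu(\check a)>0$, so $\sup_m\lambda_\kappa(\llbracket\dot\mu(\check a)\ge 2^{-m}\rrbracket)=1$, whence $\lambda_\kappa(\llbracket\dot\mu(\check a)\ge 2^{-m}\rrbracket)>0$ for some $m$ and $\mu'(a)\ge 2^{-m}\lambda_\kappa(\llbracket\dot\mu(\check a)\ge 2^{-m}\rrbracket)>0$. Thus $\mathfrak{A}$ supports a measure in $V$.

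The step I expect to need the most care is the forward implication. One must resist forcing with $\mathbb{M}_\kappa$ and reading off ``the'' single generic point, since the resulting single generic ultrafilter covers only those positive elements it happens to contain; the argument hinges on replacing $\mathbb{M}_\kappa$ by $\mathbb{M}_\kappa^{\otimes\omega}$ so as to extract $\omega$ mutually random points, so that the Borel--Cantelli computation $\prod_n(1-\lambda_\kappa(B_a))=0$ sweeps every positive element into one of the countably many centered pieces; and Maharam's theorem is genuinely used, to land inside a homogeneous $\mathbb{M}_\kappa$ in the first place. (Note also that $\sigma$-centeredness is preserved by any further forcing, so whichever $\kappa$ works here, all larger ones do too.) In the converse the delicate point is that ``supports a measure'' is not visibly absolute, which is why the $V[G]$-measure is integrated against $\lambda_\kappa$ rather than used directly, and one must check — as above — that strict positivity is not destroyed by the averaging.
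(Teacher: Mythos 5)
The paper does not prove this statement; it is quoted verbatim from Kamburelis and used as a black box, so there is no in-paper argument to compare against. Your proof is correct and is essentially the standard (indeed, Kamburelis's own) argument: the forward direction via the Maharam embedding of the measure algebra of the Radon extension into $\mathbb{M}_\kappa$ together with the identification $\mathbb{M}_\kappa\cong\mathbb{M}_\kappa^{\otimes\omega}$ producing $\omega$ mutually random points whose induced ultrafilters cover $\mathfrak{A}^+$ by the zero--one product-measure computation, and the converse by taking the $\lambda_\kappa$-expectation of the name $\dot\mu(\check a)$, with the only glossed steps (the Maharam type of the Radon measure algebra being at most $|\mathfrak{A}|$ via density of the clopen algebra, and the correspondence between $\mathbb{M}_\kappa$-names for reals and measurable functions) being standard.
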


We will need the following fact.

\begin{prop}\label{pi-base} Assume that $\mathcal{B} \subseteq \mathcal{S}$ is closed under finite unions (as long as they belong to $\mathcal{S}$). Then the family $\{T_B\cap T_{(T,n)}\colon B\in \mathcal{B}, (T,n)\in \Omega\}/{\rm Fin} \setminus
	\{\emptyset\}$ forms a $\pi$-base of $\mathfrak{T}_\mathcal{B}/Fin$.
\end{prop}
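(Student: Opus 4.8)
The plan is to show that every nonempty element of $\mathfrak{T}_\mathcal{B}/{\rm Fin}$ contains (modulo finite sets) a nonzero element of the form $T_B \cap T_{(T,n)}$. Since $\mathfrak{T}_\mathcal{B}$ is generated by the sets $T_A$ for $A \in \mathcal{B}$ together with the sets $T_{(S,n)}$ for $(S,n)\in\Omega$, a typical element of $\mathfrak{T}_\mathcal{B}$ is a finite union of elements of the form
\[
\bigcap_{A \in \mathcal{F}_0} T_A \cap \bigcap_{A \in \mathcal{F}_1} T_A^c \cap \bigcap_{(S,n)\in \mathcal{G}_0} T_{(S,n)} \cap \bigcap_{(S,n)\in \mathcal{G}_1} T_{(S,n)}^c,
\]
with $\mathcal{F}_0,\mathcal{F}_1 \subseteq \mathcal{B}$ and $\mathcal{G}_0, \mathcal{G}_1$ finite subsets of $\Omega$. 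It suffices to work with one such ``atom'' of the generating partition, since if the union is nonzero mod ${\rm Fin}$ then at least one summand is. First I would use the Fact listed in the Todor\v{c}evi\'c section: $\bigcap_{A \in \mathcal{F}_0} T_A = T_{\bigcup \mathcal{F}_0}$, and by the closure hypothesis on $\mathcal{B}$ the set $B := \bigcup \mathcal{F}_0$ is again in $\mathcal{S}$ (if it failed to be in $\mathcal{S}$, then $T_B$ would be finite and the atom would be zero mod ${\rm Fin}$, so we may assume $B \in \mathcal{B} \cap \mathcal{S}$). Thus the positive part collapses to a single $T_B$ with $B \in \mathcal{B}$.

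Next I would eliminate the negative clopen sets. For the $T_A^c$ with $A \in \mathcal{F}_1$: each such $A \not\subseteq^* B$ is the only way the atom can be nonzero — indeed if $A \subseteq^* B$ then $T_B \subseteq^* T_A$ and the atom is finite. So for each $A \in \mathcal{F}_1$ we may pick a point $(k_A, \ell_A) \in A \setminus B$ with $\ell_A < 2^{k_A}$, and note that restricting to large enough ``stems'' forces us away from $T_A$: any $(T,m) \in \Omega$ with $m > k_A$ and $(k_A,\ell_A) \notin T$ lies outside $T_A$. Since $B(k_A)$ misses $\ell_A$ and $B \in \mathcal{S}$, we can find a stem $(T,n) \in \Omega$ with $n$ larger than all the finitely many $k_A$'s, with $B \cap (n \times 2^n) \subseteq T$ (possible since $B \in \mathcal{S}$, after enlarging $T$ within a slalom), and with $(k_A,\ell_A) \notin T$ for each $A \in \mathcal{F}_1$; then $T_{(T,n)} \subseteq T_B$ and $T_{(T,n)} \cap T_A = \emptyset$ for each $A \in \mathcal{F}_1$, so $T_{(T,n)}$ refines the ``$T_B \cap \bigcap T_A^c$'' part. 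Handling the stem constraints $\mathcal{G}_0, \mathcal{G}_1$ is then a compatibility check on a single stem: $\bigcap_{(S,n)\in \mathcal{G}_0} T_{(S,n)}$ is nonzero mod ${\rm Fin}$ only if the stems in $\mathcal{G}_0$ are pairwise coherent and cohere with $(T,n)$, in which case they determine a common extension stem $(T',n')$, and avoiding the finitely many $T_{(S,n)}^c$ in $\mathcal{G}_1$ is a matter of taking $(T',n')$ with $n'$ large enough to decide each of those stems the ``wrong'' way — again possible unless one of them is forced, which would make the atom zero mod ${\rm Fin}$. The upshot is a single $(T',n') \in \Omega$ with $T_{(T',n')} \subseteq T_B \cap (\text{atom})$ mod ${\rm Fin}$, and since $T_{(T',n')} = T_B \cap T_{(T',n')}$ once $B \cap (n' \times 2^{n'}) \subseteq T'$, we get an element of the claimed form inside the given one.

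I expect the main obstacle to be bookkeeping rather than conceptual: carefully arranging a single stem $(T',n')$ that simultaneously (i) extends/coheres with all the positive stems in $\mathcal{G}_0$, (ii) stays inside $T_B$, i.e.\ $B \cap (n' \times 2^{n'}) \subseteq T'$, which needs $B \in \mathcal{S}$ so that at each level below $n'$ there is room in $2^n$, (iii) excludes each $(k_A,\ell_A)$ for $A \in \mathcal{F}_1$ (compatible with (ii) precisely because $(k_A,\ell_A) \notin B$), and (iv) differs from each forbidden stem in $\mathcal{G}_1$ — and then checking that if any of these demands is unsatisfiable, the original atom was already finite, so that case contributes nothing to the $\pi$-base requirement. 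I would also need the trivial direction that each $T_B \cap T_{(T,n)}$ is genuinely in $\mathfrak{T}_\mathcal{B}$ (immediate from the definition of the generators) so that the proposed family really is a subfamily of $\mathfrak{T}_\mathcal{B}/{\rm Fin}$. Once the single-stem refinement is in hand, the $\pi$-base property is exactly the statement that every nonzero element has such a refinement below it, which is what we will have shown.
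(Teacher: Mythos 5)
Your overall strategy --- reduce to a single ``atom'' $\bigcap_{\mathcal{F}_0} T_A \cap \bigcap_{\mathcal{F}_1} T_A^c \cap \bigcap_{\mathcal{G}_0} T_{(S,n)} \cap \bigcap_{\mathcal{G}_1} T_{(S,n)}^c$, collapse the positive part to a single $T_B$ via closure under unions, and refine by one sufficiently long stem --- is the right one (the paper itself only points to Claim 1 of Todor\v{c}evi\'{c}'s Theorem 8.4, which proceeds along these lines). However, two of the assertions you lean on are false, and the second of them conceals the one verification that carries the real weight of the proposition, namely that the element of the family you produce is \emph{nonzero} modulo $\mathrm{Fin}$.

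First, ``$A \subseteq^* B$ implies $T_B \subseteq^* T_A$'' is wrong: if $A = B \cup \{(k,\ell)\}$ with $(k,\ell)\notin B$, then for every large $m$ the pair $(B\cap(m\times 2^m),m)$ lies in $T_B\setminus T_A$, so $T_B\setminus T_A$ is infinite. The correct dichotomy is $A\subseteq B$ (atom literally empty) versus $A\setminus B\neq\emptyset$ (pick a point); your construction happens to cover the case $A\subseteq^* B$, $A\not\subseteq B$, but as written you discard it as ``finite'' when the atom need not be. (You also need the chosen point to lie outside the stem $S^*$ determined by $\mathcal{G}_0$, not merely outside $B$; if $A\subseteq B\cup S^*$ the atom really is finite, and this is one of the compatibility checks you defer without proof.) Second, and more seriously, ``$T_{(T',n')} = T_B\cap T_{(T',n')}$ once $B\cap(n'\times 2^{n'})\subseteq T'$'' is false: an element $(T,m)$ of $T_{(T',n')}$ is constrained only on $n'\times 2^{n'}$ (take for instance $T=T'$ viewed inside $m\times 2^m$), so it need not contain the points of $B$ at levels in $[n',m)$, and $T_{(T',n')}\not\subseteq T_B$ in general. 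As a result you never verify that $T_B\cap T_{(T',n')}$ is infinite. This does require an argument: any $(T,m)\in T_B\cap T_{(T',n')}$ must contain $T'\cup(B\cap(m\times 2^m))$, and this minimal candidate is a slalom for all large $m$ precisely when $|T'(i)\cup B(i)|<2^i$ at every level $i<n'$; that inequality is available because the original atom, and hence $T_B\cap T_{(S^*,n^*)}$, was assumed infinite, provided you take $T'$ to be the minimal admissible extension of $S^*$ by $B$ below level $n'$. Supplying this computation, together with the deferred compatibility checks, is what turns your sketch into a proof.
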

\begin{proof}
	See Claim 1 of \cite[Theorem 8.4]{Todorcevic}.
\end{proof}

\begin{thm} \label{main}
	If $\mathcal{B}$ is closed under finite unions (as long as they belong to $\mathcal{S}$), and $\mathcal{B}$ is destructible by some $\mathbb{M}_\kappa$, then the Boolean algebra $\mathfrak{T}_\mathcal{B}/{\rm Fin}$ supports a measure. 
\end{thm}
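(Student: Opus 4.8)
The plan is to apply Kamburelis' characterization (Theorem~\ref{kamburelis}): it suffices to show that $\Vdash_{\mathbb{M}_\kappa}$ ``$\mathfrak{T}_\mathcal{B}/\mathrm{Fin}$ is $\sigma$-centered'', where $\kappa$ is the cardinal witnessing destructibility of $\mathcal{B}$. By hypothesis, $\Vdash_{\mathbb{M}_\kappa}$ there is a slalom $S \in \mathcal{S}$ such that $B \subseteq^* S$ for every $B \in \mathcal{B}$. I would work in the generic extension $V[G]$ and fix such an $S$; by the observations collected just before Proposition~\ref{non} I may assume $S(n) = 2^n \setminus \{f(n)\}$ for some $f \in \mathcal{X}$ (enlarging $S$ only helps, since it keeps everything $\subseteq^*$-below it). The key point is that $T_S$ should serve as a ``master condition'': since every $B \in \mathcal{B}$ satisfies $B \subseteq^* S$, we have $T_B \supseteq^* T_S$ (using part (3) of the Fact, modulo a finite correction coming from the finitely many levels where $B(n) \not\subseteq S(n)$ — here I need the Fin-quotient, which absorbs exactly such finite errors), so that modulo Fin the element $T_S$ is contained in every generator $T_B$.

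Next I would use the $\pi$-base description from Proposition~\ref{pi-base}: the nonzero elements of $\{T_B \cap T_{(T,n)} : B \in \mathcal{B},\ (T,n) \in \Omega\}/\mathrm{Fin}$ form a $\pi$-base of $\mathfrak{T}_\mathcal{B}/\mathrm{Fin}$. To show $\sigma$-centeredness of a Boolean algebra it is enough to write its $\pi$-base as a countable union of centered families; so I would define, for each $(T,n) \in \Omega$, the family
\[ \mathcal{C}_{(T,n)} = \{ T_B \cap T_{(T,n)} : B \in \mathcal{B},\ T_B \cap T_{(T,n)} \ne^* \emptyset \} / \mathrm{Fin}. \]
Since $\Omega$ is countable, this is a countable partition of (a cofinal part of) the $\pi$-base. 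It remains to check each $\mathcal{C}_{(T,n)}$ is centered: given finitely many $B_1, \dots, B_m \in \mathcal{B}$ with each $T_{B_i} \cap T_{(T,n)}$ nonzero mod Fin, I want $\bigcap_i T_{B_i} \cap T_{(T,n)} \ne^* \emptyset$. Here is where $\mathcal{B}$ being closed under finite unions (within $\mathcal{S}$) enters: set $B = B_1 \cup \dots \cup B_m$. Because each $B_i \subseteq^* S$ and $S \in \mathcal{S}$, the union $B$ is also $\subseteq^* S$, and in particular $B \in \mathcal{S}$ (a $\subseteq^*$-subset of a slalom is, after a finite modification, again a slalom — and the finite modification is invisible mod Fin), so $B \in \mathcal{B}$ by the closure hypothesis. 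Then $\bigcap_i T_{B_i} = T_B$ by part (2) of the Fact, and $T_B \cap T_{(T,n)}$ is nonempty mod Fin: indeed $T_B \supseteq^* T_S$, and $T_S \cap T_{(T,n)}$ is infinite because $S$ restricted to $n \times 2^n$ can be completed to agree with $T$ on that initial block (using that $S$, being a slalom missing a point at each level, leaves room) — more carefully, one checks $T_{S'} \cap T_{(T,n)}$ is infinite for a suitable $S' \supseteq S$, or simply that $T_B$, being the trace of a condition in a nonseparable-but-ccc setup, meets the cylinder $T_{(T,n)}$ on an infinite set whenever the individual $T_{B_i}$ do, which follows from the filter-like behaviour of the $T_A$'s.

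The main obstacle I anticipate is the bookkeeping around $\subseteq^*$ versus $\subseteq$ and the passage to the Fin-quotient: the hypothesis gives $B \subseteq^* S$, not $B \subseteq S$, so $T_B$ and $T_S$ only agree mod Fin, and I must be careful that finitely many ``bad levels'' never obstruct centeredness — this is exactly why everything is phrased in $\mathfrak{T}_\mathcal{B}/\mathrm{Fin}$ and why Proposition~\ref{pi-base} is stated for the quotient. A secondary subtlety is verifying that $T_S \cap T_{(T,n)}$ is genuinely infinite (not just nonempty) for the relevant $(T,n)$, i.e.\ that the master condition $T_S$ survives intersection with the cylinders appearing in the $\pi$-base; this should follow from the fact that $S \in \mathcal{S}$ is infinite as a slalom, so $T_S$ is infinite by part (1) of the Fact, and a genericity/combinatorial argument shows it meets each relevant cylinder infinitely often. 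Once these points are nailed down, $\{\mathcal{C}_{(T,n)} : (T,n) \in \Omega\}$ witnesses $\sigma$-centeredness of $\mathfrak{T}_\mathcal{B}/\mathrm{Fin}$ in $V[G]$, and Theorem~\ref{kamburelis} gives that $\mathfrak{T}_\mathcal{B}/\mathrm{Fin}$ supports a measure in $V$.
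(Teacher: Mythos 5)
Your overall strategy is the paper's: apply Kamburelis' theorem, pass to the $\pi$-base of Proposition~\ref{pi-base} in $V[G]$, and exhibit a countable decomposition into centered families. But your decomposition is too coarse, and the centeredness of your families $\mathcal{C}_{(T,n)}$ genuinely fails. The problem is exactly the $\subseteq^*$-versus-$\subseteq$ issue you flag, and the Fin-quotient does \emph{not} absorb it. Two separate points go wrong. First, $B\subseteq^* S$ does \emph{not} imply $T_B\supseteq^* T_S$: if $(k,l)\in B\setminus S$, then every $(T,n)\in T_S$ with $n>k$ and $(k,l)\notin T$ lies in $T_S\setminus T_B$, and there are typically infinitely many such $(T,n)$; so there is no ``master condition'' $T_S$ sitting below all the $T_B$ mod Fin. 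Second, and fatally for centeredness: if $B_1,B_2\in\mathcal{B}$ and $(B_1\cup B_2)(m)=2^m$ for even a single level $m$, then $T_{B_1}\cap T_{B_2}=T_{B_1\cup B_2}$ contains no $(T,n)$ with $n>m$ and hence is finite, i.e.\ zero in the quotient --- this is precisely the mechanism the paper uses in the proof of Theorem~\ref{todor} to show \emph{non}-$\sigma$-centeredness. Such pairs $B_1,B_2$ can both satisfy $B_i\subseteq^* S$ (the level $m$ being among the finitely many exceptional ones) and can both have $T_{B_i}\cap T_{(T,n)}$ infinite for a common $(T,n)$ with $n\le m$; for instance $\mathcal{W}$, to which Theorem~\ref{main} is applied in Theorem~\ref{growth}, contains many such pairs. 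So your $\mathcal{C}_{(T,n)}$ is not centered, and the closure-under-unions hypothesis does not rescue you, since it only applies when the union is again in $\mathcal{S}$.

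The repair is the paper's one extra idea: refine the index set. Let $\mathcal{D}=\{D\in\mathcal{S}\colon D=^*S\}$, which is countable (a finite modification of $S$ is determined by a finite subset of $\omega\times\omega$). For each $B\in\mathcal{B}$ with $B\subseteq^* S$ there is $D\in\mathcal{D}$ with $B\subseteq D$ \emph{genuinely} (replace $S(n)$ by $B(n)$ at the finitely many bad levels). Now set $\mathcal{C}^D_{(T,n)}=\{T_B\cap T_{(T,n)}\colon B\in\mathcal{B},\ B\subseteq D\}$ for those $D,(T,n)$ with $T_D\cap T_{(T,n)}$ infinite. Any finite subfamily has intersection $T_{B_1\cup\dots\cup B_k}\cap T_{(T,n)}\supseteq T_D\cap T_{(T,n)}$ by parts (2) and (3) of the Fact (note $T_{A\cup B}=T_A\cap T_B$ needs no hypothesis that $A\cup B\in\mathcal{S}$), so each $\mathcal{C}^D_{(T,n)}$ is centered outright, the union over the countably many pairs $(D,(T,n))$ covers the $\pi$-base, and the rest of your argument goes through.
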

\begin{proof}
	Assume that $\mathcal{B}$ is destructible by $\mathbb{M}_\kappa$. Let $V$ be the ground model and let $G$ be $V$-generic for $\mathbb{M}_\kappa$. We shall show that $\mathcal{B}$ is $\sigma$-centered in $V[G]$, which, by Theorem~\ref{kamburelis} will let us finish. But in fact we can get away with even less, since if we can show that some $\pi$-base of $\mathcal{B}$ is $\sigma$-centered, then from a countable partition of this $\pi$-base into centered sets we can easily get a countable partition of the whole of $\mathcal{B}^+$ into centered sets. And naturally the $\pi$-base that we have in mind is the one that is furnished to us by Proposition~\ref{pi-base}.
	
	We work in $V[G]$. We know that there is some $S\in \mathcal{S}$ (note that here we are applying the \emph{formula} for $\mathcal{S}$, so this might be strictly larger than $\mathcal{S}^V$) such that for every $B \in \mathcal{B}$ (here on the other hand we are considering $\mathcal{B}$ as a \emph{set} from the ground model) we have $B \subseteq^* S$.
	
	Let $\mathcal{D} = \{D\in \mathcal{S}\colon S=^*D\}$. 
	For $D\in \mathcal{D}$ and $(T,n)\in \Omega$ such that $T_D \cap T_{(T,n)}$ is infinite, let 
	\[ \mathcal{C}^D_{(T,n)} = \{T_B \cap T_{(T,n)}\in \mathfrak{T}_{\mathcal{B}}\colon B \in \mathcal{B}, B\subseteq D\},\]
	where we point out that if $B \subseteq D$, then $T_B \supseteq T_D$, so if $T_D \cap T_{(T,n)}$ is infinite, then since this set is contained in $T_B\cap T_{(T,n)}$, the latter set is infinite too.
	
	Clearly, for each such $D$ and $(T,n)$, the family $\mathcal{C}^D_{(T,n)}/{\rm Fin}$ is centered, and since we have that for every $B \in \mathcal{B}$ there is a $D \in \mathcal{D}$ such that $B \subseteq D$, we see that the collection of such $\mathcal{C}^D_{(T,n)}/{\rm Fin}$ is a countable covering of the $\pi$-base given to us by Proposition~\ref{pi-base} into centered sets, so we are done.
\end{proof}

\begin{thm}\label{small} Assume $\mathrm{add}(\mathcal{N})=\mathrm{non}(\mathcal{M})$. There is a non-separable growth of $\omega$ supporting a measure, which has countable $\pi$-character, and which is scatteredly- and linearly-fibered. In particular, it does not map continuously onto $[0,1]^{\omega_1}$. 
\end{thm}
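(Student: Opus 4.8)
The plan is to rerun Todor\v{c}evi\'{c}'s construction from Section~3 with a family of slaloms chosen so that Theorem~\ref{main} applies. First I would use the hypothesis $\mathrm{add}(\mathcal{N}) = \mathrm{non}(\mathcal{M})$ to invoke Theorem~\ref{kunen-fremlin2} and fix a $\subseteq^*$-chain $\{A_\alpha \colon \alpha < \mathrm{add}(\mathcal{N})\} \subseteq \mathcal{W}$ with the property that for every $S \in \mathcal{S}$ there is $\alpha$ with $A_\alpha \nsubseteq^* S$. I would then put
\[ \mathcal{B} = \{B \in \mathcal{S} \colon B =^* A_\alpha \text{ for some } \alpha < \mathrm{add}(\mathcal{N})\}. \]
Since $\{A_\alpha\}$ is a $\subseteq^*$-chain, a finite union $B_1 \cup \dots \cup B_k$ of members of $\mathcal{B}$ is $=^*$-equal to the member carrying the largest index, so whenever such a union lies in $\mathcal{S}$ it again lies in $\mathcal{B}$; thus $\mathcal{B}$ satisfies the hypothesis of Proposition~\ref{pi-base} and of Theorem~\ref{main}. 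Moreover $\{A_\alpha\} \subseteq \mathcal{B}$, so $\mathcal{B}$ is not $\subseteq^*$-bounded by any slalom in $\mathcal{S}$.

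The key step is to check that $\mathcal{B}$ is destructible by $\mathbb{M}_1 = \mathbb{M}$. Here I would use that each $A_\alpha$ lies in $\mathcal{W}$: by Proposition~\ref{E-destructible}, $\Vdash_\mathbb{M} \check A_\alpha \subseteq^* \dot S$, and since every $B \in \mathcal{B}$ differs from some $A_\alpha$ by a finite set, also $\Vdash_\mathbb{M} \check B \subseteq^* \dot S$. As $\dot S$ is an $\mathbb{M}$-name for a slalom, this says precisely that $\Vdash_\mathbb{M}$ ``$\check{\mathcal{B}}$ is bounded by $\dot{\mathcal{S}}$''; together with the unboundedness of $\mathcal{B}$ noted above, $\mathcal{B}$ is destructible by $\mathbb{M}_1$. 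Theorem~\ref{main} now yields that $\mathfrak{T}_\mathcal{B}/{\rm Fin}$ supports a measure; let $K_\mathcal{B}$ be its Stone space, which in particular is ccc.

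It remains to obtain the topological properties. I would argue that $\mathcal{B}$ plays the same role in Todor\v{c}evi\'{c}'s construction as the family $\mathcal{A}$ of Theorem~\ref{kunen-fremlin}: it is, up to finite modifications and finite unions, generated by an unbounded $\subseteq^*$-chain of slaloms of length the regular uncountable cardinal $\mathrm{add}(\mathcal{N})$. Consequently the proof of Theorem~\ref{todor} (that is, of \cite[Theorem~8.4]{Todorcevic}) goes through verbatim to show that $K_\mathcal{B}$ is a non-separable growth of $\omega$ with countable $\pi$-character which is linearly- and scatteredly-fibered; for the non-separability one may instead simply repeat the argument given right after Theorem~\ref{todor}, using that $\mathrm{add}(\mathcal{N})$ is regular uncountable and that $\bigcup_\alpha A_\alpha \notin \mathcal{S}$. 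Finally, as recorded in Section~2, a scatteredly-fibered space cannot be mapped continuously onto $[0,1]^{\omega_1}$, which gives the last assertion.

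The real content, and the only place the extra axiom enters, is the production of an unbounded $\subseteq^*$-chain sitting inside the narrower family $\mathcal{W} = \mathcal{I} \cap \mathcal{S}$ rather than merely inside $\mathcal{Z}$, since it is exactly the summability condition defining $\mathcal{W}$ that powers Proposition~\ref{E-destructible}; but this has already been done in Theorem~\ref{kunen-fremlin2}. The rest is routine, the one point worth verifying being that the chain supplied by Theorem~\ref{kunen-fremlin2} really does feed into Todor\v{c}evi\'{c}'s argument --- which it does, since that argument uses only the $\subseteq^*$-chain structure and the unboundedness.
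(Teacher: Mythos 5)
Your proposal is correct and follows essentially the same route as the paper: take the chain from Theorem~\ref{kunen-fremlin2}, close under finite modifications to get closure under finite unions, use Proposition~\ref{E-destructible} to see the family is destructible by $\mathbb{M}$, apply Theorem~\ref{main}, and inherit the topological properties from the argument of Theorem~\ref{todor}. The only detail you spell out more explicitly than the paper --- that a finite union of members of the closed family is $=^*$ the member of largest index, and that $=^*$-modifications preserve the $\subseteq^*$-bounding conclusion of Proposition~\ref{E-destructible} --- is correct and is exactly what the paper's phrase ``closure under finite modifications'' is doing implicitly.
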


\begin{proof}
	Let $\mathcal{A}$ be the closure under finite modifications (as long as they belong to $\mathcal{S}$) of a family $(A_\alpha)_{\alpha<\mathrm{add}(\mathcal{N})}\subseteq \mathcal{W}$ as given to us by Theorem \ref{kunen-fremlin2}. It is easy to see that since $\mathcal{A}$ is obtained from a $\subseteq^*$-chain by taking finite modifications (so long as they belong to $\mathcal{S}$), we have that $\mathcal{A}$ is closed under taking finite unions (so long as they belong to $\mathcal{S}$). By Proposition \ref{E-destructible} the family $\mathcal{A}$ is
	destructible by $\mathbb{M}$, so by Theorem \ref{main} the space $K_\mathcal{A}$ supports a measure.

	That $K_\mathcal{A}$ satisfies the other properties listed in
	the statement can be seen in the same way as in Theorem \ref{todor} (i.e., as in \cite[Theorem 8.4]{Todorcevic}), using in particular the fact that $(A_\alpha)_{\alpha<\mathrm{add}(\mathcal{N})}$ is not bounded by any element of $\mathcal{S}$.
\end{proof}

The axiom
$\mathrm{add}(\mathcal{N})=\mathrm{non}(\mathcal{M})$ above can be replaced by a milder assumption, see Remark \ref{non-cov}. 
Note that if $\mathrm{add}(\mathcal{N}) = \omega_1$ and $\mathrm{cov}(\mathcal{N}_{\omega_1})>\omega_1$ (or more generally, $\mathrm{add}(\mathcal{N}) = \kappa < \mathrm{cov}(\mathcal{N}_\kappa)$), then
a space like in the above theorem cannot be constructed using Theorem \ref{kunen-fremlin2}. The reason is that
$\mathrm{cov}(\mathcal{N}_{\omega_1})>\omega_1$ implies that all Boolean algebras of size $\omega_1$ and supporting a measure are $\sigma$-centered (see \cite[Lemma 3.6]{Kamburelis}). 
We do not know the answer to the following question.

\begin{prob}
	Is it consistent that there is no non-separable space supporting a measure which cannot be mapped continuously onto $[0,1]^{\omega_1}$?
\end{prob}

If the answer is negative, then perhaps one can construct a $\mathsf{ZFC}$ example using the techniques presented in this article for some appropriate family $\mathcal{F}\subseteq \mathcal{W}$ which is $\subseteq^*$-unbounded in $\mathcal{S}$. The most difficult part is to find a reason why
the resulting space does not map continuously onto $[0,1]^{\omega_1}$. In the construction of Todor\v{c}evi\'{c} (and our take on it), the $\subseteq^*$-linearity of $\mathcal{A}$ gives the linearly fibered-ness of the spaces, which then satisfy this
property thanks to Tkachenko's Theorem. Also, note that if $\mathrm{non}(\mathcal{N})=\omega_1$ and $\mathrm{cov}(\mathcal{N}_{\omega_1})>\omega_1$, then such a space
cannot have countable $\pi$-character (see \cite[Theorem 5.5]{Pbn-Plebanek}). 

\begin{remark}\label{ms}
The space of Theorem \ref{small} satisfies a slightly stronger property than the lack of continuous mapping onto $[0,1]^{\omega_1}$. Namely, it only carries separable measures. Recall that a measure $\mu$ is \emph{separable} 
if there is a countable family $\mathcal{A}$ of measurable sets such that for every measurable $E$ 
\[ \inf\{\mu(E \triangle A)\colon A\in \mathcal{A}\} = 0. \]
Every space which can be mapped continuously
onto $[0,1]^{\omega_1}$ carries a non-separable measure and the reverse implication does not hold in general (see \cite{Fremlin}). The fact that the space of Theorem \ref{small} does not carry a non-separable measures follows directly
from \cite[Theorem 3.1]{Drygier} which says that scatteredly-fibered spaces only carry separable measures.
\end{remark}

Denote by $\mathcal{E}$ the $\sigma$-ideal on $2^\omega$ generated by the closed sets of measure zero. In \cite{Drygier-Plebanek} the authors proved that if $\mathrm{cf}(\mathrm{cov}(\mathcal{E})^\omega)< \mathfrak{b}$, then there is a non-separable growth of $\omega$ supporting a measure. Using the above technology we can prove that such a space exists in $\mathsf{ZFC}$. We can demand that the space, contrary to that of Theorem \ref{small}, is quite big in the combinatorial sense, that is, it maps continuously onto $[0,1]^\mathfrak{c}$.

\begin{thm}\label{growth} There is a non-separable growth $K$ of $\omega$ which supports a measure and which can be mapped continuously onto $[0,1]^\mathfrak{c}$.
\end{thm}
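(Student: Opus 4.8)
The plan is to find a family $\mathcal{B} \subseteq \mathcal{S}$ which is closed under finite unions (so long as they stay in $\mathcal{S}$), which is destructible by some measure algebra $\mathbb{M}_\kappa$, and which additionally contains an uncountable — in fact size-$\mathfrak{c}$ — independent family in the resulting Boolean algebra $\mathfrak{T}_\mathcal{B}/{\rm Fin}$. Once we have destructibility, Theorem~\ref{main} gives us that $\mathfrak{T}_\mathcal{B}/{\rm Fin}$ supports a measure, and the Stone space $K_\mathcal{B}$ is a growth of $\omega$ by the same argument as in Theorem~\ref{todor}(1). Non-separability will follow either from the presence of the independent family (an algebra with an uncountable independent family is not $\sigma$-centered) or from a Kunen--Fremlin-style unboundedness argument as in the proof of Theorem~\ref{small}. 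The continuous surjection onto $[0,1]^\mathfrak{c}$ is exactly equivalent, by the basic fact recalled in Section~2, to the existence of an independent family of size $\mathfrak{c}$ in $\mathfrak{T}_\mathcal{B}/{\rm Fin}$.

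The key step is to exhibit both ingredients simultaneously. A natural candidate: take $\mathcal{X} = \prod_n 2^n$ and for $f \in \mathcal{X}$ set $W_f = \{(n, f(n)) \colon n > 0\} \in \mathcal{W}$, and let $\mathcal{B}$ be the closure under finite unions (inside $\mathcal{S}$) of $\{W_f \colon f \in \mathcal{X}\}$ for a suitably chosen subfamily — or, to get size $\mathfrak{c}$, of all of $\mathcal{X}$, or of some almost disjoint family of such $f$'s of size $\mathfrak{c}$. Since each $W_f \in \mathcal{W}$ and $\mathcal{W}$ is destructible by $\mathbb{M}$ (Proposition~\ref{E-destructible}), and since finite unions of elements of $\mathcal{W}$ that remain in $\mathcal{S}$ are again in $\mathcal{W}$, the whole family $\mathcal{B}$ is destructible by $\mathbb{M}$; so $\kappa = 1$ will do, and Theorem~\ref{main} applies. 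For the independent family, one chooses $\{f_\xi \colon \xi < \mathfrak{c}\} \subseteq \mathcal{X}$ so that the corresponding clopen sets $T_{W_{f_\xi}}$ (or rather their images modulo ${\rm Fin}$) behave independently: concretely, pick the $f_\xi$'s from an independent family of branches so that for any two finite disjoint index sets $F_0, F_1$ the intersection $\bigcap_{\xi \in F_0} T_{W_{f_\xi}} \cap \bigcap_{\xi \in F_1} (T_{W_{f_\xi}})^c$ is infinite. Unwinding the definition of $T_A$, membership of $(T,n)$ in $T_{W_{f_\xi}}$ says $f_\xi(m) \in T(m)$ for all $m < n$, $m > 0$; so one needs, for arbitrarily large $n$, a slalom $T \subseteq n \times 2^n$ containing $f_\xi(m)$ for $\xi \in F_0$ and omitting some $f_\xi(m)$ for $\xi \in F_1$ at each level $m < n$ — and this is arrangeable precisely because at each level $m$ there are $2^m$ slots, the slalom $T(m)$ may have up to $2^m - 1$ of them, and an independent/almost disjoint choice of the $f_\xi$ guarantees the required agreements and disagreements occur cofinally in $m$.

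I would organize the write-up as: (i) fix the family $\mathcal{B}$ and verify closure under finite unions in $\mathcal{S}$; (ii) invoke Proposition~\ref{E-destructible} and closure to conclude destructibility by $\mathbb{M}$, then Theorem~\ref{main} for support of a measure; (iii) the growth-of-$\omega$ claim via the argument of Theorem~\ref{todor}(1); (iv) construct the size-$\mathfrak{c}$ independent family of the $T_{W_{f_\xi}}$'s modulo ${\rm Fin}$, which yields both non-separability and the continuous surjection onto $[0,1]^\mathfrak{c}$.

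The main obstacle is step (iv): making the clopen sets $T_{W_{f_\xi}}$ genuinely independent in $\mathfrak{T}_\mathcal{B}/{\rm Fin}$. The subtlety is that $T_{W_f} \supseteq T_{W_g}$ whenever $g$'s graph eventually contains $f$'s, and more generally the combinatorics of the $T_A$'s is governed by $\subseteq^*$, not equality, so one must choose the branches $f_\xi$ so that no nontrivial Boolean combination collapses modulo finite sets. I expect this to require a careful recursion or a clever use of a ground-model independent family on $\omega$ transported to $\mathcal{X}$ via the level structure $\{m\} \times 2^m$, together with the genericity of the witnessing slaloms $(T,n)$ — the point being that $\Omega$ is rich enough to realize every finite pattern of memberships cofinally, which is what independence modulo ${\rm Fin}$ demands.
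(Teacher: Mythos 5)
Your overall architecture (a union-closed, $\mathbb{M}$-destructible family $\mathcal{B}\subseteq\mathcal{W}$ fed into Theorem \ref{main}, plus a size-$\mathfrak{c}$ independent family in $\mathfrak{T}_\mathcal{B}/{\rm Fin}$ for the surjection onto $[0,1]^{\mathfrak{c}}$) is the same as the paper's, which simply takes $\mathcal{B}=\mathcal{W}$. But your treatment of non-separability contains an outright error: an uncountable (even size-$\mathfrak{c}$) independent family does \emph{not} imply failure of $\sigma$-centredness. The cube $[0,1]^{\mathfrak{c}}$ is separable (Hewitt--Marczewski--Pondiczery), and $\mathcal{P}(\omega)$ is a $\sigma$-centred algebra containing an independent family of size $\mathfrak{c}$; indeed the tension between ``maps onto a big cube'' and ``separable'' is precisely the subject of the paper, so these cannot be conflated. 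Your fallback, a ``Kunen--Fremlin-style unboundedness argument,'' does not apply either: that argument needs a $\subseteq^*$-increasing unbounded chain and the regularity of $\mathrm{add}(\mathcal{N})$, neither of which your family of branches provides. Worse, the two goals pull against each other in your setup: for \emph{every} finite subfamily of your branches $f_\xi$ to have $\bigcap_{\xi\in F_0}T_{W_{f_\xi}}$ infinite, no finite set of branches may cover a level $2^m$, and since each level is finite this forces all the branches into a single slalom $S$. But then every generator $T_{W_{f_\xi}}$ contains the infinite set $T_S$, the $\pi$-base of Proposition \ref{pi-base} splits into countably many centred pieces (indexed by $(T,n)$ and the finite trace below level $n$), and the resulting algebra is $\sigma$-centred --- i.e.\ the space is separable. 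So if you take only an independent-ish family of branches as generators, the construction fails.

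The missing idea is to decouple the two requirements. The paper takes $\mathcal{B}$ to be \emph{all} of $\mathcal{W}$ (your ``all of $\mathcal{X}$'' option, suitably closed, would also work): non-separability then follows from a diagonalization, not from independence --- if $\{T_W\colon W\in\mathcal{B}\}=\bigcup_n\mathcal{C}_n$ with each $\mathcal{C}_n$ centred, then each $C_n=\bigcup\{W\colon T_W\in\mathcal{C}_n\}$ is itself a slalom (otherwise some finite subfamily would fill a level and have finite intersection, as in the proof of Theorem \ref{todor}), and one picks a branch $g$ with $g(n)\notin C_n(n)$ for all $n$, whose $W_g$ lies in $\mathcal{B}$ but in no $C_n$. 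The independent family is then a \emph{separate}, deliberately ``small'' subfamily sitting inside one fixed $S\in\mathcal{W}$ with $|S(n)|\geq 2$: the paper fixes disjoint nonempty $Z^n_0,Z^n_1\subseteq S(n)$ and an independent family $\{X_\alpha\}$ on $\omega$, and sets $S_\alpha(n)=Z^n_1$ if $n\in X_\alpha$ and $Z^n_0$ otherwise; independence of the $T_{S_\alpha}$ modulo $\rm{Fin}$ is then verified by exhibiting, for each finite positive/negative pattern, a $T\subseteq S$ witnessing it at cofinally many levels. Your eventually-different branches avoiding a common slot at each level would serve the same purpose, but you must present this as a subfamily of a larger, diagonalizable generating family, and you must replace your non-separability argument with the diagonalization above.
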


\begin{proof}
	Consider $\mathcal{W}$ as in the previous section. It is clear from the definition that $\mathcal{W}$ is closed under finite unions (so long as they belong to $\mathcal{S}$). As before, it follows that $K_\mathcal{W}$ supports a measure. We will show that it is not separable. Towards a contradiction, assume that it is, so in particular we get a countable covering $\{T_W\colon W\in \mathcal{W}\} = \bigcup_n \mathcal{C}_n$ where each $\mathcal{C}_n$ is centered.
	Therefore, for each $n$ the set $C_n = \bigcup \mathcal{C}_n \in \mathcal{S}$ (see the proof of Theorem \ref{todor}), and further, for every $W$ such that $T_W\in \mathcal{C}_n$, $W \subseteq C_n$. Let $f\in \mathcal{X}$ be such that $f(n)\not\in C_n(n)$ for each $n$. Clearly, $f\cap([1,\infty)\times \omega)\in \mathcal{W}$, but is not contained in $C_n$ for any $n$, a contradiction.

	We will finish the proof by showing that $\mathfrak{T}_\mathcal{W}/{\rm Fin}$ contains an independent family of size $\mathfrak{c}$, which clearly suffices.
	Let $\{X_\alpha \colon \alpha < \con\}$ be subsets of $\omega$ such that they are representatives of an independent family in $\mathcal{P}(\om)/{\rm Fin}$ (see \cite{Kantorovich}, \cite{Geschke}). Note that this in particular implies that all of them are infinite. Let $S \in \mathcal{W}$ be such that for every $n>1$, $|S(n)| \geq 2$ (and $S(0)=S(1)=\emptyset$). Also, for each $n>1$, let $Z^n_0, Z^n_1$ be non-empty pairwise disjoint subsets of $S(n)$. For each $\alpha < \con$, we shall define $S_\alpha \in \mathcal{W}$ as follows:
	
$$
S_\alpha(n) = 
\begin{cases}
Z^n_1 \mbox{ if }n\in X_\alpha, \\
Z^n_0 \mbox{ otherwise.}
\end{cases}
$$	

It is clear that each $S_\alpha$ is contained in $S$ (and hence is in $\mathcal{W}$) and is infinite. The following claim will then complete the proof of the theorem.

	\begin{claim} The family $\{ T_{S_\alpha} \colon \alpha < \con\}/{\rm Fin}$ is an independent family of $\mathfrak{T}_\mathcal{W}/{\rm Fin}$.
\end{claim}
\begin{why}
Let $\alpha_1, \ldots, \alpha_m, \beta_1,\ldots, \beta_n$ be pairwise distinct ordinals less than $\con$. We need to show that 
\[|\bigcap_{1\leq i \leq m}T_{S_{\alpha_i}}\cap \bigcap_{1\leq j\leq m}(T_{S_{\beta_j}})^c| =\azero.\]
Now, since $\{X_\alpha \colon \alpha < \con\}/{\rm Fin}$ is an independent family, we can find an infinite $Y\subseteq \om$ such that 
\[Y \subseteq \bigcap_{1\leq i \leq m}X_{\alpha_i}\cap \bigcap_{1\leq j\leq m}(X_{\beta_j})^c.\]
Then, let $T\subseteq \om \times \om$ be defined as follows: 

$$
T(n) = 
\begin{cases}
Z^n_1 \mbox{ if }n\in Y, \\
S(n) \mbox{ otherwise.}
\end{cases}
$$	

Notice that $T$ is infinite, and also, since $T \subseteq S$, that $T \in \mathcal{W}$. Also, for $1 \leq i \leq m$, $S_{\alpha_i} \subseteq T$, since the only $n < \om$ when $T(n) \ne S(n)$ are the $n \in Y$, in which case $T(n) = S^n_1 = S_{\alpha_i}(n)$ since $Y \subseteq X_{\alpha_i}$. But also, since $Y \subseteq (X_{\beta_j})^c$ for each $1 \leq j \leq n$, we have that for $n \in Y$, $T(n) \cap S_{\beta_j}(n) = \emptyset$, with the latter set being non-empty. 

It follows that if $l< \om$ is the least element of $Y$, then for every $k > l$, 
\[(T \cap (k\times 2^k), k) \in \bigcap_{1\leq i \leq m}T_{S_{\alpha_i}}\cap \bigcap_{1\leq j\leq m}(T_{S_{\beta_j}})^c,\]
thus yielding that the latter set is infinite and finishing the proof of the claim.
\end{why}
\end{proof}

\begin{remark}\label{T^*2} Actually, the above proof shows that the algebra $\mathfrak{T}^*_\mathcal{W}/{\rm Fin} = {\rm alg}(\{T_W\colon W\in \mathcal{W}\})/{\rm Fin}$ is not $\sigma$-centered (cf. Remark \ref{T^*}).
\end{remark}

\begin{remark}
After we carried out the above construction Tomasz \.{Z}uchowski presented another example of a Boolean algebra $\mathfrak{A}$ which supports a
	measure $\mu$, which  is not $\sigma$-centered and 
	such that there is an embedding $\varphi\colon \mathfrak{A} \to \mathcal{P}(\omega)/Fin$ (see \cite{Zuchowski}). His construction is quite different to ours and has the additional property that $\varphi$ transfers $\mu$ to the density $d$ (i.e., $\mu(A) = d(\varphi(A))$ for each $A\in
	\mathfrak{A}$). 
\end{remark}

\subsection{$c_0$-complementedness}

A closed subspace $Y$ of a Banach space $X$ is \emph{complemented in} $X$ if there is a projection $p\colon X\to X$ such that $p[X]=Y$. Since many Banach spaces have a copy of $c_0$ as a subspace, the question of which of these copies are
complemented is a natural one and was considered by many authors. One of the most important results in this topic is that of Sobczyk: if $X$ is separable, then each copy of $c_0$ in $X$ is complemented (\cite{Sobczyk}). On the other hand, a Banach
space $X$ is Grothendieck (i.e., $X^*$ does not contain weakly$^*$ convergent sequences which are not weakly convergent) if and only if no copy of $c_0$ in $X$ is complemented (see \cite{Cembranos84}). 

If $K$ is a compactification of $\omega$, then the space
\[ \{f\in C(K)\colon f(x)=0 \mbox{ for }x\in K\setminus \omega\} \]
forms a copy of $c_0$ in $C(K)$. We will call it the \emph{natural copy of} $c_0$ in $C(K)$. In \cite{Drygier-Plebanek} the authors discus when this natural copy is complemented in $C(K)$. If $K$ is metrizable, then $C(K)$ is separable and so, by
Sobczyk's theorem, every copy of $c_0$ in $C(K)$ is complemented. The following result implies that there are many compactifications $K$ of $\omega$ such that the natural copy of $c_0$ is not complemented in $C(K)$.

\begin{lem}\cite[Lemma 3.1]{Drygier-Plebanek}\label{drygier}
	Let $\mathfrak{A}$ be a subalgebra of $\mathcal{P}(\omega)$ containing all finite subsets and let $K$ be its Stone space. The following conditions are equivalent:
	\begin{enumerate}
		\item the natural copy of $c_0$ is complemented in $C(K)$;
		\item there is a sequence of measures $(\nu_n)_n$ on $\mathfrak{A}$ such that each $\nu_n$ vanishes on finite subsets and 
	\[ \lim_{n\to\infty} \nu_n(A) - \delta_n(A) = 0 \]
	for every $A\in\mathfrak{A}$
	\end{enumerate}
\end{lem}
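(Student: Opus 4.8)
The plan is to prove the equivalence of the two conditions by a direct analysis of the restriction map $C(K)\to C(K\setminus\omega)$ and its dual. Write $c_0$ for the natural copy of $c_0$ in $C(K)$, i.e.\ the functions vanishing on the growth; note that $c_0$ is exactly the closure (indeed the set) of finitely supported functions on $\omega$, and that $C(K)/c_0$ is canonically isometric to $C(K\setminus\omega)$ via restriction. A projection $p\colon C(K)\to C(K)$ with $p[C(K)]=c_0$ and $p\har c_0=\id$ is, by standard Banach space duality, the same datum as a bounded linear lifting of the quotient, or equivalently as a sequence of functionals on $C(K)$ implementing ``evaluation at $n$ modulo something vanishing on the growth.'' The idea is to extract from such a $p$ the measures $\nu_n$, and conversely to build $p$ from the $\nu_n$.

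First I would do the direction $(1)\Rightarrow(2)$. Suppose $p\colon C(K)\to C(K)$ is a projection onto $c_0$. For each $n\in\omega$ the coordinate functional $e_n^*\colon c_0\to\er$, $f\mapsto f(n)$, is continuous, so $\mu_n:=e_n^*\circ p$ is a bounded functional on $C(K)$, hence (Riesz) a signed Radon measure on $K$ of bounded norm (here we use that $\|p\|<\infty$, so $\sup_n\|\mu_n\|<\infty$). Decompose $\mu_n=\nu_n+\rho_n$ where $\nu_n$ is the part living on $\omega$ (a measure on the discrete set $\omega$, equivalently a functional on $\mathfrak A$ vanishing on finite sets once we pass to the relevant piece) — more precisely, one sets $\nu_n(A)=\mu_n(A)$ for $A\in\mathfrak A$, which automatically vanishes on finite subsets provided one first replaces $p$ by $p'=p-(\text{the finite-rank map }f\mapsto\sum_k f(k)\langle\text{something}\rangle)$; the cleanest route is to note $p$ restricted to $c_0$ is the identity, forcing $\mu_n(\{k\})=\delta_{nk}$, so the ``atomic on $\omega$'' part of $\mu_n$ is exactly $\delta_n$. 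Then for a clopen $A\in\mathfrak A$, testing $p$ against the continuous characteristic function $\chi_A$ and using that $p(\chi_A)\in c_0$ together with $\chi_A\har\omega\in\ell^\infty$, one gets $\nu_n(A)-\delta_n(A)=\mu_n(\chi_A)-\delta_n(A)\to 0$ as $n\to\infty$, because $p(\chi_A)\in c_0$ means its values tend to $0$ while the computation shows $e_n^*(p(\chi_A))=\nu_n(A)-\delta_n(A)+(\text{contribution of the growth part, which I must show is }o(1))$. Controlling that growth contribution uniformly in $A$ is where the boundedness of $p$ is used; one argues that $\chi_A-\delta_{\text{-related correction}}$ has controlled norm and that $p$ of a bounded net concentrated near the growth goes to $0$ coordinatewise.

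For $(2)\Rightarrow(1)$, given $(\nu_n)_n$ with $\sup_n\|\nu_n\|<\infty$ (which can be arranged, or follows from a Banach–Steinhaus argument using that the limit condition holds for every $A$ and the clopen sets span a dense subspace of $C(K)$), define $p\colon C(K)\to c_0$ by $p(f)(n)=f(n)-\int_K f\,d\nu_n$ for $n\in\omega$ and $p(f)\har(K\setminus\omega)=0$. One must check: (a) $p(f)\in c_0$, i.e.\ $p(f)(n)\to 0$, which for $f=\chi_A$ is exactly hypothesis $(2)$, and then extends to all $f\in C(K)$ by density of $\operatorname{alg}(\mathfrak A)=C(K)$-uniform-closure and uniform boundedness of the maps $f\mapsto p(f)(n)$; (b) $p$ is bounded, again from $\sup_n\|\nu_n\|<\infty$; (c) $p\har c_0=\id$, which holds because if $f\in c_0$ then $f$ is supported on finitely many points (up to $\eps$) and each $\nu_n$ vanishes on finite sets, so $\int f\,d\nu_n\to 0$ appropriately and the correction term is negligible — more carefully, for finitely supported $f$ one has $\int f\,d\nu_n=\sum_{k}f(k)\nu_n(\{k\})=0$, giving $p(f)=f$ exactly, and the general case follows by continuity. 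Hence $p$ is a projection of $C(K)$ onto $c_0$.

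The main obstacle is the bookkeeping in $(1)\Rightarrow(2)$: isolating the part of $\mu_n=e_n^*\circ p$ that lives on $\omega$ from the part that lives on the growth $K\setminus\omega$, and showing the growth part contributes $o(1)$ uniformly over clopen sets. The conceptually clean way to handle this is to work with the quotient: since $p$ is a projection onto $c_0=\ker(\text{restriction to }K\setminus\omega)$, the operator $I-p$ factors through $C(K\setminus\omega)$ and provides a bounded linear lifting $L\colon C(K\setminus\omega)\to C(K)$ of the restriction map; then $\mu_n - \delta_n = (e_n^*\circ p) - (e_n^*|_{c_0}\text{-extension})$ is precisely $e_n^*\circ(I - p)\circ(\text{restriction})$ composed appropriately, and the statement $p(\chi_A)\in c_0$ translates directly into the limit condition. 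I expect that once the quotient picture is set up, everything else is a routine application of Riesz representation, density of locally constant functions, and the uniform boundedness principle; but writing it so that ``$\nu_n$ vanishes on finite subsets'' is manifest (rather than merely asymptotic) requires the small observation that $p\har c_0=\id$ pins down $\mu_n$ on singletons to be $\delta_{nk}$, so its $\omega$-atomic tail beyond coordinate $n$ can be absorbed.
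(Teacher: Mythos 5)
The paper does not actually prove this lemma --- it is quoted verbatim from Drygier--Plebanek --- so your proposal has to stand on its own. Its architecture (Riesz representation of the coordinate functionals of the projection in one direction; the formula $p(f)(n)=f(n)-\int f\,d\nu_n$ in the other) is the standard and correct one, but two steps as written do not go through. In $(1)\Rightarrow(2)$ you never commit to a definition of $\nu_n$, and the candidate you do write down, $\nu_n(A)=\mu_n(A)=p(\chi_A)(n)$, is wrong: it does not vanish on finite sets (e.g.\ $\mu_n(\{n\})=1$), and $\mu_n(A)-\delta_n(A)$ need not converge. The decomposition of $\mu_n$ into an ``$\omega$-part'' and a ``growth part'', and the ensuing worry about controlling the growth contribution uniformly in $A$, is a phantom difficulty. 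The correct definition --- which your closing paragraph gestures at via $I-p$ but never executes --- is $\nu_n(A)=\bigl((I-p)\chi_A\bigr)(n)=\chi_A(n)-p(\chi_A)(n)$, i.e.\ $\nu_n=(\delta_n-\mu_n)\har\mathfrak A$. This is finitely additive with $|\nu_n(A)|\leq\|I-p\|$ for all $A$ (hence of bounded variation); it vanishes on finite $F$ because $\chi_F$ lies in the natural copy of $c_0$ and $p\har c_0=\id$; and $\nu_n(A)-\delta_n(A)=-p(\chi_A)(n)\to 0$ precisely because $p(\chi_A)\in c_0$. No uniform-in-$A$ estimate and no splitting of $\mu_n$ is needed. (Note that this direction produces \emph{signed} measures, a point on which the statement as reproduced here is silent.)

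In $(2)\Rightarrow(1)$ the genuine gap is the uniform bound $\sup_n\|\nu_n\|<\infty$, on which everything else in that direction (boundedness of $p$, the density argument for $p(f)(n)\to 0$, and $\int f\,d\nu_n=0$ for $f\in c_0$) depends. Your appeal to Banach--Steinhaus fails as stated: pointwise boundedness of the functionals $f\mapsto\int f\,d\nu_n$ on the dense but non-closed linear span of $\{\chi_A\colon A\in\mathfrak A\}$ does not yield uniform boundedness; the uniform boundedness principle requires pointwise boundedness on a set of second category, and the relevant Nikodym-type boundedness theorem for finitely additive measures holds on $\sigma$-algebras, not on arbitrary subalgebras of $\mathcal P(\omega)$. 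If, as in this paper's conventions and applications, the $\nu_n$ are non-negative, the bound is immediate and elementary: applying the hypothesis to $A=\omega$ gives $\nu_n(\omega)\to\delta_n(\omega)=1$, so $\|\nu_n\|=\nu_n(\omega)$ is bounded. If signed measures are intended, the uniform bound must be added to the hypothesis (as it is in Drygier--Plebanek's formulation) or argued by other means; it is not free. With that bound in hand, the remaining verifications you sketch --- that a sequence tending to $0$ on $\omega$ extends by $0$ to an element of the natural copy of $c_0$, and that $p\har c_0=\id$ --- are correct and routine.
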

Theorem \ref{drygier} implies, in particular, the following fact, which is attributed to Kubi{\'s} in \cite{Drygier-Plebanek}.

\begin{cor}\label{sp-measure}
If $K$ is a compactification of $\omega$ and the natural copy of $c_0$ is complemented in $C(K)$, then $K\setminus \omega$ supports a measure.
\end{cor}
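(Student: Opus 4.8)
The plan is to derive Corollary~\ref{sp-measure} from the equivalence in Lemma~\ref{drygier} by showing that condition (2) there produces a strictly positive measure on the growth $K \setminus \omega$. So suppose the natural copy of $c_0$ is complemented in $C(K)$, and let $(\nu_n)_n$ be the sequence of measures on $\mathfrak{A}$ furnished by Lemma~\ref{drygier}(2): each $\nu_n$ vanishes on finite sets and $\lim_{n\to\infty} (\nu_n(A) - \delta_n(A)) = 0$ for every $A \in \mathfrak{A}$. Passing to the Stone space, each $\nu_n$ extends to a Radon measure on $K$; since $\nu_n$ vanishes on finite subsets of $\omega$ and $\omega$ is dense, the measure $\nu_n$ actually lives on $K \setminus \omega$ (more precisely, $\nu_n$ gives zero mass to each isolated point, hence to the open set $\omega$, hence is concentrated on the closed set $K \setminus \omega$). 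Thus we may view each $\nu_n$ as a measure on $K \setminus \omega$.

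Next I would manufacture a single measure out of the sequence $(\nu_n)_n$. The measures $\nu_n$ are uniformly bounded: indeed, since $\nu_n(A) - \delta_n(A) \to 0$ pointwise on the Boolean algebra and the Boolean algebra generates $C(K)$, a Banach--Steinhaus / Nikod\'ym boundedness argument (or just the observation that $\nu_n(A) \to \delta_n(A)$ forces $\nu_n(\omega)$ and $\nu_n(K\setminus\omega)$ to stabilize) gives $\sup_n \|\nu_n\| < \infty$. Then put
\[ \mu = \sum_{n} 2^{-n} |\nu_n|, \]
a finite nonnegative Radon measure on $K \setminus \omega$. It remains to check that $\mu$ is strictly positive, i.e.\ $\mu(U) > 0$ for every nonempty open $U \subseteq K \setminus \omega$. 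By Stone duality it suffices to check this for $U$ of the form $\widehat{A} \cap (K \setminus \omega)$ with $A \in \mathfrak{A}^+$ infinite (the clopen sets of this form are a base for $K \setminus \omega$, and a nonempty clopen subset of the growth corresponds to an infinite $A \in \mathfrak{A}$). Fix such an $A$, infinite. Since $\delta_n(A) = 1$ for the infinitely many $n \in A$, and $\nu_n(A) - \delta_n(A) \to 0$, we get $\nu_n(A) \to 1$ along $n \in A$; in particular $\nu_{n_0}(A) > 1/2 > 0$ for some $n_0 \in A$, so $|\nu_{n_0}|(A) > 0$, and identifying $A \in \mathfrak{A}$ with its clopen hull in $K$ we have $|\nu_{n_0}|(\widehat A) > 0$. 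Since $|\nu_{n_0}|$ is concentrated on $K \setminus \omega$, this means $|\nu_{n_0}|(\widehat A \cap (K \setminus \omega)) > 0$, whence $\mu(U) \geq 2^{-n_0} |\nu_{n_0}|(U) > 0$. Thus $\mu$ witnesses that $K \setminus \omega$ supports a measure.

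The one genuinely delicate point — and the step I would spell out most carefully — is the passage between a measure $\nu_n$ on the subalgebra $\mathfrak{A} \subseteq \mathcal P(\omega)$ and its behaviour as a Radon measure on the Stone space $K$, together with the identification of clopen subsets of $K \setminus \omega$ with (equivalence classes of) infinite sets in $\mathfrak{A}$. One must be careful that a basic clopen set $\widehat A$ of $K$ meets $K \setminus \omega$ in a \emph{nonempty} set precisely when $A$ is infinite, and that $\nu_n$ assigns no mass to the countable open discrete set $\omega$ (this is exactly the hypothesis that $\nu_n$ vanishes on finite subsets, combined with $\sigma$-additivity of the Radon extension). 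Everything else — uniform boundedness of the $\nu_n$, forming the weighted sum, strict positivity — is then routine. Note that finite additivity of the $\nu_n$ is all we assumed, but as recalled in Section~2 every finitely additive measure on a Boolean algebra extends uniquely to a $\sigma$-additive Radon measure on the Stone space, so there is no issue in speaking of $\mu$ as a genuine (Radon) measure supported by $K \setminus \omega$.
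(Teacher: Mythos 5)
Your proof is correct and follows essentially the same route as the paper: both take the sequence $(\nu_n)_n$ from Lemma~\ref{drygier}, form the weighted sum $\sum_n 2^{-(n+1)}\nu_n$, and observe that for infinite $A$ the condition $\nu_n(A)-\delta_n(A)\to 0$ forces $\nu_n(A)$ to be bounded away from $0$ for infinitely many $n\in A$, so the sum is strictly positive on $\mathfrak{A}/{\rm Fin}$ and hence on $K\setminus\omega$. Your additional remarks (taking $|\nu_n|$, uniform boundedness via Nikod\'ym) only make explicit details the paper leaves implicit.
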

\begin{proof} If $\mathfrak{A}\subseteq P(\omega)$
	is such that $K$ is its Stone space and $(\nu_n)_n$ is a sequence as in Theorem \ref{drygier}, then
$\nu$ given by
\[ \nu(A) = \sum_n \frac{1}{2^{n+1}}\nu_n(A), \]
for $A\in \mathfrak{A}$, is a measure on $\mathfrak{A}$ vanishing only on finite sets. Therefore, $\nu$ induces a strictly positive measure on $\mathfrak{A}/{\rm Fin}$ which can be extended further to a (strictly positive) measure on $K\setminus
\omega$. 
\end{proof}

In \cite[Theorem 5.1]{Drygier-Plebanek} the authors show that under $\mathsf{CH}$ there is a non-separable space $K$, a compactification of $\omega$ with a growth supporting a measure, such that the natural copy of $c_0$ in $C(K)$ is complemented. We will show that such a space exists in $\mathsf{ZFC}$. 

The space will be similar to $K_\mathcal{W}$ but this time the generators of the form $T_{(S,n)}$ will be a little bit cumbersome. So, we will consider the algebra $\mathfrak{T}^*_\mathcal{W}$, instead of $\mathfrak{T}_\mathcal{W}$. We make the
simple observation that this algebra contains every finite subset of $\Omega$. We also note that if $W \in \mathcal{W}$, then $T_W$ is infinite, and that if $W \not = W'$ are in $\mathcal{W}$, then $T_W \triangle T_{W'}$ is infinite.

Let $\dot{U}$ be an $\mathbb{M}$-name for a slalom. Define a function $f_{\dot{U}}\colon \{T_W\colon W\in \mathcal{W}\}/{\rm Fin} \to \mathbb{M}$ in the following way:
\[ f_{\dot{U}}([T_W]) = \llbracket \check{W} \subseteq \dot{U} \rrbracket. \]
Here $[T_W] = \{A\subseteq \Omega \colon A=^* T_W\}$. We will show that it can be extended to a homomorphism.

\begin{prop}\label{homo}
	For each $\mathbb{M}$-name $\dot{U}$ for a slalom the function $f_{\dot{U}}$ can be extended to a homomorphism $\varphi_{\dot{U}} \colon \mathfrak{T}^*_\mathcal{W}/{\rm Fin} \to \mathbb{M}$. 
\end{prop}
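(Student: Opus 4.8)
The plan is to invoke the standard criterion for extending a function defined on a set of generators of a Boolean algebra to a homomorphism. Since $\mathfrak{T}^*_\mathcal{W}/{\rm Fin}$ is by definition generated by $G = \{[T_W] \colon W \in \mathcal{W}\}$ (where $[X]$ denotes the class of $X \subseteq \Omega$ modulo finite sets), it suffices to check that (i) $f_{\dot U}$ is well defined on $G$, and (ii) for all finite tuples $W_0,\dots,W_{k-1}$ and $V_0,\dots,V_{l-1}$ from $\mathcal{W}$, if the set
\[ M \ = \ \bigcap_{i<k} T_{W_i} \ \cap \ \bigcap_{j<l} (\Omega \setminus T_{V_j}) \]
is finite, then $\bigwedge_{i<k} f_{\dot U}([T_{W_i}]) \wedge \bigwedge_{j<l} \neg f_{\dot U}([T_{V_j}]) = \emptyset$ in $\mathbb{M}$. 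Condition (i) is immediate: it was already noted that $T_W \triangle T_{W'}$ is infinite whenever $W \neq W'$, so $W \mapsto [T_W]$ is injective. Throughout I will use the identity $\bigcap_{i<k} T_{W_i} = T_W$ for $W := \bigcup_{i<k} W_i$ (which holds since $T_{A\cup B} = T_A \cap T_B$), together with $\llbracket \check W \subseteq \dot U \rrbracket = \bigwedge_{i<k} \llbracket \check W_i \subseteq \dot U \rrbracket$, valid because $\bigcup_{i<k} W_i \subseteq U$ iff $W_i \subseteq U$ for all $i<k$; in particular $\bigwedge_{i<k} f_{\dot U}([T_{W_i}]) = \llbracket \check W \subseteq \dot U \rrbracket$.

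The heart of the matter is to decide when $M$ is finite; write $W = \bigcup_{i<k} W_i$, and note $W(n) \subseteq 2^n$ for all $n$, though $W$ need not be a slalom. \textbf{Case 1.} Suppose $W \notin \mathcal{S}$, say $\check W(m_0) = 2^{m_0}$ for some $m_0$. Then any $(T,n) \in T_W$ with $n > m_0$ would force $T(m_0) \supseteq W(m_0) = 2^{m_0}$, contradicting $T \in \mathcal{S}$, so $T_W$ is finite, and hence so is $M \subseteq T_W$, automatically. On the other hand $\dot U$ is forced to be a slalom, so $\Vdash_\mathbb{M} |\dot U(m_0)| < 2^{m_0}$, hence $\Vdash_\mathbb{M} 2^{m_0} = \check W(m_0) \nsubseteq \dot U(m_0)$, so $\llbracket \check W \subseteq \dot U \rrbracket = \emptyset$ and therefore $\bigwedge_{i<k} f_{\dot U}([T_{W_i}]) = \emptyset$, which makes the meet in (ii) equal $\emptyset$. \textbf{Case 2.} Suppose $W \in \mathcal{S}$, so $W \in \mathcal{W}$ and $T_W$ is infinite. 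Here I claim $M$ is finite iff $M = \emptyset$ iff $V_{j_0} \subseteq W$ for some $j_0 < l$. If $V_{j_0} \subseteq W$ then $T_W \subseteq T_{V_{j_0}}$, so $M = \emptyset$. Conversely, if no $V_j$ lies in $W$, pick $(k_j,\ell_j) \in V_j \setminus W$ for each $j<l$; then for every $n > \max_{j<l} k_j$ the set $T := W \cap (n \times 2^n)$ is a slalom (its sections below $n$ are those of $W$), so $(T,n) \in \Omega$, while $(T,n) \in T_W \subseteq \bigcap_{i<k} T_{W_i}$ and, for each $j<l$, $(k_j,\ell_j) \in V_j \cap (n\times 2^n) \setminus T$ (using $\ell_j < 2^{k_j} \le 2^n$ since $V_j \in \mathcal{I}$) shows $(T,n) \notin T_{V_j}$; hence $(T,n) \in M$ for infinitely many $n$. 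So in Case 2, if $M$ is finite then $V_{j_0} \subseteq W$ for some $j_0$, whence $\llbracket \check W \subseteq \dot U \rrbracket \leq \llbracket \check V_{j_0} \subseteq \dot U \rrbracket$ and consequently
\[ \bigwedge_{i<k} f_{\dot U}([T_{W_i}]) \ \wedge \ \bigwedge_{j<l} \neg f_{\dot U}([T_{V_j}]) \ \leq \ \llbracket \check W \subseteq \dot U \rrbracket \wedge \neg\llbracket \check V_{j_0} \subseteq \dot U \rrbracket \ = \ \emptyset. \]

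This verifies (ii) in all cases (the degenerate possibilities $k=0$ and $l=0$ need no extra care: when $k=0$ we are in Case 2 with $W=\emptyset$, and when $l=0$ finiteness of $M=T_W$ already puts us in Case 1), so $f_{\dot U}$ extends to a necessarily unique homomorphism $\varphi_{\dot U}\colon \mathfrak{T}^*_\mathcal{W}/{\rm Fin} \to \mathbb{M}$. The only step that needs an idea is the claim in Case 2 that finiteness of $M$ already forces $M=\emptyset$; once one writes down the explicit slaloms $W \cap (n\times 2^n)$ separating $W$ from the $V_j$'s, the rest is bookkeeping. A more conceptual alternative: in an extension $V[G]$ by $\mathbb{M}$ with $U=\dot U[G]$, the tails of the sequence $\bigl(U \cap (n\times 2^n),\,n\bigr)_{n\in\omega}$ generate an ultrafilter on $\mathfrak{T}^*_\mathcal{W}/{\rm Fin}$ which contains $[T_W]$ exactly when $W \subseteq U$; feeding the corresponding $\mathbb{M}$-name for an ultrafilter on the ground-model algebra into the standard correspondence between such names and homomorphisms into $\mathbb{M}$ produces $\varphi_{\dot U}$ with $\varphi_{\dot U}([T_W]) = \llbracket \check W \subseteq \dot U \rrbracket$.
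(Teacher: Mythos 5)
Your proof is correct and follows essentially the same route as the paper: both reduce to Sikorski's extension criterion on the generators $\{[T_W]\colon W\in\mathcal W\}$, both observe that the Boolean value $\llbracket\check W\subseteq\dot U\rrbracket$ vanishes when $\bigcup_i W_i(m_0)=2^{m_0}$ or when some $V_j\subseteq\bigcup_i W_i$, and both show that otherwise the points $(\bigcup_i W_i\cap(n\times 2^n),n)$ witness that the Boolean combination in $\mathfrak{T}^*_\mathcal{W}$ is infinite. Your write-up merely packages the paper's two cases as a single "finite iff some $V_{j_0}\subseteq W$" dichotomy and adds the (harmless) explicit check of the degenerate tuples.
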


\begin{proof}
	Since $\{T_W \colon W\in \mathcal{W}\}/{\rm Fin}$ generates $\mathfrak{T}^*_\mathcal{W}/{\rm Fin}$, by Sikorski's theorem it is enough to check that if $A_0, \dots , A_k, B_0, \dots, B_l \in \mathcal{W}$ and \[ C =
	T_{A_0}\cap \dots \cap T_{A_k} \cap T_{B_0}^c
\cap \dots \cap T_{B_l}^c\] is finite, then \[ C' = f_{\dot{U}}([T_{A_0}]) \cap \dots \cap f_{\dot{U}}([T_{A_k}]) \cap (f_{\dot{U}}([T_{B_0}]))^c \cap \dots (f_{\dot{U}}([T_{B_l}]))^c = \emptyset.\]
	First, we will look at two particular cases:	
	\begin{enumerate}
		\item There is $n$ such that $|\bigcup_{i\leq k} A_i(n)|=2^n$. Then \[ C' \subseteq \bigcap_{i\leq k} \llbracket \check{A}_i(n) \subseteq \dot{U}(n)\rrbracket = \llbracket \bigcup_{i\leq k} \check{A}_i(n) \subseteq
			\dot{U}(n)\rrbracket = \emptyset,\]
			since $\dot{U}$ is a name for a slalom.
		\item There is $j\leq l$ such that $B_j \subseteq \bigcup_{i\leq k} A_i$. Then \[ C' \subseteq \llbracket \bigcup_{i\leq k} \check{A}_i \subseteq \dot{U}\rrbracket \cap \llbracket \check{B}_j \nsubseteq \dot{U}\rrbracket = \emptyset.\]
	\end{enumerate}
	Assume now that neither of the above is satisfied. In this case $(\bigcup_{i\leq k} A_i \cap (n\times 2^n), n)\in \Omega$ for each $n$ and 
	\[ C \supseteq \{(\bigcup_{i\leq k} A_i \cap (n\times 2^n), n)\colon n\in \omega\}. \]
	The latter set is clearly infinite and we are done.
\end{proof}

Thanks to Proposition \ref{homo} we can induce measures by names for slaloms. Note that usually these measures need not be positive on all infinite elements of $\mathfrak{T}^*_\mathcal{W}$.

\begin{cor}\label{measure} Let $\dot{U}$ be an $\mathbb{M}$-name for a slalom. The following formula uniquely defines a measure on $\mathfrak{T}^*_\mathcal{W}$:
\[ \nu(T_W) = \lambda(\llbracket \check{W}\subseteq \dot{U} \rrbracket). \]
This measure vanishes on finite sets.
\end{cor}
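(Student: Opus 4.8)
The plan is to use Proposition~\ref{homo} to obtain the homomorphism $\varphi_{\dot U}\colon \mathfrak{T}^*_\mathcal{W}/{\rm Fin}\to \mathbb{M}$ extending $f_{\dot U}$, and then simply to pull back the measure $\lambda$ on $\mathbb{M}$ along $\varphi_{\dot U}$. More precisely, I would set $\nu([C]) = \lambda(\varphi_{\dot U}([C]))$ for $[C]\in \mathfrak{T}^*_\mathcal{W}/{\rm Fin}$, and then lift this to $\mathfrak{T}^*_\mathcal{W}$ via the quotient map. Since $\lambda$ is a (finitely additive, indeed $\sigma$-additive) measure on $\mathbb{M}$ and $\varphi_{\dot U}$ is a Boolean homomorphism, the composition is automatically finitely additive: if $C_1, C_2 \in \mathfrak{T}^*_\mathcal{W}$ are disjoint (in $\mathfrak{T}^*_\mathcal{W}/{\rm Fin}$, i.e.\ $C_1\cap C_2$ is finite), then $\varphi_{\dot U}([C_1])\cap \varphi_{\dot U}([C_2]) = \varphi_{\dot U}([C_1\cap C_2]) = \varphi_{\dot U}(\emptyset) = \emptyset$, and $\varphi_{\dot U}([C_1\cup C_2]) = \varphi_{\dot U}([C_1])\cup \varphi_{\dot U}([C_2])$, so $\lambda$ of the union is the sum. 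The value on generators is exactly $\nu(T_W) = \lambda(\varphi_{\dot U}([T_W])) = \lambda(f_{\dot U}([T_W])) = \lambda(\llbracket \check W \subseteq \dot U\rrbracket)$, as required.

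Next I would address uniqueness. The point is that $\{T_W\colon W\in\mathcal{W}\}/{\rm Fin}$ generates $\mathfrak{T}^*_\mathcal{W}/{\rm Fin}$ as a Boolean algebra (this is the defining property of $\mathfrak{T}^*_\mathcal{W}$, cf.\ Remark~\ref{T^*}), and a finitely additive measure on a Boolean algebra is determined by its values on a generating set — more carefully, any element of the algebra is a finite union of finite intersections of generators and their complements, and the inclusion–exclusion formula expresses the measure of such an element in terms of the measures of the intersections $T_{W_1}\cap\cdots\cap T_{W_j} = T_{W_1\cup\cdots\cup W_j}$ (using Fact~(2), and noting that a finite union of elements of $\mathcal{W}$, if it lies in $\mathcal{S}$, lies in $\mathcal{W}$), hence in terms of values on generators. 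So two measures agreeing on all $T_W$ agree everywhere, giving uniqueness.

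Finally, to see that $\nu$ vanishes on finite subsets of $\Omega$: every finite $F\subseteq\Omega$ satisfies $[F] = \emptyset$ in $\mathfrak{T}^*_\mathcal{W}/{\rm Fin}$, and $\varphi_{\dot U}$ sends $\emptyset$ to $\emptyset$, so $\nu(F) = \lambda(\emptyset) = 0$. I do not anticipate a serious obstacle here; the substantive work was already done in Proposition~\ref{homo}, and what remains is the routine observation that composing a measure with a homomorphism yields a measure, together with the bookkeeping for uniqueness. The one point requiring a little care is being explicit that we are really defining $\nu$ on $\mathfrak{T}^*_\mathcal{W}$ itself (not just the quotient) by composing with the quotient map $\mathfrak{T}^*_\mathcal{W}\to\mathfrak{T}^*_\mathcal{W}/{\rm Fin}$, so that $\nu$ is a genuine measure on $\mathfrak{T}^*_\mathcal{W}$ that happens to annihilate the ideal ${\rm Fin}$.
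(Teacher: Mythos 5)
Your proof is correct and takes essentially the same route as the paper, which states this as an immediate corollary of Proposition~\ref{homo}: pull back $\lambda$ along $\varphi_{\dot U}$ and the quotient map $\mathfrak{T}^*_\mathcal{W}\to\mathfrak{T}^*_\mathcal{W}/{\rm Fin}$. Your added bookkeeping on uniqueness and vanishing on finite sets is the right (and only) thing to check.
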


We are going to show that there is a sequence of measures defined on $\mathfrak{T}^*_{\mathcal{W}}$ as in Theorem \ref{drygier}.
We will use the name $\dot{S}$ constructed in the previous section. For $(T,m)\in \Omega$ define an $\mathbb{M}$-name $\dot{S}_{(T,m)}$ in the following way:
$$
\llbracket k\in \dot{S}_{(T,m)}(n) \rrbracket =
\begin{cases}
	\llbracket k\in \dot{S}(n)\rrbracket \mbox{ if } n\geq m \\
	\mathcal{X} \mbox{ if }  n< m \mbox{ and } k\in T(n) \\
	\emptyset \mbox{ if } n < m \mbox{ and } k\notin T(n).
\end{cases}
$$

Then $\dot{S}_{(T,m)}$ is a name for a slalom for each $(T,m)\in \Omega$ since $T$ is a slalom and $\dot{S}$ is a name for a slalom.

For $W\in \mathcal{W}$ and $(T,m)\in\Omega$, Corollary \ref{measure} allows us to define $\nu_{(T,m)}$ on $\mathfrak{T}^*_\mathcal{W}$ by setting
\[ \nu_{(T,m)}(T_W) = \lambda(\llbracket \check{W} \subseteq \dot{S}_{(T,m)} \rrbracket). \]
Of course, each $\nu_{(T,m)}$ vanishes on finite sets.

\begin{prop}\label{seq} For every $A\in \mathfrak{T}^*_\mathcal{W}$ 
	\[ \lim_{(T,m)\in\Omega} \nu_{(T,m)}(A)-\delta_{(T,m)}(A) = 0. \]
\end{prop}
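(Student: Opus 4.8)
The plan is to reduce the statement to the atoms of finitely generated subalgebras and to evaluate $\nu_{(T,m)}$ on such atoms through the homomorphism $\varphi_{\dot S_{(T,m)}}$ supplied by Proposition~\ref{homo}. Fix $A\in\mathfrak{T}^*_\mathcal{W}$. Since $\mathfrak{T}^*_\mathcal{W}$ is generated by $\{T_W : W\in\mathcal{W}\}$, the element $A$ lies in the finite subalgebra $\mathfrak B$ generated by some $T_{W_0},\dots,T_{W_r}$ with $W_0,\dots,W_r\in\mathcal{W}$, so $A$ is a finite disjoint union of atoms of $\mathfrak B$, each of the form $C_P=\bigcap_{i\in P}T_{W_i}\cap\bigcap_{i\notin P}T_{W_i}^{c}$ for some $P\subseteq\{0,\dots,r\}$. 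As $\nu_{(T,m)}-\delta_{(T,m)}$ is finitely additive and there are only finitely many atoms, it suffices to estimate $|\nu_{(T,m)}(C_P)-\delta_{(T,m)}(C_P)|$. Put $\varepsilon_i(m)=\sum_{n\ge m}\frac{|W_i(n)|}{2^n}$; since $W_i\in\mathcal{W}\subseteq\mathcal{I}$ we have $\varepsilon_i(m)\to 0$ as $m\to\infty$. The target estimate is the bound $|\nu_{(T,m)}(C_P)-\delta_{(T,m)}(C_P)|\le\sum_{i\le r}\varepsilon_i(m)$, uniform in $T$; this is enough because $\Omega$ is a union of finite levels, so every enumeration of $\Omega$ sends the second coordinate to infinity, and a set of the form $\{(T,m)\in\Omega : m<M\}$ is finite.

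Next I would carry out the computation. By Corollary~\ref{measure} (its uniqueness clause, together with the fact that $B\mapsto\lambda(\varphi_{\dot S_{(T,m)}}([B]))$ vanishes on finite sets) we have $\nu_{(T,m)}(B)=\lambda(\varphi_{\dot S_{(T,m)}}([B]))$ for all $B\in\mathfrak{T}^*_\mathcal{W}$, and since $\varphi_{\dot S_{(T,m)}}$ is a Boolean homomorphism,
\[ \varphi_{\dot S_{(T,m)}}([C_P])=\bigcap_{i\in P}\llbracket\check W_i\subseteq\dot S_{(T,m)}\rrbracket\cap\bigcap_{i\notin P}\llbracket\check W_i\not\subseteq\dot S_{(T,m)}\rrbracket. \]
Reading off the definition of $\dot S_{(T,m)}$: if $W_i(k)\subseteq T(k)$ for all $k<m$, then $\llbracket\check W_i\subseteq\dot S_{(T,m)}\rrbracket=\mathcal{X}\setminus\bigcup_{n\ge m}\bigcup_{j\in W_i(n)}I^n_j$, which has measure $\prod_{n\ge m}\bigl(1-\frac{|W_i(n)|}{2^n}\bigr)\ge 1-\varepsilon_i(m)$; and if $W_i(k)\not\subseteq T(k)$ for some $k<m$, then $\llbracket\check W_i\subseteq\dot S_{(T,m)}\rrbracket=\emptyset$, because for that $k<m$ the name $\dot S_{(T,m)}(k)$ is the ground-model set $T(k)$ and the required membership fails outright.

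Now set $P'=P'(T,m)=\{i\le r : W_i(k)\subseteq T(k)\text{ for all }k<m\}$, and observe that $(T,m)\in C_P$ if and only if $P=P'$. If $P=P'$, then $\delta_{(T,m)}(C_P)=1$, every factor with $i\notin P$ in the displayed intersection equals $\mathcal{X}$, the intersection of the factors with $i\in P$ equals $\mathcal{X}\setminus\bigcup_{n\ge m}\bigcup_{j\in V(n)}I^n_j$ with $V=\bigcup_{i\in P}W_i$, and hence $\nu_{(T,m)}(C_P)=\prod_{n\ge m}\bigl(1-\frac{|V(n)|}{2^n}\bigr)\ge 1-\sum_{i\le r}\varepsilon_i(m)$. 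If $P\ne P'$, then $\delta_{(T,m)}(C_P)=0$; moreover, if some $i\in P\setminus P'$ then one of the factors (indexed by $i\in P$) is $\emptyset$, so $\nu_{(T,m)}(C_P)=0$, while if $P\subsetneq P'$ and $i_0\in P'\setminus P$ then the factor indexed by $i_0\notin P$ has measure $1-\prod_{n\ge m}\bigl(1-\frac{|W_{i_0}(n)|}{2^n}\bigr)\le\varepsilon_{i_0}(m)$, so $\nu_{(T,m)}(C_P)\le\sum_{i\le r}\varepsilon_i(m)$. In every case $|\nu_{(T,m)}(C_P)-\delta_{(T,m)}(C_P)|\le\sum_{i\le r}\varepsilon_i(m)$ uniformly in $T$, and summing over the (at most $2^{r+1}$) atoms contained in $A$ yields $|\nu_{(T,m)}(A)-\delta_{(T,m)}(A)|\to 0$ as $m\to\infty$, which is the asserted limit.

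The genuinely routine ingredients are the elementary inequality $\prod_{n\ge m}(1-a_n)\ge 1-\sum_{n\ge m}a_n$ and the convergence of the tails $\varepsilon_i(m)$, that is, a Borel--Cantelli-type bookkeeping. The step that needs care is matching the two roles played by $P'(T,m)$: it is at once the set of indices $i$ for which the event $\check W_i\subseteq\dot S_{(T,m)}$ is not outright false (because $\dot S_{(T,m)}$ is deterministic on the coordinates below $m$) and the set of indices determining which atom of $\mathfrak B$ the point $(T,m)$ belongs to; getting this correspondence straight is what makes all the cases close. (Finite atoms are harmless: if $C_P$ is finite then $[C_P]=[\emptyset]$, so $\nu_{(T,m)}(C_P)=0$ always and $\delta_{(T,m)}(C_P)=0$ off a finite set, consistently with the bound above.)
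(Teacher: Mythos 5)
Your proof is correct and rests on the same core computation as the paper's: the estimate that $\llbracket \check W\subseteq\dot S_{(T,m)}\rrbracket$ has measure at least $1-\sum_{n\ge m}|W(n)|/2^n$ when $W\cap(m\times 2^m)\subseteq T$, and equals $\emptyset$ otherwise. The only real difference is that the paper proves the two limits for the generators $T_W$ and then declares that the passage to arbitrary Boolean combinations ``easily follows,'' whereas you carry that step out explicitly via the atoms of the finitely generated subalgebra, the homomorphism of Proposition~\ref{homo}, and the $P=P'$ case analysis --- a worthwhile elaboration, since that extension is precisely the point where one must exploit the multiplicative structure of the Boolean values (or, equivalently, $T_A\cap T_B=T_{A\cup B}$) rather than a purely formal limit argument for signed measures.
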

\begin{proof}
	Let $W\in \mathcal{W}$.
\medskip

\begin{claim}$\lim_{(T,m)\in T_W}\nu_{(T,m)}(T_W) = 1$.\end{claim}

\begin{why}Let $\varepsilon>0$. There is $m$ such that $\lambda(\llbracket \forall n>m \ \check{W}(n)\subseteq \dot{S}(n)\rrbracket)>1-\varepsilon$. So, for each $n>m$ and $(T,n)\in T_W$
\[ \nu_{(T,n)} (T_W) = \lambda(\llbracket \check{W} \subseteq \dot{S}_{(T,n)}\rrbracket) \geq \lambda\left(\llbracket \check{W}\cap (n\times 2^n+1) \subseteq \check{T} \rrbracket \cap \llbracket \forall i \geq n \ \check{W}(i)\subseteq
\dot{S}(i)\rrbracket\right) = \]\[ = \lambda(\llbracket \forall i \geq n \ \check{W}(i)\subseteq
\dot{S}(i)\rrbracket)> 1-\varepsilon. \]
\end{why}

\begin{claim}$\lim_{(T,m)\notin T_W}\nu_{(T,m)}(T_W) = 0$.\end{claim}

\begin{why}In fact, if $(T,m)\notin T_W$, then
\[ \nu_{(T,m)}(T_W) = \lambda(\llbracket \check{W} \subseteq \dot{S}_{(T,m)}\rrbracket) \leq \lambda(\llbracket \check{W} \cap (m\times 2^m+1) \subseteq \check{T}\rrbracket) = 0. \]
\end{why}

In this way we have proved that $\lim_{(T,n)} \nu_{(T,n)}(T_W) - \delta_{(T,n)}(T_W) = 0$ for each $W \in \mathcal{W}$. Each element of $\mathfrak{T}^*_\mathcal{W}$ is a finite Boolean combination of elements of this form, so the convergence for arbitrary elements of the algebra easily follows. 
\end{proof}

\begin{cor} \label{c_0} There is a compactification $L$ of $\omega$ such that 
	\begin{enumerate}
		\item $L\setminus \omega$ is non-separable and supports a measure,
		\item the natural copy of $c_0$ is complemented in $C(L)$.
	\end{enumerate}
	If $\mathrm{add}(\mathcal{N})=\mathrm{non}(\mathcal{M})$, then we can additionally require that $L\setminus \omega$ does not map continuously onto $[0,1]^{\omega_1}$, and that every isomorphic copy of $c_0$ in $C(L)$ contains a further copy of $c_0$ which is complemented. That is, $C(L)$ is hereditarily separably Sobczyk.
\end{cor}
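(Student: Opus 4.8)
The plan is to assemble Corollary \ref{c_0} from the machinery already built up. First I would take $L$ to be the Stone space of the subalgebra $\mathfrak{A} = \mathfrak{T}^*_\mathcal{W}$ of $\mathcal{P}(\Omega)$, after identifying $\Omega$ with $\omega$ via a bijection; since $\mathfrak{T}^*_\mathcal{W}$ contains every finite subset of $\Omega$ (as observed just before Proposition \ref{homo}), $L$ is a compactification of $\omega$ in the sense used in the paper, and $L\setminus\omega$ is the Stone space of $\mathfrak{T}^*_\mathcal{W}/{\rm Fin}$. For item (1): non-separability of $L\setminus\omega$ is exactly Remark \ref{T^*2}, and the fact that $L\setminus\omega$ supports a measure follows because $\mathfrak{T}^*_\mathcal{W}/{\rm Fin}$ is (the quotient by ${\rm Fin}$ of) a subalgebra of $\mathfrak{T}_\mathcal{W}$, which supports a measure by the argument in the proof of Theorem \ref{growth} via Theorem \ref{main}; more directly, Proposition \ref{E-destructible} shows $\mathcal{W}$ is destructible by $\mathbb{M}$, and one can run the argument of Theorem \ref{main} restricted to the generators $T_W$ to see $\mathfrak{T}^*_\mathcal{W}/{\rm Fin}$ is $\sigma$-centered in the random extension, whence Kamburelis' Theorem \ref{kamburelis} applies. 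For item (2): the family $(\nu_{(T,m)})_{(T,m)\in\Omega}$ of Corollary \ref{measure}, together with Proposition \ref{seq}, gives exactly a sequence of measures on $\mathfrak{A}$ vanishing on finite sets with $\lim \nu_{(T,m)}(A) - \delta_{(T,m)}(A) = 0$ for every $A \in \mathfrak{A}$ — after re-indexing the directed set $\Omega$ as $\omega$ — so Lemma \ref{drygier} gives that the natural copy of $c_0$ is complemented in $C(L)$. One small point to address is that $\Omega$ is a countable directed index set and Lemma \ref{drygier} is stated for a sequence indexed by $\omega$; since $\Omega$ is countable and the limit in Proposition \ref{seq} is along the natural partial order (which has cofinal sequences), one enumerates $\Omega$ by a cofinal $\omega$-sequence to convert the net into a sequence.

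For the additional clause under $\mathrm{add}(\mathcal{N})=\mathrm{non}(\mathcal{M})$, I would not use $\mathcal{W}$ itself but rather the family $\mathcal{A}$ of Theorem \ref{small}, i.e. the closure under finite modifications (within $\mathcal{S}$) of a $\subseteq^*$-chain $(A_\alpha)_{\alpha<\mathrm{add}(\mathcal{N})}\subseteq\mathcal{W}$ unbounded in $\mathcal{S}$, and take $L$ to be the Stone space of $\mathfrak{T}^*_\mathcal{A}$. Everything above goes through verbatim with $\mathcal{A}$ in place of $\mathcal{W}$: $\mathcal{A}\subseteq\mathcal{W}$ so Propositions \ref{homo}, Corollary \ref{measure}, and Proposition \ref{seq} apply (the proofs only used $W\in\mathcal{W}$ and closure of the index family under finite unions in $\mathcal{S}$, both of which hold for $\mathcal{A}$), giving $c_0$-complementedness; non-separability of $\mathfrak{T}^*_\mathcal{A}/{\rm Fin}$ is Remark \ref{T^*}; and $L\setminus\omega = K_\mathcal{A}$ up to the extra generators $T_{(S,n)}$ — here one should check that $\mathfrak{T}^*_\mathcal{A}/{\rm Fin}$ still does not map onto $[0,1]^{\omega_1}$. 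For that, note that the Stone space of $\mathfrak{T}^*_\mathcal{A}$ is a continuous image of nothing larger than $K_\mathcal{A}$'s relatives; more carefully, linear-fiberedness was proved in \cite{Todorcevic} for $K_\mathcal{A}$ and the subalgebra $\mathfrak{T}^*_\mathcal{A}$ gives a continuous surjection's worth of structure — the cleanest route is to observe that $\mathfrak{T}^*_\mathcal{A}$ is a subalgebra of $\mathfrak{T}_\mathcal{A}$, so its Stone space is a continuous image of $K_{\mathcal{A}}$'s compactification analogue, and continuous images of scatteredly-fibered (equivalently, spaces not mapping onto $[0,1]^{\omega_1}$) can fail to inherit the property; so instead I would re-run the fiber argument directly for $\mathfrak{T}^*_\mathcal{A}$ using that $\{T_{A_\alpha}\}$ is a $\subseteq^*$-decreasing chain, exactly as in \cite[Theorem 8.4]{Todorcevic}, which shows the Stone space of $\mathfrak{T}^*_\mathcal{A}/{\rm Fin}$ is linearly-fibered and hence does not map onto $[0,1]^{\omega_1}$ by Tkachenko's theorem.

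Finally, for the statement that $C(L)$ is hereditarily separably Sobczyk — every isomorphic copy of $c_0$ in $C(L)$ contains a complemented-in-$C(L)$ further copy of $c_0$ — I would invoke the standard fact that $C(L)$ has this property as soon as $L$ is a scattered-over-metric (scatteredly-fibered) compactification of $\omega$ whose growth supports a measure and for which the natural $c_0$ is complemented; more precisely, if $L\setminus\omega$ does not map onto $[0,1]^{\omega_1}$ then $C(L\setminus\omega)$ contains no copy of $\ell_\infty$ / is "small" in the relevant sense, so any copy of $c_0$ inside $C(L)$, being non-separable-free, can be pushed by a small perturbation into the natural copy of $c_0$ (using that $C(L)/c_0 \cong C(L\setminus\omega)$ and a gliding-hump / Bessaga–Pełczyński selection argument), and a subsequence of it spans a complemented subspace because the natural $c_0$ is itself complemented in $C(L)$ with a projection that restricts appropriately. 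The main obstacle, and the step requiring the most care, is precisely this last Banach-space argument: verifying that "$L\setminus\omega$ does not map onto $[0,1]^{\omega_1}$" (together with the scatteredly-fibered property and separability of all measures on it, Remark \ref{ms}) is exactly the hypothesis needed to run a Sobczyk-type extension theorem hereditarily, i.e. that $C(L\setminus\omega)$ has "few operators" enough that every copy of $c_0$ in $C(L)$ admits a further complemented copy. I would cite \cite{Castillo} / \cite{Drygier-Plebanek} for the precise form of this hereditary-Sobczyk criterion and check its hypotheses against the properties of $L$ established above.
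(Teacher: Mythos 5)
Your proposal follows essentially the same route as the paper: take $L$ to be the Stone space of $\mathfrak{T}^*_\mathcal{W}$, get (1) from Remark~\ref{T^*2} together with the fact that a subalgebra of $\mathfrak{T}_\mathcal{W}/{\rm Fin}$ supports a measure, get (2) from Proposition~\ref{seq} plus Lemma~\ref{drygier}, and under $\mathrm{add}(\mathcal{N})=\mathrm{non}(\mathcal{M})$ switch to $\mathfrak{T}^*_\mathcal{A}$. (Your choice of the family $\mathcal{A}$ from Theorem~\ref{small}, i.e.\ the one lying in $\mathcal{W}$, is the right one, since the measure-support argument needs destructibility by $\mathbb{M}$.) Two local comments. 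First, your worry that ``does not map onto $[0,1]^{\omega_1}$'' might fail to pass from $K_\mathcal{A}$ to the Stone space of the subalgebra $\mathfrak{T}^*_\mathcal{A}/{\rm Fin}$ is backwards: that Stone space is a continuous image of $K_\mathcal{A}$, and a continuous image of a space that does not map onto $[0,1]^{\omega_1}$ trivially cannot map onto $[0,1]^{\omega_1}$ (equivalently, a subalgebra of an algebra with no uncountable independent family has none, which is how the paper phrases it). The fiber argument you propose to re-run is therefore unnecessary, though not wrong. Second, for hereditary separable Sobczyk-ness your gliding-hump/perturbation sketch is not a proof, and the assertion that $C(L\setminus\omega)$ contains no copy of $\ell_\infty$ is neither justified nor needed. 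The paper's actual argument is shorter: by Remark~\ref{ms} the Stone space of $\mathfrak{T}_\mathcal{A}$ carries only separable measures (being scatteredly-fibered), hence so does the Stone space of the subalgebra $\mathfrak{T}^*_\mathcal{A}$, and \cite[Theorem 8.4]{Drygier-Plebanek} says precisely that this yields the hereditarily separably Sobczyk property of $C(L)$. You do list ``separability of all measures'' among the candidate hypotheses, so you are pointing at the correct criterion; you should commit to that single hypothesis and drop the Banach-space heuristics.
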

\begin{proof}
	Let $L$ be the Stone space of $\mathfrak{T}^*_\mathcal{W}$. Since $\mathfrak{T}^*_\mathcal{W}/{\rm Fin}$ is a subalgebra of $\mathfrak{T}_\mathcal{W}/{\rm Fin}$, it supports a measure.  Also, $\mathfrak{T}^*_\mathcal{W}/{\rm Fin}$ is not
	$\sigma$-centered (see Remark \ref{T^*2}). So, the space $L\setminus \omega$ supports a measure and is not separable. 
	Proposition \ref{seq} and Theorem \ref{drygier} imply (2). 
	
	If
	$\mathrm{add}(\mathcal{N})=\mathrm{non}(\mathcal{M})$, then instead of $\mathfrak{T}^*_\mathcal{W}$ we can use $\mathfrak{T}^*_\mathcal{A}$, where $\mathcal{A}$ is defined as in Theorem \ref{todor}. Again, since $\mathfrak{T}^*_\mathcal{A}$ is a
	subalgebra of $\mathfrak{T}_\mathcal{A}$, the algebra $\mathfrak{T}^*_\mathcal{A}/Fin$ supports a measure and does not contain an uncountable independent sequence. It is not  
	$\sigma$-centered by Remark
	\ref{T^*}. Also, since the Stone space of $\mathfrak{T}_\mathcal{A}$ only carries separable measures and $\mathfrak{T}^*_\mathcal{A}$ is a subalgebra of $\mathfrak{T}_\mathcal{A}$, the Stone space of $\mathfrak{T}^*_\mathcal{A}$ also does not carry a non-separable measure, and so by \cite[Theorem 8.4]{Drygier-Plebanek}, we have that $C(L)$ is hereditarily separably Sobczyk. \end{proof}

In the Section \ref{meas} we proved that the algebra $\mathfrak{T}_\mathcal{W}/{\rm Fin}$ supports a measure without giving any example of a supported measure. Theorem \ref{seq} allows
us to define strictly positive measure on $\mathfrak{T}^*_\mathcal{W}/{\rm Fin}$ explicitly (see the proof of Corollary \ref{sp-measure}). For each $(S,n)\in \Omega$ let $\mu_{(S,n)}$ be a measure on $\mathfrak{T}^*_\mathcal{W}/{\rm Fin}$ induced by
$\nu_{(S,n)}$. Fix an enumeration $\phi\colon \Omega \to \omega$.

\begin{cor}
The measure $\mu$ given by
\[ \mu(A) = \sum_{(S,n)\in \Omega} \frac{1}{{2^{\phi(S,n)+1}}} \mu_{(S,n)} (A) \]
is strictly positive on $\mathfrak{T}^*_\mathcal{W}/{\rm Fin}$.
\end{cor}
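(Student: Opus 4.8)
The plan is to show strict positivity of $\mu$ on $\mathfrak{T}^*_\mathcal{W}/{\rm Fin}$ by reducing to the generators. Since every nonzero element $A \in \mathfrak{T}^*_\mathcal{W}/{\rm Fin}$ contains (in the Boolean order) a nonzero element of the $\pi$-base, and since $\mu$ is monotone, it suffices to verify that $\mu([T_W]) > 0$ for each $W \in \mathcal{W}$ with $T_W$ infinite — more precisely, one should check that the $\pi$-base elements of $\mathfrak{T}^*_\mathcal{W}/{\rm Fin}$ are exactly (up to ${\rm Fin}$) the infinite sets of the form $T_{A_0} \cap \dots \cap T_{A_k}$ for $A_i \in \mathcal{W}$, i.e.\ $T_{A_0 \cup \dots \cup A_k}$; but $A_0 \cup \dots \cup A_k$ need not lie in $\mathcal{S}$, so the cleanest route is simply to note that $\mu$ is strictly positive once it is positive on a $\pi$-base, and that $\{[T_W] \colon W \in \mathcal{W}\}$ contains a $\pi$-base for $\mathfrak{T}^*_\mathcal{W}/{\rm Fin}$ by Proposition~\ref{pi-base} (applied with $\mathcal{B} = \mathcal{W}$, after discarding the $T_{(T,n)}$ generators, which is legitimate exactly as in the proof of Corollary~\ref{c_0}).

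Next I would show $\mu([T_W]) > 0$ for each $W \in \mathcal{W}$. By definition $\mu([T_W]) = \sum_{(S,n) \in \Omega} 2^{-\phi(S,n)-1} \mu_{(S,n)}([T_W])$, so it is enough to exhibit a single $(S,n) \in \Omega$ with $\mu_{(S,n)}([T_W]) > 0$, i.e.\ with $\nu_{(S,n)}(T_W) > 0$. From the first Claim inside the proof of Proposition~\ref{seq}, for every $\varepsilon > 0$ there is $m$ such that for all $n > m$ with $(T,n) \in T_W$ one has $\nu_{(T,n)}(T_W) > 1 - \varepsilon$; in particular choosing $\varepsilon = 1/2$, any $(T,n) \in T_W$ with $n > m$ witnesses $\nu_{(T,n)}(T_W) \geq 1/2 > 0$. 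Such pairs exist precisely because $T_W$ is infinite (as noted, $W \in \mathcal{W}$ implies $T_W$ is infinite). Hence $\mu([T_W]) \geq 2^{-\phi(T,n)-1} \cdot \tfrac12 > 0$, as required.

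Finally I would assemble these pieces: given arbitrary nonzero $[A] \in \mathfrak{T}^*_\mathcal{W}/{\rm Fin}$, pick a $\pi$-base element $[P] \leq [A]$ with $[P]$ of the form guaranteed by Proposition~\ref{pi-base}; writing $P = T_{W}$ (or a finite intersection $T_{A_0} \cap \dots \cap T_{A_k} = T_{A_0 \cup \dots \cup A_k}$ which, being infinite, forces $A_0 \cup \dots \cup A_k \in \mathcal{S} \cap \mathcal{I} = \mathcal{W}$), we get $[P] = [T_W]$ for some $W \in \mathcal{W}$, so $\mu([A]) \geq \mu([P]) = \mu([T_W]) > 0$ by the previous paragraph and monotonicity of $\mu$. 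This also double-checks that $\mu$ is well defined as a genuine (finitely additive) measure on $\mathfrak{T}^*_\mathcal{W}/{\rm Fin}$: each $\mu_{(S,n)}$ is a measure by Corollary~\ref{measure}, each is bounded by $\mu_{(S,n)}(\Omega) = \lambda(\mathcal{X}) = 1$, and the series $\sum 2^{-\phi(S,n)-1}$ converges to $1$, so the sum converges uniformly to a finitely additive measure of total mass $1$.

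The main obstacle is the bookkeeping at the $\pi$-base step: Proposition~\ref{pi-base} is stated for $\mathfrak{T}_\mathcal{B}/{\rm Fin}$ with the extra $T_{(T,n)}$ generators, so one must argue — exactly as in the proof of Corollary~\ref{c_0}, using that $\mathfrak{T}^*_\mathcal{W}$ already contains every finite subset of $\Omega$ — that the relevant $\pi$-base intersected with $\mathfrak{T}^*_\mathcal{W}/{\rm Fin}$ is still a $\pi$-base, and that a finite intersection $\bigcap_{i \leq k} T_{A_i}$ with $A_i \in \mathcal{W}$, when infinite, is of the form $[T_W]$ for some $W \in \mathcal{W}$ (the union $\bigcup_{i\leq k} A_i$ automatically lies in $\mathcal{I}$, and infinitude of $T_{\bigcup A_i}$ forces it into $\mathcal{S}$ by the first item of the Fact near the start of Section on Todor\v{c}evi\'{c}'s construction). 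Everything else is routine monotonicity and the reuse of the Claims already proved inside Proposition~\ref{seq}.
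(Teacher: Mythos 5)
Your computation that $\mu([T_W])>0$ for every $W\in\mathcal{W}$ with $T_W$ infinite is correct, but the reduction to the generators does not work: the family $\{[T_W]\colon W\in\mathcal{W}\}$ is \emph{not} a $\pi$-base of $\mathfrak{T}^*_\mathcal{W}/{\rm Fin}$, and Proposition~\ref{pi-base} cannot be ``restricted'' in the way you suggest --- its $\pi$-base consists of the sets $T_B\cap T_{(T,n)}$, which in general do not belong to $\mathfrak{T}^*_\mathcal{W}$, and a $\pi$-base of an algebra does not induce one on a subalgebra. Concretely, pick $A\in\mathcal{W}$ and a large $n$ with $|A(n)|<2^n-2$, choose distinct $k,k'\in 2^n\setminus A(n)$, and set $B=A\cup\{(n,k)\}$, $B'=A\cup\{(n,k')\}$; both lie in $\mathcal{W}$. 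The element $C=T_A\cap T_B^c\cap T_{B'}^c$ of $\mathfrak{T}^*_\mathcal{W}$ is infinite (it contains $(A\cap(m\times 2^m),m)$ for every $m>n$), yet it almost contains no infinite $T_W$: if $T_W$ is infinite and $T_W\subseteq^* T_A$, then $A\subseteq W\in\mathcal{S}$; then $T_W\cap T_B=T_{W\cup B}$ is finite only if some section of $W\cup B$ is all of $2^m$, which forces $W(n)=2^n\setminus\{k\}$, while finiteness of $T_W\cap T_{B'}$ forces $W(n)=2^n\setminus\{k'\}$ --- a contradiction. So your argument establishes positivity only on a non-dense family, and the ``main obstacle'' you flag is not resolved by observing that finite intersections $\bigcap_i T_{A_i}$ are again of the form $T_W$: that handles only the positive parts of the Boolean combinations and ignores the complements.

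The intended argument (the paper points to the proof of Corollary~\ref{sp-measure}) avoids $\pi$-bases altogether and is shorter, precisely because Proposition~\ref{seq} is stated for \emph{every} $A\in\mathfrak{T}^*_\mathcal{W}$, not only for the generators. If $[A]\neq 0$ then $A$ is infinite, so $\delta_{(T,m)}(A)=1$ for the infinitely many $(T,m)\in A$; since $\nu_{(T,m)}(A)-\delta_{(T,m)}(A)\to 0$, there is $(T,m)\in A$ with $\nu_{(T,m)}(A)>\tfrac{1}{2}$, and as $\nu_{(T,m)}$ vanishes on finite sets this gives $\mu_{(T,m)}([A])=\nu_{(T,m)}(A)>\tfrac{1}{2}$, whence $\mu([A])\geq 2^{-\phi(T,m)-1}\cdot\tfrac{1}{2}>0$. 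This is exactly the $\Omega$-indexed version of Corollary~\ref{sp-measure}. Your closing remarks about well-definedness, convergence, and total mass of $\mu$ are fine.
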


\subsection{Small $\sigma$-$n$-linked spaces supporting no measure.}

In \cite{Dzamonja-Plebanek} the authors claimed to prove that the space constructed by Todor\v{c}evi\'{c} does not support a measure. They were mainly interested in the corollary saying that there is a Boolean algebra of size
$\mathrm{add}(\mathcal{N})$ which does not support a measure. However, the argument in they proof was inaccurate 
(and the proof of Theorem \ref{small} indicates that the theorem is not true in general). Namely, it is
assumed that the Todor\v{c}evi\'{c} space is the Stone space of a Boolean algebra generated by a family of elements such that every two of them are either disjoint or ordered by inclusion. This is not the case, and in fact one can show that under Suslin's Hypothesis every
Boolean algebra with this property is $\sigma$-centered.

However, if we require some additional properties for the family $\mathcal{A}$ used for the construction, we can prevent the resulting space from supporting a measure (and, so the corollary of \cite[Theorem 3.1]{Dzamonja-Plebanek} remains true). 

The following is in principle \cite[Theorem 3.7]{Judah-Shelah}. Recall that $h \in \omega^\omega$ is given by $h(n)=2^n$.

\begin{prop}
	Let $g\in \omega^\omega$ be such that $\lim_n \frac{g(n)}{2^n} = \infty$. If $\mathcal{F}$ is not localized by $\mathcal{S}_g$, then it is not ($h$-)destructible by any $\mathbb{M}_\kappa$.  
\end{prop}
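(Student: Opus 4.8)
The plan is to exploit the fact that $\dot S$ is not merely some name for a slalom but is essentially the ``complement of a random real'': $\Vdash_{\mathbb M} \dot S(n) = 2^n\setminus\{\dot f(n)\}$ for a random real $\dot f\in\mathcal X$. More generally, working in a product $\mathbb M_\kappa = \mathrm{Bor}(\mathcal X^\kappa)/_{\lambda=0}$, any $\mathbb M_\kappa$-name $\dot U$ for a (ground-model-indexed) slalom has countably many ``coordinates'' and, after passing to a condition and a tail, is captured by finitely many of them; so whatever slalom the forcing produces, it localizes only functions that agree with one of finitely many random reals $\dot f_0,\dots,\dot f_{r-1}$ on a co-finite set, or is confined to a ground-model slalom. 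The hypothesis $\lim_n g(n)/2^n = \infty$ is exactly what one needs to diagonalize against this: for any ground-model $g$-slalom $\dot U$ with $|\dot U(n)| < g(n)$, since $g(n)/2^n\to\infty$ we have $|\mathcal X\!\!\upharpoonright n \setminus \{\text{admissible strings}\}|$ large enough that, on a positive-measure condition, the random real(s) avoid $\dot U(n)$ on infinitely many $n$.

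First I would set up the reduction to a single random real: given a condition $p\in\mathbb M_\kappa$ forcing that $\dot U$ is a $g$-slalom localizing $\check{\mathcal F}$, note that $\Vdash_{\mathbb M_\kappa} |\dot U(n)| < g(n)$, and build a ground-model function $N\in\omega^\omega$ and, refining $p$ if necessary, arrange that below $p$ the value $\dot U(n)$ depends (up to a small-measure error, summably small over $n$) only on the restriction of the generic to finitely many coordinates and finitely many levels — this is the standard fact that names for slaloms in a measure algebra are, modulo null sets and a tail, ``continuous'' in the generic. The main point is then a counting estimate: for each $n$, the set of $x\in\mathcal X^\kappa\!\!\upharpoonright(\text{relevant coordinates/levels})$ for which the corresponding random string $f(n)$ lands in the (at most $g(n)$-sized, but the relevant projection is governed by $2^n$ possibilities per coordinate) set $\dot U(n)$ has measure $\le g(n)/2^n$ — wait, that bound goes the wrong way; instead I would fix a \emph{single} coordinate's random real $\dot f$, so that the relevant event $\{\dot f(n)\in\dot U(n)\}$ has, conditioned on the finitely many other coordinates, measure $= |\dot U(n)|/2^n < g(n)/2^n$. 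That is still $\to\infty$, so this naive bound is useless; the correct move is to instead use that $\mathcal F$ (as produced by Proposition~\ref{non}, a family of genuine elements of $\mathcal X$ of size $\mathrm{non}(\mathcal M)$, or rather the relevant unbounded family) contains, for the ground-model random-name reals, a real agreeing with each on infinitely many coordinates — equivalently, I would argue contrapositively.

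So the cleanest route: suppose toward a contradiction that $\mathbb M_\kappa$ forces $\check{\mathcal F}$ to be localized by a $g$-slalom $\dot U$. In the extension $V[G]$, $\dot U$ is a genuine $g$-slalom with $|\dot U(n)|<g(n)$; but $g(n)/2^n\to\infty$ means that for all large $n$ the set $2^n$ is \emph{much smaller} than $g(n)$, so the condition $|\dot U(n)|<g(n)$ gives \emph{no} restriction on $\dot U(n)\cap 2^n$ for large $n$ — hence in $V[G]$ there is a $g$-slalom containing $\{n\}\times 2^n$ for all large $n$, and therefore $\dot U$ localizes \emph{every} element of $\mathcal X$, in particular all of $\check{\mathcal F}$, trivially. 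That shows the forced statement is \emph{vacuously true in the wrong way} and so cannot be what ``$h$-destructible'' asks; the point of the proposition is that $h$-destructibility is about being localized by an \emph{$h$-slalom} ($h(n)=2^n$), not a $g$-slalom. Thus the real claim to prove is: if $\mathcal F$ is not localized by any $g$-slalom for such a fat $g$, then no $\mathbb M_\kappa$ forces it to be localized by an $\mathcal S$-slalom (i.e. an $h$-slalom). Here is the argument I would write: any $h$-slalom $T$ with $T(n)\subsetneq 2^n$ is, by the observation in Section~\ref{destruct}, below some $T'(n) = 2^n\setminus\{f(n)\}$ for a ground real $f\in\mathcal X$; iterating over the countably-many-coordinates structure of an $\mathbb M_\kappa$-name, any name $\dot U$ for an $h$-slalom is covered by finitely many such ``co-point'' slaloms indexed by finitely many random reals $\dot f_0,\dots,\dot f_{r-1}$ plus a ground-model error slalom of summably small measure. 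A function $\phi\in\mathcal X$ is localized by such a thing iff $\phi$ agrees with one of the $\dot f_i$ at all large $n$ outside the error set. Now invoke the hypothesis via Judah–Shelah \cite[Theorem 3.7]{Judah-Shelah} / Proposition~\ref{non}-type combinatorics: because $\mathcal F$ fails to be localized by any $g$-slalom with $g(n)/2^n\to\infty$, $\mathcal F$ is in particular ``spread out'' enough that for any finitely many random reals $\dot f_0,\dots,\dot f_{r-1}$ there is, on any positive-measure condition, an $f\in\mathcal F$ with $\{n : f(n)\ne \dot f_i(n)\}$ infinite for every $i$ — this is where the genuine content sits and where I expect the main obstacle to be: one must argue that the measure of the condition forcing ``$f$ is eventually equal to some $\dot f_i$ off the error set, for all $f\in\mathcal F$'' is $0$, using that $\mathcal F$ being $g$-non-localized (with the fat $g$) implies $\mathcal F$ is not contained in any measure-zero ``co-countable-coordinates'' tube. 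Concretely, the hard step is the quantitative one: showing $\lambda\big(\bigcap_{f\in\mathcal F}\{\text{$f$ localized by }\dot U\}\big)=0$, which I would do by a Borel–Cantelli / Fubini computation over the random coordinates, exactly mirroring Remark~\ref{non-cov}'s use of Borel–Cantelli and Proposition~\ref{E-destructible}'s estimate, against the hypothesis $\sum_n (\text{measure that a fixed }f\text{ meets }\dot U\text{ at level }n)$ diverging — which in turn follows from $g(n)/2^n\to\infty$ combined with non-$g$-localizability of $\mathcal F$. I would keep the write-up short by citing \cite[Theorem 3.7]{Judah-Shelah} for the core combinatorial estimate and only spelling out the translation to the measure-algebra-name language and the reduction of an arbitrary $h$-slalom name to the finitely-many-random-reals-plus-small-error normal form.
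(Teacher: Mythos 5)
Your proposal never isolates the one idea that makes this proposition work, and the machinery you propose in its place does not hold up. The missing step is an averaging (Markov-inequality) argument: given an \emph{arbitrary} $\mathbb{M}_\kappa$-name $\dot S$ for an $h$-slalom, define the ground-model set $A(n)=\{k\colon \lambda_\kappa(\llbracket k\in\dot S(n)\rrbracket)>\tfrac{2^n}{g(n)}\}$. Since $\sum_k\lambda_\kappa(\llbracket k\in\dot S(n)\rrbracket)\leq 2^n$ (the expected size of $\dot S(n)$ is below $2^n$), one gets $|A(n)|<g(n)$, i.e.\ $A\in\mathcal S_g$ \emph{in the ground model}. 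The hypothesis then produces $f\in\mathcal F$ with $f(n)\notin A(n)$ for infinitely many $n$; at each such level the event $f(n)\in\dot S(n)$ has measure at most $2^n/g(n)\to 0$, so below any condition of positive measure one can pass to a further condition forcing $f(n)\notin\dot S(n)$ for some large $n$, which kills localization. That is the entire proof, and it is precisely the bridge you flag as ``where the genuine content sits'' and then leave as an unproved assertion (``which in turn follows from $g(n)/2^n\to\infty$ combined with non-$g$-localizability''). Without the definition of $A$ and the counting bound $|A(n)|<g(n)$, your Borel--Cantelli/Fubini endgame has nothing concrete to compute with.

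Separately, the structural reduction you lean on --- that an arbitrary $\mathbb{M}_\kappa$-name for an $h$-slalom is ``covered by finitely many co-point slaloms indexed by finitely many random reals plus a ground-model error slalom of summably small measure'' --- is not a standard fact and is not true as stated. The observation in Section 4 (that an $h$-slalom with $S(n)\subseteq 2^n$ sits below some $2^n\setminus\{f(n)\}$) applies to a single slalom in a fixed model; a \emph{name} $\dot S$ can make $\dot S(n)$ depend on the generic in ways that do not factor through finitely many random reals even up to summable error, since each Boolean value $\llbracket k\in\dot S(n)\rrbracket$ can be an arbitrary element of the measure algebra. Your opening paragraph also conflates the particular name $\dot S$ of Section 4 (which genuinely is the complement of a random real) with a general name, whereas the proposition must handle all names. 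Finally, the detour in which you observe that a $g$-slalom with $g(n)/2^n\to\infty$ localizes everything (so that $g$-destructibility would be vacuous) is a correct reading of why the statement pairs a fat $g$ with the thin $h$, but it does not advance the proof.
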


\begin{proof}
	Let $\dot{S}$ be a $\mathbb{M}_\kappa$-name for an ($h$-)slalom. Define $A\subseteq \omega\times\omega$ by
	\[ A(n) = \{k\colon \lambda_\kappa(\llbracket k\in \dot{S}(n) \rrbracket) > \frac{2^n}{g(n)}\}. \]
	Then $|A(n)|< \frac{g(n)}{2^n}\cdot 2^n = g(n)$. So, there is an $f\in\mathcal{F}$ such that $f\nsubseteq^* A$.
	Let $p\in \mathbb{M}_\kappa$ and $N\in \omega$ be such that 
	\[ p \Vdash ``\forall n>N \ \check{f}(n) \in \dot{S}(n)". \]
	There is $m>N$ such that $\lambda_\kappa(p)>\frac{2^m}{g(m)}$ and $f(m)\notin A(m)$. Then $\lambda_\kappa(\llbracket f(m) \in \dot{S}(n) \rrbracket)\leq \frac{2^m}{g(m)}$ and so
	\[ \emptyset \ne p \setminus \llbracket \check{f}_\alpha(m) \in \dot{S}(m) \rrbracket \Vdash ``\check{f}_\alpha(m) \notin \dot{S}(m)", \]
	a contradiction.
\end{proof}

It follows that we have, in $\mathsf{ZFC}$, families $\mathcal{F} \subseteq \omega^\omega$ of size $\mathrm{add}(\mathcal{N})$ which are not ($h$-)destructible by any $\mathbb{M}_\kappa$: simply consider a family $\mathcal{F}$ as in Theorem~\ref{Bartoszynski} which is not localised by $\mathcal{S}_g$ where $g \in \omega^\omega$ grows fast enough as above.

\begin{thm} \label{nomeasure}
There is a space $K$ satisfying the properties of the space of Theorem \ref{todor} and such that $K$ does not support a measure. Moreover, $K$ is $\sigma$-$n$-linked for every $n$.
\end{thm}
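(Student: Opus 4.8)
The plan is to run Todorčević's construction with a carefully chosen family $\mathcal{A}$ of slaloms. On the one hand, $\mathcal{A}$ should be a $\subseteq^*$-chain which is unbounded in $\mathcal{S}$, so that $K_\mathcal{A}$ inherits all the properties from Theorem~\ref{todor} (non-separability, ccc, linearly- and scatteredly-fibered, countable $\pi$-character); this part is verbatim as before, since the arguments there only use that $(A_\alpha)$ is a $\subseteq^*$-chain which is not bounded by any element of $\mathcal{S}$. On the other hand, to guarantee that $K_\mathcal{A}$ does \emph{not} support a measure, I want to choose the underlying reals so that $\mathcal{A}$ "codes" a family $\mathcal{F} \subseteq \omega^\omega$ of size $\mathrm{add}(\mathcal{N})$ which is not localized by $\mathcal{S}_g$ for some fast-growing $g$ (with $\lim_n g(n)/2^n = \infty$), and then invoke the Proposition just proved: such an $\mathcal{F}$ is not $(h$-)destructible by any $\mathbb{M}_\kappa$. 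Combined with Kamburelis' characterization (Theorem~\ref{kamburelis}) this should force $K_\mathcal{A}$ to carry no strictly positive measure — the point being that if it did, then in the random extension $\mathfrak{T}_\mathcal{A}/\mathrm{Fin}$ would be $\sigma$-centered, which (tracing through the $T_A$-combinatorics exactly as in the proof of Theorem~\ref{todor} and Theorem~\ref{main}) would yield a single slalom in $\mathcal{S}$ bounding all the $A_\alpha$, hence localizing $\mathcal{F}$, contradicting non-destructibility.

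Concretely, I would start from the Kunen–Fremlin-style construction of Theorem~\ref{kunen-fremlin}, but feed it a family $\mathcal{F}$ as in Theorem~\ref{Bartoszynski} which is not localized by $\mathcal{S}_g$ for a $g$ with $g(n)/2^n \to \infty$ (rather than by $\mathcal{S}$ itself). Running the diagonalization produces a $\subseteq^*$-chain $(A_\alpha)_{\alpha < \mathrm{add}(\mathcal{N})} \subseteq \mathcal{S}$ with $f_\alpha \subseteq^* A_\alpha$ for every $\alpha$, so that unboundedness of $\{A_\alpha\}$ in $\mathcal{S}_g$ is inherited from non-localizability of $\mathcal{F}$; in particular $\{A_\alpha\}$ is unbounded in $\mathcal{S}$ as well. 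I then let $\mathcal{A}$ be the closure of $\{A_\alpha\}$ under finite modifications (staying inside $\mathcal{S}$), so that $\mathcal{A}$ is closed under finite unions and $K_\mathcal{A}$ is well-defined. Now suppose toward a contradiction that $\mathfrak{T}_\mathcal{A}/\mathrm{Fin}$ supports a measure; by Theorem~\ref{kamburelis} there is $\kappa$ with $\Vdash_{\mathbb{M}_\kappa}$ "$\mathfrak{T}_\mathcal{A}/\mathrm{Fin}$ is $\sigma$-centered". Working in $V[G]$ and using the $\pi$-base of Proposition~\ref{pi-base}, a $\sigma$-centered decomposition, together with regularity of $\mathrm{add}(\mathcal{N})$, pins a cofinal set of the $A_\alpha$ into one centered piece; the computation in the proof of Theorem~\ref{todor} shows such a centered family must have its members' union lie in $\mathcal{S}$, producing a single slalom $S \in \mathcal{S} \subseteq \mathcal{S}_g$ with $A_\alpha \subseteq^* S$ for cofinally many, hence all, $\alpha$. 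Then $f_\alpha \subseteq^* A_\alpha \subseteq^* S$ for every $\alpha$, so $\mathcal{F}$ is localized by $\mathcal{S}_g$ in $V[G]$ — contradicting that $\mathcal{F}$ is not $(h$-)destructible by $\mathbb{M}_\kappa$.

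Finally, for the $\sigma$-$n$-linkedness: the $T_A$'s satisfy $T_{A \cup B} = T_A \cap T_B$, so any finite collection $T_{A_1}, \dots, T_{A_n}$ has $\bigcap_i T_{A_i} = T_{A_1 \cup \cdots \cup A_n}$, which is infinite precisely when $A_1 \cup \cdots \cup A_n \in \mathcal{S}$. Since $\mathcal{A}$ came from a $\subseteq^*$-chain, any $n$ of its members have a union that is a finite modification of their $\subseteq^*$-largest; using the $\pi$-base elements $T_B \cap T_{(T,m)}$, I would cover this $\pi$-base by countably many pieces indexed by "which $\mathcal{W}$-bound and which stem $(T,m)$", exactly as in the $\sigma$-centered argument of Theorem~\ref{main}, except that now I only demand $n$-wise compatibility: I can afford to bundle together, into a single linked class, the $T_B$'s whose pairwise (indeed $n$-wise) unions stay in $\mathcal{S}$, which happens automatically below any fixed level $g(m_0)$ in the chain. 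The bookkeeping — choosing the countable index set for the classes so that every $n$-element subset of each class has a common lower bound in $\mathfrak{T}_\mathcal{A}/\mathrm{Fin}$ — is the routine part; the genuine obstacle is the first half, namely making sure the fast-growing $g$ used to defeat destructibility is still compatible with $\mathcal{A}$ being a $\subseteq^*$-chain inside $\mathcal{S}$ (not $\mathcal{S}_g$!) so that the space-level properties of Theorem~\ref{todor} survive. This forces the slaloms $A_\alpha$ themselves to be genuine $h$-slaloms while the coded reals $f_\alpha$ escape every $g$-slalom, which is exactly what the Bartoszyński-style family delivers.
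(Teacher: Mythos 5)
Your first half (the non-existence of a supporting measure) is essentially the paper's argument: take a Bartoszy\'nski-type family $\mathcal{F}$ of size $\mathrm{add}(\mathcal{N})$ not localized by $\mathcal{S}_g$ for $g$ with $g(n)/2^n\to\infty$, push it through the Kunen--Fremlin diagonalization so that $f_\alpha\subseteq^* A_\alpha$, and then observe that $\sigma$-centredness of $\mathfrak{T}_{\mathcal{A}}/\mathrm{Fin}$ in a random extension would, via the computation in Theorem \ref{todor}, produce a single $S\in\mathcal{S}$ bounding cofinally many (hence all) $A_\alpha$ and therefore localizing $\mathcal{F}$ by $\mathcal{S}=\mathcal{S}_h$, contradicting the non-$h$-destructibility proposition. (Minor slip: the contradiction you want is localization by $\mathcal{S}_h$, not by $\mathcal{S}_g$; since the bounding slalom lies in $\mathcal{S}\subseteq\mathcal{S}_g$ this is harmless.) Up to here the proposal is correct and matches the paper.

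The gap is in the $\sigma$-$n$-linkedness part, which you dismiss as ``routine bookkeeping'' while identifying the first half as the genuine obstacle; the paper's proof makes clear it is the other way around. Your proposed mechanism is to bundle $\pi$-base elements by stem and rely on the fact that any $n$ members of the $\subseteq^*$-chain have union equal to a \emph{finite modification} of the $\subseteq^*$-largest of them. But a finite modification of an element of $\mathcal{S}$ need not lie in $\mathcal{S}$ --- indeed this is precisely the engine of the non-$\sigma$-centredness proof in Theorem \ref{todor}, where finitely many $A_{\alpha_i}$ fill an entire column $\{m\}\times 2^m$ and the corresponding $T_{A_{\alpha_i}}$ have finite intersection. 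Worse, the finitely many ``bad'' rows where $A_{\alpha_i}(m)\not\subseteq A_{\alpha_n}(m)$ depend on the \emph{pair} $(\alpha_i,\alpha_n)$, not on the individual elements, so they cannot be absorbed into a countable index set attached to single elements. The missing ingredient is that the Kunen--Fremlin construction actually delivers $\mathcal{A}\subseteq\mathcal{Z}$, i.e.\ $\tfrac{1}{2^m}|A(m)|\to 0$ for each $A\in\mathcal{A}$. Given $n$, one associates to each $A$ the pair $(k_A, A\cap([0,k_A)\times\omega))$ where $\tfrac{1}{2^m}|A(m)|<\tfrac{1}{n}$ for all $m\geq k_A$; there are only countably many such pairs, elements sharing a pair agree below $k_A$, and above $k_A$ the union of any $n$ of them has each row of size less than $n\cdot\tfrac{2^m}{n}=2^m$, so the union stays in $\mathcal{S}$ without any appeal to the chain structure. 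Your proposal never invokes membership in $\mathcal{Z}$, and without it the claimed countable $n$-linked decomposition does not exist as described.
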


\begin{proof} Assume $\mathcal{F}$ is not $h$-destructible by any $\mathbb{M}_\kappa$. Then we can repeat the construction of Theorem \ref{kunen-fremlin} obtaining a $\subseteq^*$-increasing family $\mathcal{A}\subseteq \mathcal{Z}$ which is not
	destructible by any $\mathbb{M}_\kappa$. Then we can repeat the argument from the proof of Theorem \ref{todor} to show that $\mathfrak{T}_\mathcal{A}/{\rm Fin}$ (here and for the rest of this proof we are actually assuming for notational convenience that $\mathcal{A}$ is the closure under finite modifications of what it was in the last sentence) cannot be $\sigma$-centered in a forcing extension by $\mathbb{M}_\kappa$ for any $\kappa$. By Theorem \ref{kamburelis} the algebra $\mathfrak{T}_\mathcal{A}/{\rm Fin}$ does not support a measure.
	
We now show that $\mathfrak{T}_\mathcal{A}/{\rm Fin}$ is $\sigma$-$n$-linked for every $n$. As before, it suffices to show that the $\pi$-base for this algebra given to us by Proposition~\ref{pi-base} is $\sigma$-$n$-linked for every $n$ (note that we have implicitly used this proposition in this proof already, and it is easy to check that its antecedent is satisified by $\mathcal{A}$). In fact, it can easily be seen that this itself would follow if we can show for any $n$ that there is a countable covering $\mathcal{A} = \bigcup_m \mathcal{C}_m$ where for every $m$ and $A_1, A_2, \ldots A_n$ from $\mathcal{C}_m$, there is some $S \in \mathcal{S}$ such that $S \supseteq \bigcup \{A_1, A_2, \ldots A_n\}$.

But this is easily accomplished. Let $n$ be fixed. With each $A \in \mathcal{A}$ we associate a pair $(k_A, T_A)$ such that for every $m \geq k_A$, $\frac{1}{2^m}|A(m)| < \frac{1}{n}$, and $T_A = A \cap ([0, k_A) \times \omega)$. Since there are only countably many pairs and every $A \in \mathcal{A}$ can be associated with some such pair, it is clear that these pairs give rise to a countable partitioning of $\mathcal{A}$. Let $A_1, A_2, \ldots A_n$ be associated with the same pair, say $(k, T)$. Let $S = \bigcup \{A_1, A_2, \ldots A_n\}$. Now, it is clear that $S \in \mathcal{S}$: if $m < k$, then $S(m) = T(m)$, from which it follows that $|S(m)| < 2^m$; on the other hand, if $m \geq k$, then $S(m)= \bigcup \{A_1(m), A_2(m),\ldots A_n(m)\}$, so $|S(m)| \leq |A_1(m)|+|A_2(m)|+ \ldots |A_n(m)| < \frac{2^m}{n}\cdot n = 2^m$.
\end{proof}

We will show that using this technique we can find a space supporting no measure and satisfying Fremlin's property (*), a chain condition very close to separability. 

\begin{defi}[see \cite{Fremlin-Maharam}]
	A Boolean algebra $\mathfrak{A}$ has \emph{property (*)} if $\mathfrak{A}^+ = \bigcup \mathcal{C}_n$, where for each $n$ and for each infinite $\mathcal{C}\subseteq \mathcal{C}_n$, the family $\mathcal{C}$ can be furthermore refined to an infinite
	centered family.
\end{defi}

Define now the following family
\[ \mathcal{J} = \{S\subseteq \omega \times \omega\colon S(n)\subseteq 2^n \mbox{ for each }n\mbox{ and } \lim_n \frac{1}{2^n}|S(n)| = 0 \}. \]
Recall that the density zero ideal $\mathcal{D}$ (see for example \cite{Farah}) is defined by
\[ \mathcal{D} = \{A\subseteq \omega\colon \lim_n \frac{|A\cap [2^n, 2^{n+1})|}{2^n} = 0 \}. \]
Let 
\[ \mathcal{V} = \mathcal{J} \cap \mathcal{S}. \] 
Clearly, $\mathcal{V}$ is to the density zero ideal what $\mathcal{W}$ is to the summable ideal, and the same natural enumeration function witnesses this correspondence. 

\begin{thm}[Brendle-Yatabe, see \cite{Brendle-Yatabe} and \cite{Elekes}]
The density zero ideal is random-indestructible, i.e., 
\[ \Vdash_\mathbb{M} ``\mbox{There is no co-infinite }\dot{X}\mbox{ such that }\check{D}\subseteq^* \dot{X}\mbox{ for each }\check{D}\in \check{\mathcal{D}}." \]
\end{thm}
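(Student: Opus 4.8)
The plan is to argue by contradiction, reduce the statement to a purely combinatorial fact about $\mathcal{D}$, and then convert it back by a Borel--Cantelli argument. Suppose $p\in\mathbb{M}$ and $\dot{X}$ is an $\mathbb{M}$-name with $p\Vdash$ ``$\dot{X}$ is co-infinite and $\check{D}\subseteq^{*}\dot{X}$ for every $D\in\check{\mathcal{D}}$''. Put $\dot{Y}=\omega\setminus\dot{X}$, so $p\Vdash$ ``$\dot{Y}$ is infinite and $\check{D}\cap\dot{Y}$ is finite for every $D\in\check{\mathcal{D}}$''; it suffices to produce, in the ground model, some $D\in\mathcal{D}$ together with a condition $q\leq p$ forcing ``$\check{D}\cap\dot{Y}$ is infinite''. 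Work with $\mathbb{M}=\mathrm{Bor}(2^{\omega})/_{\lambda=0}$. Since random forcing is $\omega^{\omega}$-bounding, after shrinking $p$ fix a strictly increasing $g\in\omega^{\omega}$ in $V$ with $p\Vdash$ ``$\dot{e}(n)\leq\check{g}(n)$ for all $n$'', where $\dot{e}$ enumerates $\dot{Y}$ increasingly; thus below $p$ each $\dot{e}(n)$ ranges over the finite set $[n,g(n)]$.

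For $k\in\omega$ let $\beta_{k}=\lambda\bigl(p\cap\llbracket\check{k}\in\dot{Y}\rrbracket\bigr)$; this is a ground-model real $\leq\lambda(p)$, and since $p=\bigcup_{k\geq N}\bigl(p\cap\llbracket\check{k}\in\dot{Y}\rrbracket\bigr)$ modulo null for every $N$ (as $p$ forces $\dot{Y}$ infinite), we have $\sum_{k\geq N}\beta_{k}\geq\lambda(p)>0$ for all $N$, hence $\sum_{k}\beta_{k}=\infty$. I claim there is $D\in\mathcal{D}$ with $\sum_{k\in D}\beta_{k}=\infty$. Writing $I_{j}=[2^{j},2^{j+1})$, $b_{j}=\sum_{k\in I_{j}}\beta_{k}$, $B_{j}=\sum_{i\leq j}b_{i}$, we have $\sum_{j}b_{j}=\infty$, so $B_{j}\to\infty$ and, by the Abel--Dini theorem, $\sum_{j}b_{j}/B_{j}=\infty$. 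Let $D\cap I_{j}$ consist of $\lceil 2^{j}\min(1,1/B_{j})\rceil$ elements of $I_{j}$ of largest $\beta$-value. Then $|D\cap I_{j}|/2^{j}=\min(1,1/B_{j})+O(2^{-j})\to 0$, so $D\in\mathcal{D}$; and since the $\lceil 2^{j}\min(1,1/B_{j})\rceil$ largest among the $2^{j}$ numbers $\{\beta_{k}:k\in I_{j}\}$ sum to at least $\frac{|D\cap I_{j}|}{2^{j}}b_{j}$, we get $\sum_{k\in D}\beta_{k}\geq\sum_{j}\min(1,1/B_{j})b_{j}\geq\sum_{B_{j}\geq 1}b_{j}/B_{j}=\infty$.

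It remains to turn ``$\sum_{k\in D}\beta_{k}=\infty$'' into a forcing statement, and this is the crux. The sum being infinite only says that the expected size of $\dot{Y}\cap\check{D}$ with respect to $\lambda$ on $p$ is infinite, which is perfectly consistent with $\dot{Y}\cap\check{D}$ being finite almost everywhere below $p$; so one needs some independence. The way I would handle this is to build $D$ together with a fusion sequence $p=q_{0}\geq q_{1}\geq\cdots$ in $\mathbb{M}$: at stage $n$ allocate a fresh finite block of coordinates of $2^{\omega}$, choose the portion of $D$ in the corresponding range of $\omega$ (subject to the density budget of the previous paragraph), and approximate the Borel sets $\llbracket\check{k}\in\dot{Y}\rrbracket$ involved by clopen sets supported on that block, absorbing the summable approximation errors into the $q_{n}$'s -- the bound $\dot{e}(n)\in[n,g(n)]$ keeping only finitely many coordinates relevant at each stage. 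Once the events ``$\dot{Y}$ meets the $n$-th portion of $D$'' are (essentially) independent, their probabilities sum to infinity by the combinatorial core, so the second Borel--Cantelli lemma yields a positive-measure $q_{\infty}\leq p$ forcing $\check{D}\cap\dot{Y}$ infinite, contradicting $p\Vdash$ ``$\check{D}\cap\dot{Y}$ finite''. The main obstacle is exactly this fusion: a Borel subset of $2^{\omega}$ genuinely depends on all coordinates, so $\llbracket\check{k}\in\dot{Y}\rrbracket$ cannot literally be made block-supported, and the delicate part is interleaving the clopen approximations with the fusion while simultaneously keeping $D$ density zero throughout. A more robust alternative is to forgo independence, keep the natural filtration by ``coordinates examined so far'', and apply L\'{e}vy's conditional Borel--Cantelli lemma (needing $\sum_{n}\lambda(\dot{e}(n)\in\check{D}\mid\text{past})=\infty$ almost surely), or to estimate the pairwise correlations $\lambda(\llbracket\check{k}\in\dot{Y}\rrbracket\cap\llbracket\check{l}\in\dot{Y}\rrbracket)$ and invoke the Kochen--Stone lemma; each route rests on the same feature of $\mathcal{D}$ used above, namely that the densities of a density zero set may tend to $0$ arbitrarily slowly, so such a set can still catch a divergent weighted sum.
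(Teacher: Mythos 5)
First, a point of reference: the paper does not prove this theorem; it is quoted from Brendle--Yatabe and Elekes, so there is no in-paper argument to compare yours against. Judged on its own terms, the first two paragraphs of your proposal are correct: the reduction to producing $D\in\mathcal{D}$ and $q\leq p$ forcing $|\check{D}\cap\dot{Y}|=\aleph_0$, the observation that $\beta_k=\lambda(p\cap\llbracket\check{k}\in\dot{Y}\rrbracket)$ satisfies $\sum_{k\geq N}\beta_k\geq\lambda(p)$ for every $N$, and the Abel--Dini extraction of a density zero $D$ with $\sum_{k\in D}\beta_k=\infty$ are all sound (the last is a nice quantitative form of the tallness of $\mathcal{D}$).

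The third paragraph, however, is not a proof but a list of candidate strategies, and you correctly identify it as the crux. The gap is not merely unfinished detail: the architecture of ``choose $D$ from the marginals $\beta_k$ alone, then upgrade infinite expectation to positive probability'' cannot be completed as stated. After the second paragraph your argument never again uses the hypothesis that $\dot{Y}$ is forced to meet every ground-model density zero set finitely; it uses only $\sum_{k\in D}\beta_k=\infty$. But that hypothesis does not imply $\lambda\bigl(\limsup_{k\in D}\llbracket\check{k}\in\dot{Y}\rrbracket\bigr)>0$: one can arrange a name in which the events $\llbracket\check{k}\in\dot{Y}\rrbracket$ for the selected $k\in D$ are nested with measures $\approx 1/i$, so that their sum diverges while their $\limsup$ is null (and in this situation Kochen--Stone gives a vanishing lower bound and no independence can be manufactured, so none of your three proposed repairs applies to this particular $D$). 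Hence $D$ must be constructed \emph{interactively with the name}, not read off from the marginals; this is exactly what the known proof does, using the fact that $\lambda\bigl(p\setminus\bigcup_{N\leq k<M}\llbracket\check{k}\in\dot{Y}\rrbracket\bigr)\to 0$ as $M\to\infty$ together with a finitary covering/intersection lemma (Elekes's covering theorem) to select a sparse subfamily whose $\limsup$ retains positive measure. Your fusion sketch points in that direction, but until it is carried out --- including the bookkeeping that keeps $D$ density zero while the blocks are being chosen against the name --- the proof is incomplete at its essential step.
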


\begin{prop}\label{nomeasure2}
	The Boolean algebra $\mathfrak{T}_\mathcal{V}/Fin$ does not support a measure.
\end{prop}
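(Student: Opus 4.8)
The plan is to invoke Kamburelis's characterization (Theorem~\ref{kamburelis}): $\mathfrak{T}_\mathcal{V}/{\rm Fin}$ supports a measure if and only if it becomes $\sigma$-centered after forcing with some $\mathbb{M}_\kappa$. Since a subalgebra of a $\sigma$-centered Boolean algebra is again $\sigma$-centered (the given countable partition into centered sets restricts, and ``having nonzero meet'' is absolute), it suffices to show that the subalgebra $\mathfrak{T}^*_\mathcal{V}/{\rm Fin}$ (cf.\ Remark~\ref{T^*2}) is not $\sigma$-centered in any $\mathbb{M}_\kappa$-extension. This is the same pattern as the proof of Theorem~\ref{nomeasure}, with the role of ``a family not $h$-destructible by any $\mathbb{M}_\kappa$'' now played by the random-indestructibility of $\mathcal{D}$.

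So I would fix $\kappa$, let $G$ be $\mathbb{M}_\kappa$-generic over $V$, and suppose towards a contradiction that in $V[G]$ we have $\mathfrak{T}^*_\mathcal{V}/{\rm Fin}=\bigcup_{m}\mathcal{C}_m$ with each $\mathcal{C}_m$ centered. Each generator $[T_W]$, $W\in\mathcal{V}^V$, lies in some $\mathcal{C}_m$, so writing $\mathcal{B}_m=\{W\in\mathcal{V}^V:[T_W]\in\mathcal{C}_m\}$ we get $\mathcal{V}^V=\bigcup_m\mathcal{B}_m$ with each $\{[T_W]:W\in\mathcal{B}_m\}$ centered. By the ``compactness'' step from the proof of Theorem~\ref{todor} (if $|\bigcup_{W\in\mathcal{B}_m}W(n)|\geq 2^n$ at some level $n$, pick $W_0,\dots,W_{2^n}\in\mathcal{B}_m$ whose level-$n$ parts comprise $2^n{+}1$ points; then $\bigcap_{i\leq 2^n}[T_{W_i}]=[T_{W_0\cup\dots\cup W_{2^n}}]=\emptyset$, contradicting centredness) the set $C_m:=\bigcup\mathcal{B}_m$, an element of $V[G]$, lies in $\mathcal{S}$, and clearly $C_m(n)\subseteq 2^n$ for every $n$. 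Now transfer to $\omega$ via the enumeration $f$ sending $\{n\}\times 2^n$ onto $[2^n,2^{n+1})$: since $|C_m(n)|<2^n$, the set $X_m:=f[C_m]$ misses a point of each interval $[2^n,2^{n+1})$, hence is co-infinite; and every $D\in\mathcal{D}^V$ coincides off a finite set with $f[W]$ for some $W\in\mathcal{V}^V$ (delete one point from each of the finitely many blocks that $D$ fills entirely and take the $f$-preimage), which $W$ belongs to some $\mathcal{B}_m$, so $D\subseteq^* X_m$. Thus in $V[G]$ we have produced countably many co-infinite sets $X_m$ with $\mathcal{D}^V=\bigcup_m\{D\in\mathcal{D}^V:D\subseteq^* X_m\}$.

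The final step — and the one I expect to be the main obstacle — is to convert this countable covering into a single co-infinite $X\in V[G]$ with $D\subseteq^* X$ for every $D\in\mathcal{D}^V$, which the theorem of Brendle and Yatabe forbids. This needs more than the bare statement of that theorem. The naive candidate $X=\bigcup_m X_m$ is forced to be \emph{co-finite}: if its complement $\bigcap_m(\omega\setminus X_m)$ were infinite, the (weak) tallness of $\mathcal{D}^V$ in $V[G]$ supplied by Brendle and Yatabe would produce $D\in\mathcal{D}^V$ meeting it — hence every $\omega\setminus X_m$ — infinitely, contradicting the covering. A crude diagonal $C^*(n)=C_{k(n)}(n)$ is co-infinite but fails to $\subseteq^*$-contain the members of $\mathcal{V}^V$, since a $W\subseteq C_{m(W)}$ need not sit inside $C_m$ for $m\neq m(W)$. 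The route I would pursue exploits that $\mathcal{D}$ is a tall analytic $P$-ideal together with the $\omega^\omega$-bounding property of $\mathbb{M}_\kappa$: dominate the increasing enumerations of the sets $\omega\setminus X_m$ by a single ground-model function and feed this into a $P$-ideal pseudo-union argument to locate the required $X$ (equivalently, to rule the covering out directly). Once such an $X$ is found, Brendle and Yatabe's theorem yields the contradiction and finishes the proof.
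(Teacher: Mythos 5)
Your setup is sound and runs parallel to the paper's proof up to the point you flag: Kamburelis' theorem, the compactness argument extracting slaloms $C_m$ from the centered pieces, and the translation to countably many co-infinite sets $X_m$ with $\mathcal{D}^V=\bigcup_m\{D\in\mathcal{D}^V\colon D\subseteq^* X_m\}$ are all correct, as is your observation that the bare Brendle--Yatabe statement (a single pseudo-union) does not immediately apply and that $\bigcup_m X_m$ is forced to be co-finite. But as written the argument is incomplete: the amalgamation step is only a plan. Here is how to close it along exactly the lines you gesture at. For each $m$ the set $Y_m=\omega\setminus X_m$ is infinite, so tallness preservation (the Brendle--Yatabe theorem, which must first be upgraded from $\mathbb{M}$ to $\mathbb{M}_\kappa$ by a nice-name/ccc argument, as the paper does --- a step you should also include) yields $D_m\in\mathcal{D}^V$ with $D_m\cap Y_m$ infinite. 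The sequence $(D_m)_{m\in\omega}$ lies in $V[G]$, but since $\mathbb{M}_\kappa$ is ccc there is a countable $\mathcal{E}\in V$ with $\{D_m\colon m\in\omega\}\subseteq\mathcal{E}\subseteq\mathcal{D}$. As $\mathcal{D}$ is a P-ideal in $V$, fix $D\in\mathcal{D}^V$ with $E\subseteq^* D$ for all $E\in\mathcal{E}$; then $D\cap Y_m\supseteq^* D_m\cap Y_m$ is infinite for every $m$, so $D\subseteq^* X_m$ fails for every $m$, contradicting your covering. (You do not actually need $\omega^\omega$-bounding here; the ccc covering of countable sets by ground-model countable sets, plus the P-ideal property in $V$, suffices.)

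For comparison, the paper argues in the reverse order: it reduces (citing the pattern of Theorem \ref{nomeasure}) to showing that no single slalom of $V[G]$ $\subseteq^*$-bounds $\mathcal{V}^V$, and proves that single-slalom statement by choosing a selector $f$ avoiding the slalom, transferring its graph to an infinite co-infinite subset of $\omega$, and applying tallness preservation once. Your worry is legitimate there as well: the passage from countably many bounding slaloms (which is what $\sigma$-centeredness actually hands you) to a single one is precisely the countable-to-one amalgamation, and for $\mathcal{V}$ --- unlike the $\subseteq^*$-chains of Theorems \ref{todor} and \ref{nomeasure}, where regularity of $\mathrm{add}(\mathcal{N})$ does the job --- it genuinely requires the P-ideal argument above. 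So your route is essentially the paper's with that implicit reduction made explicit; once you insert the paragraph above, the proof is complete.
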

\begin{proof}
First, notice that the density zero ideal is not destructible by $\mathbb{M}_\kappa$ for any cardinal $\kappa$. Indeed, for any new subset of $\omega$ in the generic extension, we can consider a nice name for this set, and because $\mathbb{M}_\kappa$ has the ccc, this name is decided by countably many conditions, so each new subset (potentially destroying the ideal) added by $\mathbb{M}_\kappa$ can be added by a single random real. See also \cite[Remark 3.4]{Elekes}.

As in Theorem~\ref{nomeasure}, it suffices to show that $\mathcal{V}$ is indestructible by forcing with any $\mathbb{M}_\kappa$. So, let $G$ be generic for $\mathbb{M}_\kappa$ over $V$. We will work in $V[G]$. Let $\dot{S} \in \dot{\mathcal{S}}$
(here we are referring to the interpretations of these names in the generic extension, but retaining the checks and dots so as to avoid confusion). Let $\dot{f} \in \dot{\mathcal{X}}$ be such that $\dot{S}(n) \subseteq 2^n\setminus \{\dot{f}(n)\}$
for every $n$. Then, corresponding to $\dot{f}$ there is an infinite and co-infinite subset $\dot{X}$  of $\omega$ such that for every $n$, $\dot{X}\cap [2^n, 2^{n+1}) = 1$. By the indestructibility of $\check{\mathcal{D}}$, it follows that there
	is an infinite $\check{Y}\subseteq \dot{X}$ such that $\check{Y} \in \check{\mathcal{D}}$. Corresponding to $\check{Y}$, there is some infinite $\check{T} \in \check{\mathcal{V}}$ such that $\check{T}(n)\subseteq \{\dot{f}(n)\}$ for every $n$. Then
	$\check{T}\nsubseteq^* \dot{S}$ and so it follows that $\dot{S}$ cannot localise $\check{\mathcal{V}}$. 

Altogether, $\mathcal{V}$ is indestructible by forcing with any $\mathbb{M}_\kappa$, and so $\mathfrak{T}_\mathcal{V}/Fin$ does not support a measure.

\end{proof}

\begin{thm} The Boolean algebra $\mathfrak{T}_\mathcal{V}/Fin$ has property (*).
\end{thm}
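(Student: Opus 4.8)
The plan is to mirror the proof of $\sigma$-$n$-linkedness in Theorem~\ref{nomeasure} (itself following \cite[Theorem~8.4]{Todorcevic}) and to verify property~(*) on the $\pi$-base of $\mathfrak{T}_\mathcal{V}/{\rm Fin}$ given by Proposition~\ref{pi-base}. Passing from the algebra to a $\pi$-base is routine: a countable witness of property~(*) for a $\pi$-base pushes forward to one for the whole algebra by composing with a fixed $\pi$-map, treating separately the two cases according to whether the chosen $\pi$-base representatives of a given infinite family take finitely or infinitely many values. Since $\mathcal{V}=\mathcal{J}\cap\mathcal{S}$ is closed under finite unions lying in $\mathcal{S}$, that $\pi$-base is $\{T_B\cap T_{(T,n)}\colon B\in\mathcal{V},\ (T,n)\in\Omega\}/{\rm Fin}\setminus\{\emptyset\}$; using the identity $T_B\cap T_{(S,n)}=T_{B'}\cap T_{(S,n)}$ for $B'=S\cup(B\cap([n,\infty)\times\omega))$, together with the freedom to pass to a deeper prefix, I may even restrict to the sub-$\pi$-base $\mathcal{P}'$ of those $T_B\cap T_{(T,k)}$ for which $B\cap(k\times 2^k)=T$ and $\tfrac{1}{2^m}|B(m)|<\tfrac12$ for all $m\geq k$.

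For the covering I would use, for each $(T,k)\in\Omega$, the class
\[ \mathcal{Q}_{(T,k)}=\Big\{T_B\cap T_{(T,k)}\in\mathcal{P}'\colon B\cap(k\times 2^k)=T,\ \tfrac{1}{2^m}|B(m)|<\tfrac12\text{ for all }m\geq k\Big\}, \]
which is precisely the covering used for $\sigma$-$n$-linkedness but with the single threshold $\tfrac12$; these countably many classes cover $\mathcal{P}'$.

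It remains to show that every infinite $\mathcal{C}\subseteq\mathcal{Q}_{(T,k)}$ has an infinite centered refinement. So fix infinitely many $B_l\in\mathcal{V}$ with $B_l\restriction[0,k)=T$ and $\tfrac{1}{2^m}|B_l(m)|<\tfrac12$ for $m\geq k$. First I would build, by a compactness (König's lemma) argument on the finitely branching tree of common finite prefixes $\{B_l\cap(N\times 2^N)\colon l\in\omega\}$ ($N\geq k$), a branch $q_k\subseteq q_{k+1}\subseteq\cdots$ such that for every $N$ infinitely many $B_l$ satisfy $B_l\cap(N\times 2^N)=q_N$; its union $C^\infty$ is then a slalom with $C^\infty\restriction[0,k)=T$ and $\tfrac{1}{2^m}|C^\infty(m)|<\tfrac12$ for $m\geq k$. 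Then I would recursively pick $l_1<l_2<\cdots$ together with levels $N_1<N_2<\cdots$ so that $B_{l_i}$ follows $C^\infty$ far enough — not merely through $q_{N_i}$, but up to the level beyond which its own density has become negligible, i.e. $\tfrac{1}{2^m}|B_{l_i}(m)|<2^{-i-2}$ for all $m\geq N_i$ — which is possible exactly because each $B_l$ has density tending to $0$. Shrinking $T_{B_{l_i}}\cap T_{(T,k)}$ to $T_{B_{l_i}\cup C^\infty}\cap T_{(q_{N_i},N_i)}$ (a nonzero element below it, since $B_{l_i}\cup C^\infty$ still has density $<1$ past $k$ by the choice of $C^\infty$ and the threshold), one obtains a family whose finite meets are nonzero: the prefixes $q_{N_i}$ form a $\subseteq$-chain, hence their conditions are mutually consistent, and above $\max$ of the relevant $N_{l_i}$ all the pertinent densities have decayed geometrically and so sum to less than $1$, which lets one exhibit a common slalom witnessing each finite meet. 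Thus this shrunk family is an infinite centered refinement of $\mathcal{C}$.

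The step I expect to be the main obstacle is the recursive selection above: one has to reconcile two competing requirements — that each chosen prefix $q_{N_i}$ be deep enough that the tail of $B_{l_i}$ beyond it is negligible (so that finite unions remain slaloms at large levels), and that $B_{l_i}$ agree with $C^\infty$ up to that same level (so that the prefix conditions of the shrunk elements are mutually compatible and no ``middle range'' of $B_{l_i}$ escapes the common slalom). Showing that members with both features exist in abundance inside each $\mathcal{Q}_{(T,k)}$ is the technical heart of the proof, and it is exactly here that the full hypothesis $\lim_m\tfrac{1}{2^m}|B(m)|=0$ — rather than a mere uniform bound such as $\tfrac12$, which would yield only $\sigma$-$n$-linkedness — is used; the rest is bookkeeping with the definitions of $T_B$, $T_{(T,n)}$ and $\Omega$ and finite pigeonhole.
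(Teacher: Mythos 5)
Your overall strategy matches the paper's: partition the canonical $\pi$-base of $\mathfrak{T}_\mathcal{V}/{\rm Fin}$ according to a finite initial segment plus a uniform density threshold, and then, given an infinite subfamily of one piece, run a fusion argument extracting an infinite subfamily whose union above the common prefix is still a slalom (which yields centeredness directly, since finite intersections of the corresponding $T$-sets are then infinite). The gap is exactly at the step you flag as the technical heart: the recursive selection of $l_i,N_i$ with $B_{l_i}\cap(N_i\times 2^{N_i})=q_{N_i}$ \emph{and} $\tfrac{1}{2^m}|B_{l_i}(m)|<2^{-i-2}$ for all $m\geq N_i$ need not be possible. Concretely, fix $C$ with $|C(m)|=\lfloor 2^m/18\rfloor$ for $m\geq k$, and for $l>k$ let $B_l$ equal $T$ below $k$, equal $C$ on $[k,l)$, equal on $[l,2l)$ some $D_l(m)$ of the same size as $C(m)$ but disjoint from it, and be empty from level $2l$ on. Each $B_l$ lies in $\mathcal{V}$ and in $\mathcal{Q}_{(T,k)}$ (density at most $1/18<1/2$ and tending to $0$), and the classes $[T_{B_l}\cap T_{(T,k)}]$ are pairwise distinct. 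The only branch carrying infinitely many $B_l$ through every node is $C^\infty=T\cup C$, so your agreement requirement forces $N_i\leq l_i$, while for $i\geq 3$ the density requirement forces $N_i\geq 2l_i$ (the density of $B_{l_i}$ on $[l_i,2l_i)$ is about $1/18>2^{-i-2}$); the recursion gets stuck. Note the family itself is harmless — choosing $l_{j+1}>2l_j$ makes the deviation blocks disjoint and the union a slalom — but your selection criterion cannot find this subfamily. The paper's proof repairs precisely this point: instead of demanding agreement with a single branch, it covers each block $[k_i,k_{i+1})$ by one small set $T_i$ of density $<1/3$ containing the restrictions of \emph{infinitely many} members (a pigeonhole over the finitely many admissible covers, which is where the threshold $1/9$ rather than $1/2$ is used), and only asks the $i$-th selected member to have density $<3^{-(i+1)}$ beyond $k_{i+1}$. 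Exact agreement is replaced by common containment in a small set, which decouples the two competing requirements you describe.

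A secondary problem: the shrunk conditions $T_{B_{l_i}\cup C^\infty}\cap T_{(q_{N_i},N_i)}$ need not belong to $\mathfrak{T}_\mathcal{V}$ at all, because $C^\infty$ is only guaranteed to have density bounded by $1/2$, not tending to $0$ (it is a limit of prefixes of different $B_l$'s), so $B_{l_i}\cup C^\infty$ need not lie in $\mathcal{V}$ and $T_{B_{l_i}\cup C^\infty}$ need not be generated by the $T_A$ with $A\in\mathcal{V}$. Property (*) requires the refining centered family to live in $\mathfrak{A}^+$. This would be harmless only because the shrinking is unnecessary: had your selection succeeded, the union $\bigcup_i B_{l_i}$ would already have density below $1/2+1/4$ at every level $\geq k$, so the unshrunk subfamily $\{T_{B_{l_i}}\cap T_{(T,k)}\}$ would itself be centered, with $C^\infty$ serving purely as an auxiliary object in the estimate — exactly the role the sets $T_i$ and the witnessing slalom $V$ play in the paper, where they are never required to lie in $\mathcal{V}$.
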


\begin{proof} As before, it suffices to find a countable covering $\mathcal{V} = \bigcup_n \mathcal{C}_n$ such that for every $n$ and every infinite $\mathcal{C} \subseteq \mathcal{C}_n$, there is an $S \in \mathcal{S}$ such that for infinitely many $T \in \mathcal{C}$, $T \subseteq S$.

So, first we get the sets $\mathcal{C}_n$. For each $A \in \mathcal{V}$ we fix a pair $(k_A, U_A)$ such that $k_A\in \omega\setminus\{0\}$ is such that for every $m \geq k_A$ we have $\frac{1}{2^m}|A(m)| < \frac{1}{9}$, and such that $U_A = A \cap
([0, k_A)\times \omega)$. As there are only countably many such pairs which are admissible, and each element is associated with some such pair, we get a countable partitioning $\mathcal{V} = \bigcup_n \mathcal{C}_n$ such that any two elements of $\mathcal{V}$ in the same piece of the partition agree about the pair. We claim that this countable partitioning witnesses property (*).

	Fix some $n'$, and the pair $(k,U)$ associated with $\mathcal{C}_{n'}$. Let $\mathcal{C} \subseteq \mathcal{C}_{n'}$ be a countably infinite subset, and we assume that we have fixed some enumeration of its elements, $\mathcal{C}=\{S_0, S_1, \ldots\}$. 

We shall need the following observation.

\begin{claim}
	Let $Q\subseteq \omega$ be infinite, $I\subseteq \omega$ finite such that $k \leq I$, and $n\in Q$. Then there is $T\in \mathcal{V}$,
	$T\subseteq (I \times \omega)$ and an infinite $Q'\subseteq Q$ including $n$ such that 
	\[ \forall m\in Q' \ \forall j\in I \ S_m(j)\subseteq T(j), \]
	and for each $j \in I$, $\frac{1}{2^j}|T(j)| < \frac{1}{3}$.
\end{claim}

\begin{why}
	Note that the set of $T\in \mathcal{V}$, such that 
\begin{enumerate}
    \item $T\subseteq I\times \omega$,
	\item $T(j)\subseteq 2^j$ for each $j\in I$,
    \item for every $j \in I$, $\frac{1}{2^j}|T(j)| < \frac{1}{3}$,
	\item $S_n \cap (I\times \omega) \subseteq T$
\end{enumerate}
is finite. But for every $m \in Q$, there is some such $T$ so that $S_m\cap (I\times \omega) \subseteq T$ (because $\frac{1}{2^j}|S_m(j)|<1/9$ for each $j\geq k$). So one of these $T$ must work for infinitely many $m \in Q$, including $n$. 
\end{why}

Now, we would like to find an infinite $N \subseteq \omega$ such that $\bigcup \{S_n \colon n \in N\} \in \mathcal{S}$. We shall build $N$ inductively, and we shall need to keep track of some extra information during this inductive construction. We will construct $(n_i,k_i,T_i,Q_i)_{i\in\omega}$ such that for every $i$
\begin{enumerate}
	\item $k_i\in\omega$ and $k_{i+1}>k_i$,
	\item $T_i \subseteq [k_i,k_{i+1})\times \omega$, $\frac{1}{2^j} |T_i(j)| < \frac{1}{3}$ for every $j\in [k_i,k_{i+1})$,\label{T_n}
	\item $Q_i\in [\omega]^\omega$, $Q_{i+1}\subseteq Q_i$,
	\item $n_{i}\in Q_{i+1}$, $n_{i+1}>n_i$,\label{i_n}
	\item $S_l \cap ([k_i,k_{i+1})\times \omega) \subseteq T_i$ for each $l\in Q_{i+1}$,\label{m>n}
		\item $\frac{1}{2^j}|S_{n_i}(j)| < \frac{1}{3^{i+1}}$ for each $j>k_{i+1}$.\label{k_n}
\end{enumerate}

Let $n_0 = 0$, $k_0=k$ and $Q_0 = \omega$ and suppose that we have constructed $n_i$, $k_i$ and $Q_i$. Then use the fact that $S_{n_{i}}\in \mathcal{V}$ to pick an appropriate $k_{i+1}>k_i$ to satisfy condition (\ref{k_n}). Then apply the claim to $Q=Q_i$, $I = [k_i,
	k_{i+1})$ and $n=n_i$ to obtain $T_i = T$ and $Q_{i+1} = Q'$. Now, take $n_{i+1}\in Q_{i+1}$ such that $n_{i+1}>n_i$ and proceed.

	Now, let $N = \{n_i\colon i\in\omega\}$ and
	\[V = \bigcup \{S_{n} \colon n\in N\}. \]

	We are done once we show that $V\in \mathcal{S}$. For this, we only need to check that for every $j \in \omega$, $|V(j)| < 2^j$. This is clear if $j \in [0,k)$, and otherwise there is some $i$ such that $j \in [k_i, k_{i+1})$. Of course,
		\[ V(j) = \bigcup_{m < i} S_{n_m}(j) \cup \bigcup_{m\geq i} S_{n_m}(j). \]
		By (\ref{m>n}) and (\ref{i_n}) $S_{n_m}(j) \subseteq T_i(j)$ for every $m\geq i$. So, by (\ref{T_n}), $|\bigcup_{m\geq i} S_{n_m}(j)|<2^j \cdot \frac{1}{3}$. 
		By (\ref{k_n}) 
		\[ |\bigcup_{m<i} S_{n_m}(j)| < 2^j \cdot \sum_{m=0}^{i-1} \frac{1}{3^{m+1}} <  2^j \cdot \frac{2}{3}, \]
		and we are done.
\end{proof}

The reason for our interest in property (*) is that by \cite[Lemma 2D]{Fremlin-Maharam}, every \emph{Maharam algebra} (i.e., complete Boolean algebra with a strictly positive exhaustive submeasure) satisfies this chain condition. Maharam's Problem, asking if there is a Maharam
algebra which does not support a measure, was a longstanding open problem (see \cite{Maharam}) in measure theory. In \cite{Talagrand} Talagrand gave an example of such an algebra. Given the complexity of Talagrand's algebra, it is natural to search for simpler examples, for example one not appealing to the existence of a non-principal ultrafilter on $\omega$.

Of course the algebra $\mathfrak{T}_\mathcal{V}/Fin$ is not
complete but instead of $\mathfrak{T}_\mathcal{V}/Fin$ we can consider its completion $\mathfrak{A}$. The algebra $\mathfrak{A}$ does not support a measure (otherwise $\mathfrak{T}_\mathcal{V}/Fin$ would support a measure). Moreover, since $\mathfrak{T}_\mathcal{V}/Fin$ is a $\pi$-base of
$\mathfrak{A}$, the algebra $\mathfrak{A}$ is $\sigma$-$n$-linked for every $n$ and satisfies property (*). By \cite[Theorem 1]{Todorcevic04} every complete Boolean algebra which is $\sigma$-$n$-linked and weakly distributive is a Maharam algebra. In \cite{Dobrinen04}
Dobrinen proved that several complete $\sigma$-linked algebras which do not support a measure are not weakly distributive by showing that they add a Cohen real. Unfortunately, the algebra $\mathfrak{A}$ also adds a Cohen real, so it cannot be weakly
distributive

\begin{thm} Forcing with $\mathfrak{T}_\mathcal{V}/Fin$ adds a Cohen real.\label{Cohen}
\end{thm}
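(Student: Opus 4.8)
The plan is to exhibit a countable family of elements of $\mathfrak{T}_\mathcal{V}/\mathrm{Fin}$ which, as a forcing poset (or rather, a subposet), behaves like Cohen forcing, by producing a continuous map from the Stone space onto $2^\omega$ (equivalently, an embedding of the Cohen algebra into $\mathfrak{T}_\mathcal{V}/\mathrm{Fin}$ as a complete subalgebra, or at least a regular embedding of the Cohen poset). Since $\mathcal{V}$ is a rich family of slaloms with $\lim_n \frac{1}{2^n}|S(n)| = 0$, one can choose, for each $n$, a partition-like pair of pieces inside the fixed slalom $S$ used in the proof of Theorem \ref{growth}, and use the independence of $\{T_{S_\alpha}\}$ established in that proof. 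The point is that Cohen forcing is (a dense subset of) the forcing to add an independent real, so an explicit countably-generated independent subalgebra of $\mathfrak{T}_\mathcal{V}/\mathrm{Fin}$ regularly embeds the Cohen poset.

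Concretely, first I would fix $S \in \mathcal{V}$ with $|S(n)|\ge 2$ for $n>1$ (possible since $\mathcal{V}$ is to the density zero ideal what $\mathcal{W}$ is to the summable ideal, and one may take $S$ with, say, $|S(n)|$ growing like $n$, so $\frac{1}{2^n}|S(n)|\to 0$), and pick disjoint nonempty $Z^n_0, Z^n_1 \subseteq S(n)$. Then for $x \in 2^\omega$ define $S_x \in \mathcal{V}$ by $S_x(n) = Z^n_{x(n)}$ (and $S_x(0)=S_x(1)=\emptyset$); clearly $S_x \subseteq S$, so $S_x \in \mathcal{V}$. The map $x \mapsto T_{S_x}$ should, exactly as in the Claim proved inside Theorem \ref{growth}, yield for any finite $0$-$1$ pattern a slalom $T \subseteq S$ (hence $T\in\mathcal V$) witnessing that the corresponding Boolean combination of the $T_{S_x}$'s is infinite. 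This gives an independent family indexed by $2^{<\omega}$ in the appropriate sense; more precisely, one checks that the subalgebra of $\mathfrak{T}_\mathcal{V}/\mathrm{Fin}$ generated by $\{T_{\{(n,k)\colon n\in X, k\in Z^n_1\}\cup\{(n,k)\colon n\notin X, k\in Z^n_0\}} \colon X\subseteq\omega\}$ restricted to the clopen sets $\{[n\in\dot x]\}$ for the generic $\dot x$ behaves freely, so that the complete subalgebra it generates in the Stone space is the Cohen algebra of weight $\omega$.

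The cleanest route is probably this: define $\dot{c} \in 2^\omega$ in the Stone space of $\mathfrak{T}_\mathcal{V}/\mathrm{Fin}$ by declaring, for the generic ultrafilter $G$, that $\dot{c}(n) = 1$ iff $(T,m) \in G$ tends to have $Z^n_1 \subseteq T(n)$ — made precise using the generators $T_W$ for suitable $W$'s. Then show that for every nowhere dense (equivalently, every dense open) subset $D \subseteq 2^{<\omega}$ lying in the ground model, the set of conditions in $\mathfrak{T}_\mathcal{V}/\mathrm{Fin}$ forcing $\dot c$ to pass through $D$ is dense. That density is exactly the content of the independence-style argument: given any nonzero $T_W \cap (\text{finite stuff})$, using that $W \in \mathcal{V}$ so $W(n) \subsetneq 2^n$ with density tending to $0$, one can find $n$'s outside the "filled" levels of $W$ on which one is free to steer $\dot c(n)$ to either value, hence steer $\dot c$ to meet $D$; this freedom is genuinely infinite-dimensional because $\frac1{2^n}|W(n)|\to 0$ leaves asymptotically all of $2^n$ available. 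The main obstacle, and the step deserving the most care, is precisely this genericity/density argument — making sure that the map $x\mapsto T_{S_x}$ really gives a \emph{regular} (i.e.\ dense-preserving) embedding of the Cohen poset rather than just an embedding onto a non-regular subalgebra, so that the added real is genuinely Cohen-generic over $V$ and not merely a new real in $2^\omega$. Everything else (the finite-union closure of $\mathcal V$, that the relevant $T$'s land in $\mathcal V$, finiteness of the bad Boolean combinations) is a routine repetition of the bookkeeping already carried out in the proof of Theorem \ref{growth} and in the Claim proved there.
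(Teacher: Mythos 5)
There is a genuine gap, and it sits exactly where you flagged ``the step deserving the most care'': the real you propose to read off the generic is not Cohen-generic, and no density argument can rescue it as defined. Concretely, let $H$ be the generic slalom (the union of the $A$'s with $T_A$ in the generic filter). Your $\dot c$ is, in effect, $\dot c(n)=1$ iff $Z^n_1\subseteq H(n)$. The problem is that forcing $Z^n_1\subseteq H(n)$ is \emph{cheap}: for any $x\in 2^\omega$ the condition $T_{S_x}$ (with $S_x(n)=Z^n_{x(n)}$, which lies in $\mathcal V$) already forces $\dot c(n)=1$ for \emph{every} $n$ with $x(n)=1$. In particular, below $T_{S_{\bar 1}}$ (all ones) every extension forces $\dot c$ to be eventually constant $1$, so $\dot c$ avoids the dense open set of strings containing a later $0$; it cannot be Cohen over $V$. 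The same asymmetry kills the first variant of your plan: a countable independent family only gives an embedding of the free countable algebra, not a \emph{regular} embedding --- the measure algebra also contains countable independent families yet adds no Cohen real --- and your own candidate generic exhibits precisely the failure of regularity for the subalgebra generated by the $T_{S_x}$'s. So ``Cohen forcing is the forcing to add an independent real'' is not a usable characterization here.

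The missing idea, which is the heart of the paper's proof, is to read each bit off the level $H(n)$ via a function for which \emph{both} values are expensive to decide. The paper uses $d_n(F)=1$ iff $2^{n-1}\subseteq F$: to force $d_n(H(n))=1$ a condition $T_A\cap T_{(T,m)}$ must have $|A(n)|\geq 2^{n-1}$ (possible only at finitely many $n$, since $\tfrac{1}{2^n}|A(n)|\to 0$ for $A\in\mathcal V$), while to force $d_n(H(n))=0$ it must \emph{exclude} a point of $2^{n-1}$ from $H(n)$, which only the stem part $T_{(T,m)}$ can do, i.e.\ only for $n<m$. Hence every condition decides only finitely many bits, the map $p\mapsto\{(m,i): p\Vdash d_m(\dot H(m))=i\}$ lands in finite partial functions, and one checks it is a projection onto Cohen forcing (order-preserving, with every extension of the image realizable by extending the condition, using the freedom $\tfrac{1}{2^n}|A(n)|<\tfrac12$ leaves at large $n$). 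Your intuition that the density-$0$ condition ``leaves asymptotically all of $2^n$ available'' is the right raw material, but without a device like the $d_n$'s that makes both bit-values costly, the decided-bits map is not finite-to-finite and the projection (equivalently, the regularity of the embedding) fails. Note also that the paper works not with the generators $T_W$ alone but with the $\pi$-base $\{T_A\cap T_{(T,n)}\}$, precisely because the stems $T_{(T,n)}$ are what allow a condition to force points \emph{out} of $H$; your sketch never uses them, and without them no condition can ever force $\dot c(n)=0$ under any reasonable reading.
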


\begin{proof} Clearly, if we can prove that forcing with the dense subposet formed by a $\pi$-base of $\mathfrak{T}_\mathcal{V}/Fin$ adds a Cohen real, then the result follows. The $\pi$-base we choose is $\{T_A \cap T_{(T,n)}\colon A \in \mathcal{V}, (T,n)
\in \Omega)\}/Fin\setminus\{[\emptyset]\}$. We consider this set with the inclusion relation. 

It shall, however, be convenient to consider some canonical representatives of $[T_A\cap T_{(S,n)}]$, where $T_A\cap T_{(S,n)}$ is infinite. Given such $T_A\cap T_{(S,n)}$, let $m \geq n$ be the least natural number such that $|A(m)|<2^m-1$. Notice that $[T_A\cap T_{(S,n)}] =[T_A\cap T_{(T,m)}]$ where $T= A \cap (m\times 2^m)$. Furthermore, it is easy to verify that if $B \in \mathcal{V}$ and $(U,l) \in \Omega$ are such that $|B(l)|<2^l-1$ and $[T_A \cap T_{(T,m)}] = [T_B \cap T_{(U,l)}]$, then $A=B$, $m=l$ and $T=U$. It follows that the poset 
\[ \mathbb{Q} = \{T_A \cap T_{(T,n)}\colon T_A \cap T_{(T,n)}\not \in \mathrm{Fin}, A\in \mathcal{V}, (T,n)\in \Omega, |A(n)| < 2^n-1\},\] 
with the order given by $\subseteq^*$ is isomorphic to our $\pi$-base. The dense subset of $\mathbb{Q}$ we shall consider is the following:
\[\poset = \{T_A \cap T_{(T,n)}\in \mathbb{Q}\colon n >1\}.\]

For technical reasons, the particular incarnation, $\coh$, of Cohen forcing that we will work with is the poset which consists of finite partial functions from $\omega\setminus 2$ to $2$, with the order being reverse inclusion.

Now, in order to prove that forcing with $\poset$ adds a Cohen real, it suffices (see \cite{Abraham-Aronszajn}) to define a projection from $\poset$ to $\coh$, that is, a function $ \Phi :\poset \to \coh$ such that 
\begin{enumerate}
    \item $\Phi[\poset]$ is a dense subset of $\coh$,
    \item $\Phi$ is order-preserving,
    \item If $p \in \poset$ and $\tau \in \coh$ is such that $\tau\leq \Phi(p)$, then there is a $q \leq p$ in $\poset$ such that $\Phi(q) = \tau$.
\end{enumerate}

Let $G$ be the generic filter of $\cB$. Note that we can define from $G$ an element of $\mathcal{S}$ by noting that in $V[G]$ the set \[\bigcup\{A(n) \colon A \in \mathcal{V}, \exists (S,m) \in \Omega, m > n, T_A\cap T_{(S,m)} \in G\}\] has size less than $2^n$ for every $n \in \omega$. Let $\dot{H}$ be a name for this `generic slalom'. 

We shall now define for each natural number $n>1$ a function $d_n\colon \mathcal{P}(2^n) \to \{0,1\}$. For $n\in \omega \setminus 2$ let 
$$
d_n(F)=
\begin{cases}
	1 \mbox{  if } 2^{n-1} \subseteq F\\
	0 \mbox{  otherwise.}
\end{cases}
$$
Note that if $A \subseteq 2^n$ has size less than $2^{n-1}$, then there are $B, C \in [2^n]^{2^{n}-1}$ such that $A \subseteq B \cap C$ and $d_n(B) = 1- d_n(C)$. 

The crucial property of the functions $\{d_n\}_{n >1}$ is the following: given any $T_A\cap T_{(S,n)}\in \poset$, since $A \in \mathcal{V}$ the set 
\[F = \{m >1\colon\exists i \ T_A\cap T_{(S,n)}\Vdash``d_m(\dot{H}(m)) = i"\}\]
is finite. To see this notice that if $M \geq n$ is such that for every $m\geq M$, $\frac{1}{2^m}|A(m)| < \frac{1}{2}$, then $F \subseteq M$. Indeed, for any $m \geq M$ in $\omega\setminus 2$ and $i \in 2$, we can find $B_i\subseteq 2^m$ containing $A(m)$ such that $d_m(B_i) = i$. For $i\in 2$ consider the slalom $A_i \in
\mathcal{V}$ such that $A_i(m') = A(m')$ for every $m' \neq m$, and $A_i(m) = B$. Then $p_i = T_{A_i} \cap T_{(S,n)} \subseteq^* T_A \cap T_{(S,n)}$ and $p_i \Vdash ``d_m(\dot{G}(m))=i"$ for each $i\in 2$ (here we have not shown that the $p_i$ are in $\poset$, so what we mean is that any extension of the $p_i$ in $\poset$ forces the respective statements). 

To finish, consider the function $\Phi\colon \poset \to \coh$ given by 
\[\Phi(T_A \cap T_{(S,n)}) = \{(m,i)\colon T_A \cap T_{(S,n)}\Vdash ``d_m(\dot{H}(m)) = i"\}.\]

We will check that $\Phi$ satisfies the desired properties. First, let $(S,2)\in \Omega$ and $A \in \mathcal{V}$ be such that $T_A \cap T_{(S,n)}$ is infinite, $|A(m)|\frac{1}{2^m} < \frac{1}{2}$ for every $m \in \omega$. 
Then $T_A\cap T_{(S,2)}\in \poset$ and $\Phi(T_A\cap T_{(S,2)}) = \emptyset$. 

Second, let $T_A \cap T_{(S,n)} \in \poset$ be such that $\Phi(p)= \sigma$, and let $\tau \leq \sigma$. Notice that if $k \in [2, n)$, then $k \in \mathrm{dom}(\sigma)$, so if $k \in \mathrm{dom}(\tau)\setminus \mathrm{dom}(\sigma)$, then $k \geq n$.
	Now, let $B\in \mathcal{V}$ be given by $B(k)=A(k)$ for every $k \not \in \mathrm{dom}(\tau)\setminus\mathrm{dom}(\sigma)$, and otherwise $B(k)$ is such that $d_k(B(k)) = \tau(k)$. Let $m$ be the least such that $m \geq n$ and $m \not \in
	\mathrm{dom}(\tau)$. Note that this implies that $m \not \in \mathrm{dom}(\sigma)$, $B(m)= A(m)$, and also that $|A(m)| < 2^m-1$. Let $T$ be arbitrary such that $T \cap (n \times 2^n) = S$, $(T,m)\in \Omega$, and $T_B \cap T_{(T,m)}$ is infinite.
	Then $T_B \cap T_{(T,m)} \in \poset$, is below $T_A \cap T_{(S,n)}$, and $\Phi(T_B \cap T_{(T,m)})= \tau$. Note that, as there is $q\in \mathbb{Q}$ such that $\Phi(q)=\emptyset(=1_\mathbb{C}$), this also gives us that $\Phi$ is a surjection. 

That $\Phi$ is order-preserving is clear.
\end{proof}

\begin{remark}
	In \cite[Theorem 6A]{Fremlin-Maharam} it is shown (and attributed there to Todor\v{c}evi\'{c}) that the \emph{Gaifman algebra} of \cite{Gaifman} is an example of a $\sigma$-linked Boolean algebra with property (*) which does not support a
	measure (and Dobrinen has also shown in \cite{Dobrinen04} that this algebra adds a Cohen real). It contains an uncountable independent family, similarly to
	$\mathfrak{T}_\mathcal{V}/Fin$. Using techniques from Section \ref{destruct} we can show that consistently we can produce such an example without a big independent family. 
	
	Recall that 
	\[ \cof^*(\mathcal{I}) = \min \{|\mathcal{A}|\colon \mathcal{A}\subseteq \mathcal{I}, \ \forall I\in \mathcal{I} \ \exists A\in \mathcal{A} \ I\subseteq^* A\}. \]
	Notice that assuming $\mathrm{add}(\mathcal{N})=\mathrm{cof}^*(\mathcal{D})$ and using the technique of the proof of Theorem \ref{kunen-fremlin2} we can find an $\subseteq^*$-chain $\mathcal{A}\subseteq \mathcal{V}$ such that each element of $\mathcal{V}$
	is almost contained in an element of $\mathcal{A}$. Then we can argue as in Proposition \ref{nomeasure2} to show that the algebra $\mathfrak{T}_\mathcal{A}/Fin$ does not support a measure.

	Finally, in this way we can construct a Boolean algebra which does not contain an uncountable independent family, which does not support a measure, but which is $\sigma$-$n$-linked for each $n$ and has property (*). 
\end{remark}

Recall that for an ideal $\mathcal{I}$, we denote its dual filter by $\mathcal{I}^*$. That is, $\mathcal{I}^* = \{I^c\colon I\in \mathcal{I}\}$. 

\begin{remark}
	If we are interested rather in the completion of $\mathfrak{T}_\mathcal{V}/Fin$ than in $\mathfrak{T}_\mathcal{V}/Fin$ itself, then we could present it in a slightly different language. Recall that the Mathias forcing, $\mathbb{M}(\mathcal{F})$, for a filter $\mathcal{F}$ on $\omega$ consists of all pairs $(s,F)$ such that $s\subseteq \omega$ is a finite set, $F\subseteq (\max s, \infty)$ and $F\in \mathcal{F}$. The ordering is given by $(s',F') \leq (s,
	F)$ if 
	\begin{enumerate}
		\item $s\subseteq s'$,
		\item $F'\subseteq F$,
		\item $s'\setminus s \subseteq F$.
	\end{enumerate}
	The forcing $\mathbb{M}(\mathcal{F})$ is $\sigma$-centered and it adds generically a pseudointersection of $\mathcal{F}$. 

We will consider a sub-poset $\mathbb{P}(\mathcal{F})$ of $\mathbb{M}(\mathcal{F})$ by imposing one more restriction on the conditions:
\begin{enumerate}
	\item[(4)] for every $n \in \omega$, $(s\cup F) \cap [2^n,2^{n+1}) \ne \emptyset$.
\end{enumerate}


We will show that 
$\mathrm{RO}(\mathbb{P}(\mathcal{D}^*))$ is isomorphic to the completion of $\mathfrak{T}_\mathcal{V}/Fin$. The difference is simply one of viewpoint: whereas a generic for $\mathfrak{T}_\mathcal{V}/Fin$ defines a member of $\mathcal{S}$, a generic for $\mathbb{P}(\mathcal{D}^*)$ will define a subset of $\mathcal{X}$ whose complement is a member of $\mathcal{S}$.



It is enough to find a poset isomorphic to a $\pi$-base of $\mathfrak{T}_\mathcal{V}/Fin$ which is order-isomorphic to a dense subset of $\poset(\mathcal{D}^*)$.

By the same argument as in Theorem~\ref{Cohen}, the set 
\[ \mathbb{Q} = \{T_A \cap T_{(T,n)}\colon T_A \cap T_{(T,n)}\not \in \mathrm{Fin}, A\in \mathcal{V}, (T,n)\in \Omega, |A(n)| < 2^n-1\},\]
under the order given by the reverse almost-inclusion is isomorphic to a $\pi$-base of $\mathfrak{T}_\mathcal{V}/Fin$. The advantage again being that if $A\in \mathcal{V}$ and $(S,n)\in \Omega$ witness that $T_A \cap T_{(S,n)} \in \mathbb{Q}$, then they are uniquely determined by $[T_A \cap T_{(S,n)}]$.

The order-isomorphism $\varphi\colon \mathbb{Q} \to \mathbb{P}(\mathcal{D}^*)$ is induced by the natural enumeration $f$ of $\{(n,i)\colon i< 2^n\}$ sending $\{n\}\times 2^n$ to $[2^n,2^{n+1})$ for each $n$ in the following way: for $A\in \mathcal{V}$ and $(S,n)\in \Omega$ witnessing that $T_A \cap T_{(S,n)} \in \mathbb{Q}$, let
	\[ \varphi(T_A\cap T_{(S,n)}) = (2^n \setminus f[S],f[A]^c \setminus 2^n). \] 
	
To see that $\varphi$ is an order embedding, that the range of the embedding is 
\[\{(s,F) \in \mathbb{P}(\mathcal{D}^*) \colon \exists n \in \omega, s\subseteq 2^n, F \subseteq [2^n, \infty), |F \cap [2^n, 2^{(n+1)})| >1\},\]
and that this range is actually a dense subset of $\mathbb{P}(\mathcal{D}^*)$, we use that if $T_A \cap T_{(S,n)} \subseteq^* T_B \cap T_{(T,m)}$ are distinct elements of $\mathbb{Q}$, then 
\begin{enumerate}
    \item $n \geq m$,
    \item $A \supseteq B$,
    \item $S \cap (m \times 2^m) = T$,
    \item $S \cap ([m,n)\times 2^n) \supseteq A \cap ([m,n)\times 2^n) \supseteq B \cap ([m,n)\times 2^n)$.
\end{enumerate}

	So,  we have that $\mathbb{P}(\mathcal{D}^*)$ is an example of a complete Boolean algebra adding a Cohen real which is not $\sigma$-centered and which does not support a measure, but is
$\sigma$-$n$-linked for each $n$ and satisfies property (*).  Analogously,
$\mathbb{P}(\mathcal{I}_{1/n}^*)$ is a complete Boolean algebra supporting a measure which is not $\sigma$-centered and which adds a Cohen real (the last statement can be proved in the same way as Theorem \ref{Cohen}). 

Perhaps using other ideals one can construct in this way other interesting Boolean algebras. 
\end{remark}

\section{Acknowledgements} 

This research was done whilst the authors were visiting fellows at the Isaac Newton Institute for Mathematical Sciences, Cambridge, in the
programme `Mathematical, Foundational and Computational Aspects of the Higher Infinite' (HIF). We would like to thank Tomek Bartoszy\'{n}ski, Andreas Blass, Mirna D{\v z}amonja and Osvaldo Guzman for their helpful remarks concerning the subject of this paper.

\bibliographystyle{alpha}
\bibliography{smallb}

\end{document}